\DeclareMathAlphabet{\mathpzc}{OT1}{pzc}{m}{it}
\newcommand{\be}{\begin{equation}}
\newcommand{\ee}{\end{equation}}
\newcommand{\R}{\mathbb{R}}
\newcommand{\Mag}{\mathcal{M}^\alpha(\gamma)}
\newcommand{\Manp}{\mathcal{M}^\alpha_n(p_n)}
\newcommand{\FRAC}[2]{\leavevmode\kern.1em\raise.5ex\hbox{\the\scriptfont0 #1} }
\theoremstyle{definition}
\newtheorem{df}{Definition}[section]
\newtheorem{rem}{Remark}[section]
\newtheorem*{syb*}{Symbol}
\theoremstyle{plain}
\newtheorem{thm}{Theorem}[section]
\newtheorem{prop}{Proposition}[section]
\newtheorem{lem}{Lemma}[section]
\newtheorem{cor}{Corollary}[section]
\title{A discretization of O'Hara's knot energy and its convergence}
\author{Shoya Kawakami}
\date{\today}
\begin{document}

\maketitle

%
%

\begin{abstract}

In this paper, we propose a discrete version of O'Hara's knot energy defined on polygons embedded in the Euclid space.
It is shown that values of the discrete energy of polygons inscribing the curve which has bounded O'Hara's energy converge to the value of O'Hara's energy of its curve.
Also, it is proved that the discrete energy converges to O'Hara's energy in the sense of $\Gamma$-convergence.
Since $\Gamma$-convergence relates to minimizers of a functional and discrete functionals, we need to investigate the minimality of the discrete energy.

\end{abstract}

%
%

\section{Introduction}

Let
$ \gamma : \mathbb{S}_L \to \mathbb{R}^d $
be a closed curve in
$ \mathbb{R}^d $
for $ L>0 $ and $ d \geq 2$,
where
$\mathbb{S}_L$
is a circle with length
$L$.
The curve
$ \gamma $
is said to be a \textit{knot} when it is embedded in
$ \mathbb{R}^3 $.
For
$\alpha$,
$q \in (0,\infty)$,
O'Hara's knot energies of $ \gamma $
are denoted by $ \mathcal{E}^{\alpha ,q} (\gamma) $
and are defined by
\[
	\mathcal{E}^{\alpha ,q} (\gamma)
	:=
	\frac 1 \alpha
	L^{\alpha q-2}
	\int_{\mathbb{S}_L} \int_{\mathbb{S}_L}
	\left(
	\mathcal{M}^\alpha (\gamma)
	\right)^q
	dsdt,
\]
where
\[
	\mathcal{M}^\alpha (\gamma)
	=
	\mathcal{M}^\alpha (\gamma)(s,t)
	:=
	\frac{1}{ | \gamma(t) - \gamma(s) |^\alpha }
	-
	\frac{1}{ D( \gamma(s) , \gamma(t) )^\alpha },
\]
and
$ D( \gamma(s) , \gamma(t) ) $
is the intrinsic distance between
$ \gamma(s) $
and
$ \gamma(t) $.
Note that the coefficient
$L^{\alpha q-2}$
ensures O'Hara's energies are scale invariant.
These energies were introduced by J. O'Hara \cite{O91} to give an answer to the question,
``What is the most beautiful knot in a given knot class ?''.
Therefore, O'Hara's energies were constructed so that as the knot becomes more well-balanced,
the value of the energy decreases.
Also, when we deform a knot, it is not desirable that the knot class to which the knot belongs changes.
Thus, these energies were also constructed so that divergence occurs if a knot has self-intersection.

However, it is difficult to calculate values of O'Hara's energies directly,
and as a result,
it is not easy to evaluate well-balancedness.
Therefore,
it is desirable to numerically calculate these energies.
A discretization of O'Hara's energy with
$ \alpha = 2 $,
$q=1 $
was proposed by Kim-Kusner \cite{KK}.
Let
$ p_n : \mathbb{S}_L \to \mathbb{R}^d $
be a polygon with
$n$
edges parametrized by arc-length and embedded in
$ \mathbb{R}^d $
with length
$L$.
Let
$ a_i $
be the value of the arc-length parameter at the $i$-th vertex of
$p_n$,
and note that
$ p_n $ is made by connecting
$ \{ p_n(a_i) \} $
in turn.
Then, the polygonal discrete energy,
denoted by
$ \mathcal{E}_n (p_n) $,
is defined by
\[
	\mathcal{E}_n (p_n)
	:=
	\frac 1 2
	\sum_{\substack{i,j=1\\i\ne j}}^n
	\mathcal{M}_n (p_n)
	| p_n(a_{i+1}) - p_n(a_i) |
	| p_n(a_{j+1}) - p_n(a_j) |,
\]
where
\[
	\mathcal{M}_n (p_n)
	:=
	\frac{1}{ | p_n(a_j) - p_n(a_i) |^2 }
	-
	\frac{1}{ D( p_n(a_j) , p_n(a_i) )^2 }.
\]
Using this discrete energy,
Kim-Kusner \cite{KK} calculated values of O'Hara's energy with
$ \alpha = 2 $, $q=1$
of torus knots by numerical experiments.

Scholtes \cite{Sch} addressed to what extent
$ \mathcal{E}_n $
approximates
$ \mathcal{E}^{2,1} $.
For a closed curve
$ \gamma : \mathbb{S}_L \to \mathbb{R}^d $,
\textit{inscribed polygons} in
$ \gamma $
were considered.
Let
$ p_n $
be an inscribed polygon,
and suppose the vertices correspond to parameters
$ b_j \in \mathbb{S}_L $;
that is, $p_n$ is made by connecting
$ \{ \gamma (b_j) \} $
in turn.
It was shown that if
$ \gamma $
belongs to
$ C^{1,1} (\mathbb{S}_L , \mathbb{R}^d) $
and that there exists
$ c,\bar{c} >0 $
such that
\[
	\frac c n \leq \min_{k=1,\ldots n} | \gamma(b_{k+1}) - \gamma(b_k) |
	\leq \max_{k=1,\ldots n} | \gamma(b_{k+1}) - \gamma(b_k) |
	\leq \frac{\bar{c}}{n},
\]
then it holds that for all
$\varepsilon >0$,
there exists
$ C_\varepsilon >0 $
depending on
$ \gamma $,
$c$,
and
$\bar{c}$
such that
\[
	| \mathcal{E}^{2,1} (\gamma) - \mathcal{E}_n (p_n) |
	\leq C_\varepsilon \frac 1 {n^{ 1- \varepsilon }}.
\]
Also, it was shown that if
$ \gamma \in C^{0,1} (\mathbb{S}_L , \mathbb{R}^d) $
and
$ \mathcal{E}^{2,1} (\gamma) < \infty $,
then it holds that
\[
	\lim_{n \to \infty} \mathcal{E}_n (p_n) = \mathcal{E}^{2,1} (\gamma).
\]
In addition, the idea of \textit{$\Gamma$-convergence} was used in \cite{Sch}.
$\Gamma$-convergence was introduced by De Giorgi and is one type of convergence of a sequence of functionals which is very useful when we study the convergence of the sequence of minimal values of each functional to those to the limit functional.
In \cite{Sch}, it was shown that
$ \mathcal{E}_n $
converges to
$ \mathcal{E}^{2,1} $
in the sense of $\Gamma$-convergence on metric spaces.
Here, these metric spaces contain
$C^1$
curves and equilateral polygons with length
$1$
belonging to a given tame knot class equipped with the metric induced by
$L^r$-norm and
$W^{1,r}$-norm with
$ r \in [1, \infty] $.
Using this,
it was shown that
minimal values of
$ \mathcal{E}_n $
converge to the minimal value of
$ \mathcal{E}^{2,1} $.
Moreover,
it was shown that minimizers of
$ \mathcal{E}_n $
in the set of equilateral polygons
are regular polygons
and that the minimizers are unique except congruent transformations and similar transformations.

$\mathcal{E}^{2,1}$
is called the M\"{o}bius energy,
since this energy is invariant under M\"{o}bius transformations.
Scholtes did not use this property for proving his result,
and thus it is natural to believe that this argument may be applicable to all of O'Hara's energies;
we prove this here.
More precisely, in this article,
we propose a discretization of
$(\alpha ,q)$-O'Hara energies by using the idea of \cite{Sch},
and we discuss approximation of the discrete energies to O'Hara energies and the $\Gamma$-convergence.

\begin{df}[A discretization of $(\alpha ,q)$-O'Hara energies]
Let
$\alpha\ ,q \in (0, \infty)$,
and let
$ p_n : \mathbb{S}_L \to \mathbb{R}^d $
be a polygon parametrized by arc-length with
$n$
vertices whose total length is
$L>0$.
Let
$a_j$
be the value of arc-length parameters corresponding to its vertices and assume
\[
	0 \leq a_1 < a_2 < \cdots < a_n < L\ \ (\operatorname{mod}\,L).
\]
Then, we define
$\mathcal{E}_n^{\alpha ,q} (p_n)$
by
\[
	\mathcal{E}_n^{\alpha ,q}(p_n)
	:=
	\frac 1 \alpha
	L^{\alpha q-2}
	\sum_{\substack{i,j=1\\i\ne j}}^n
	( \mathcal{M}^\alpha_n (p_n) )^q
	| p_n(a_{i+1}) - p_n(a_i) |
	| p_n(a_{j+1}) - p_n(a_j) |,
\]
where
\[
	\mathcal{M}^\alpha_n (p_n)
	=
	\mathcal{M}^\alpha_n (p_n)(a_i,a_j)
	:=
	\frac{1}{ | p_n(a_j) - p_n(a_i) |^\alpha }
	-
	\frac{1}{ D( p_n(a_j) , p_n(a_i) )^\alpha }.
\]
\end{df}

Our main theorems are as follows.

\begin{thm}[cf.\ Theorems \ref{orderinscribed} and \ref{inscribed}]
Assume that
$ \alpha \in ( 0 , \infty ) $
and
$ q \in [1, \infty ) $
satisfy
$ 2 \leq \alpha q < 2q+1 $,
and set
$\displaystyle \sigma := \frac{\alpha q-1}{2q}$.
\begin{enumerate}
\item\label{1}
Let
$ \gamma \in C^{ 1,1 } ( \mathbb{S}_L , \mathbb{R}^d ) $
be a curve parametrized by arc-length embedded in
$ \mathbb{R}^d $,
where
$L$
is the length of
$\gamma$.
Let
$ c $, $ \bar{c} > 0 $,
and set
$
	K := 
	\| \gamma^{ \prime \prime } \|
	_{ L^\infty ( \mathbb{S}_L , \mathbb{R}^d ) }
$.
In addition,
for
$ n \in \mathbb{N} $,
let
$ \{ b_k \}_{ k=1 }^n $
be a division of
$ \mathbb{S}_L $
satisfying
\begin{equation*}
	\frac{ c L }{ n }
	\leq
	\min_{k=1, \ldots ,n}
	| \gamma ( b_{ k+1 } ) - \gamma( b_k ) |
	\leq
	\max_{k=1, \ldots ,n}
	| \gamma ( b_{ k+1 } ) - \gamma( b_k ) |
	\leq
	\frac{ \bar{c} L }{ n },
	\label{div.bk}
\end{equation*}
and let $p_n$ be the inscribed polygon in $ \gamma $ with vertices
$ \gamma(b_1) , \ldots , \gamma(b_n) $,
where
we extend the notation
$ \gamma (b_k) $
to all
$ k \in \mathbb{Z} $
in the natural way via congruency modulo
$n$;
i.e.,
$\gamma(b_0) = \gamma(b_n)$,
$\gamma(b_1) = \gamma(b_{n+1})$,
and so on.
Then,
if the number $n$ of points of the division is sufficiently large,
there exists
$ C > 0 $
depending on 
$ c $, $ \bar{c} $, $ \mathcal{E}^{\alpha,q} (\gamma) $
such that
\[
	| \mathcal{E}^{ \alpha , q } ( \gamma ) 
	- \mathcal{E}_n^{ \alpha , q } ( p_n )|
	\leq
	C
	\{
	( LK )^{2q}
	+
	( LK )^{2q+2}
	\}
	\cfrac{1}{ n^{2q- \alpha q+1} }.
\]
Furthermore, if $\alpha \leq 2$,
then there exists $C>0$ depending on $ c $, $ \bar{c} $, $ \mathcal{E}^{\alpha,q} (\gamma) $
such that
\[
	| \mathcal{E}^{ \alpha , q } ( \gamma ) 
	- \mathcal{E}_n^{ \alpha , q } ( p_n )|
	\leq
	C
	\{
	( LK )^{ \alpha q }
	+
	( LK )^{ \alpha q - \alpha + 2 }
	+
	( LK )^{ \alpha q + 2 }
	\}
	\cfrac{ \log n }{ n }.
\]
\item
Let
$ \gamma \in W^{ 1+\sigma , 2q } (\mathbb{S}_L,\mathbb{R}^d) $,
and let $ p_n $ be the inscribed polygon as in 1.
Then, we have
\[
	\lim_{ n \rightarrow \infty }
	\mathcal{E}_n^{\alpha ,q} ( p_n )
	=
	\mathcal{E}^{\alpha ,q} ( \gamma ).
\]
\end{enumerate}
\end{thm}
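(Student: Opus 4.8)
The plan is to prove both statements by realizing the discrete energy as a quadrature rule for the double integral defining $\mathcal{E}^{\alpha,q}(\gamma)$ on the grid $Q_{ij} := [b_i,b_{i+1}] \times [b_j,b_{j+1}]$ of $\mathbb{S}_L \times \mathbb{S}_L$ and estimating the difference cell by cell. First I would record two elementary $C^{1,1}$ facts for the arc-length curve with $K = \|\gamma''\|_{L^\infty(\mathbb{S}_L,\mathbb{R}^d)}$: the chord--arc comparison $0 \le |t-s| - |\gamma(t)-\gamma(s)| \lesssim K^2|t-s|^3$, and the induced edge-length estimate $0 \le (b_{k+1}-b_k) - \ell_k \lesssim K^2(b_{k+1}-b_k)^3$ with $\ell_k := |\gamma(b_{k+1})-\gamma(b_k)|$; the two-sided hypothesis on the division then forces $b_{k+1}-b_k \sim L/n$ uniformly. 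From the chord--arc bound and the mean value theorem applied to $x \mapsto x^{-\alpha}$ I would derive the crucial near-diagonal bound $0 \le \mathcal{M}^\alpha(\gamma)(s,t) \lesssim K^2\,D(\gamma(s),\gamma(t))^{2-\alpha}$, whose $q$-th power is integrable across the diagonal precisely because $\alpha q < 2q+1$.

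With these in hand I would split the index set into the diagonal $i=j$, the nearest neighbours $|i-j| \equiv 1 \pmod n$, and the remaining ``far'' pairs. The nearest-neighbour terms of the discrete sum vanish identically, since along a single edge the polygonal chord and the polygonal intrinsic distance coincide, so $\mathcal{M}^\alpha_n = 0$ there; hence the entire near-diagonal discrepancy reduces to the integral of $(\mathcal{M}^\alpha(\gamma))^q$ over the cells with $|i-j|\le 1$, which the near-diagonal bound controls by $\sum_i K^{2q}(L/n)^{(2-\alpha)q}(L/n)^2 \sim K^{2q} L^{(2-\alpha)q+2}\,n^{-(2q-\alpha q+1)}$. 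After the scale factor $L^{\alpha q-2}/\alpha$ this is exactly the leading $(LK)^{2q}\,n^{-(2q-\alpha q+1)}$ term. On the far cells the integrand is smooth, and there the error splits into (a) the quadrature error replacing $(\mathcal{M}^\alpha(\gamma))^q$ by its corner value, (b) the mismatch between the curve's intrinsic distance $|b_j-b_i|$ and the polygon's $\sum_k \ell_k$, and (c) the mismatch between $|I_i|\,|I_j|$ and $\ell_i\ell_j$; each of (a)--(c) carries an extra factor $K^2(L/n)^2$ relative to the size of the integrand, and summing these against the integrable near-diagonal profile yields the higher-order $(LK)^{2q+2}\,n^{-(2q-\alpha q+1)}$ contribution.

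For the refined estimate when $\alpha \le 2$, I would exploit that $2-\alpha \ge 0$ makes $\mathcal{M}^\alpha$ bounded up to the diagonal, so rather than merely bounding the diagonal cells I would expand to leading order, $\mathcal{M}^\alpha(\gamma)(s,t) \approx c_\alpha\,|\gamma''(s)|^{2}\,D(\gamma(s),\gamma(t))^{2-\alpha}$, and compare the Riemann sum of this profile with its integral; the borderline integrability of the first-order remainder is what converts the geometric series in $1/n$ into the logarithmic factor, and tracking the powers of $\gamma''$ through the expansion produces the constants $(LK)^{\alpha q}+(LK)^{\alpha q-\alpha+2}+(LK)^{\alpha q+2}$. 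For Part~2, with $\gamma \in W^{1+\sigma,2q}(\mathbb{S}_L,\mathbb{R}^d)$ only, I would run a three-term approximation: choose smooth $\gamma_\varepsilon$ with $\gamma_\varepsilon \to \gamma$ in $W^{1+\sigma,2q}$ (the space in which $\mathcal{E}^{\alpha,q}$ is finite and continuous, by the fractional-Sobolev characterization of the energy, for which $\sigma = (\alpha q-1)/(2q)$ is the critical smoothness), apply Part~1 to each $\gamma_\varepsilon$ to obtain $\mathcal{E}_n^{\alpha,q}(p_n^\varepsilon) \to \mathcal{E}^{\alpha,q}(\gamma_\varepsilon)$, and close the estimate by controlling $|\mathcal{E}^{\alpha,q}(\gamma)-\mathcal{E}^{\alpha,q}(\gamma_\varepsilon)|$ together with $\sup_n|\mathcal{E}_n^{\alpha,q}(p_n)-\mathcal{E}_n^{\alpha,q}(p_n^\varepsilon)|$ by the $W^{1+\sigma,2q}$-distance.

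The hard part throughout is the uniform control near the diagonal. In Part~1 I must keep the denominators from degenerating, i.e.\ secure a bi-Lipschitz lower bound $|\gamma(t)-\gamma(s)| \gtrsim |t-s|$ with a constant depending only on $\mathcal{E}^{\alpha,q}(\gamma)$ (and on $c,\bar c$), so that the finitely many near-diagonal cells cannot conspire into an uncontrolled singular contribution; this is precisely where the finiteness of the energy is used. In Part~2 the genuinely delicate step is the last one, namely proving a discrete Gagliardo-type inequality that bounds differences of $\mathcal{E}_n^{\alpha,q}$ by the fractional $W^{1+\sigma,2q}$-seminorm of the defining curves uniformly in $n$, since without this equicontinuity the three-term estimate does not close.
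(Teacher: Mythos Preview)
For Part~1 your outline matches the paper's proof closely: the same near/far split, the same chord--arc input $|t-s|^\alpha-|\gamma(t)-\gamma(s)|^\alpha\lesssim K^2|t-s|^{\alpha+2}$ driving the near-diagonal bound, and the same decomposition of the far-cell error into quadrature error, intrinsic-distance mismatch $|a_j-a_i|$ versus $|b_j-b_i|$, and edge-weight mismatch (the paper labels these $A_{i,j},B_{i,j},C_{i,j},D_{i,j}$ and treats them in separate propositions; it also uses a fixed threshold $N=4C_b\bar c/c$ rather than $|i-j|\le 1$, but only so that $|t-s|$ and $|b_j-b_i|$ are uniformly comparable on far cells). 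One substantive difference: for $\alpha\le 2$ the paper does \emph{not} expand $\mathcal{M}^\alpha$ to leading order. It uses instead the sharper pointwise bound $|t-s|^\alpha-|\gamma(t)-\gamma(s)|^\alpha\le 2^{-\alpha/2}K^\alpha|t-s|^{2\alpha}$ (from $1-x^\alpha\le(1-x^2)^{\alpha/2}$ for $0\le x\le 1$), which makes $\mathcal{M}^\alpha$ uniformly bounded; rerunning the $A$--$D$ estimates with this bound converts the far sums into expressions like $\sum_{|j-i|>N}|b_j-b_i|^{-1}\sim\log n$. Your ``leading-order expansion plus Riemann-sum comparison'' is vaguer, and it is not clear it reproduces the specific constants $(LK)^{\alpha q}$, $(LK)^{\alpha q-\alpha+2}$, $(LK)^{\alpha q+2}$, which in the paper come from three distinct places (the near-diagonal piece, the $A$-term under the new bound, and the length correction $L-\tilde L_n$).

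Part~2 is where there is a genuine gap. Your three-term argument hinges on the equicontinuity $\sup_n|\mathcal{E}_n^{\alpha,q}(p_n)-\mathcal{E}_n^{\alpha,q}(p_n^\varepsilon)|\to 0$ as $\varepsilon\to 0$, and you yourself flag this ``discrete Gagliardo-type inequality'' as the delicate step; but you give no argument for it, and it is far from obvious---the near-diagonal summands in $\mathcal{E}_n^{\alpha,q}$ individually carry a factor $n^{\alpha q}$, so one needs cancellation that is uniform in $n$ and controlled by a fractional norm of the underlying curves. The paper avoids this entirely by working \emph{directly} with $\gamma\in W^{1+\sigma,2q}$. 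It sets
\[
\varepsilon_n:=L^{\alpha q-2}\sum_{k=1}^n\int_{b_k}^{b_{k+1}}\!\!\int_{b_k}^{b_{k+1}}\frac{|\gamma'(v)-\gamma'(u)|^{2q}}{|v-u|^{\alpha q}}\,du\,dv\ \longrightarrow\ 0
\]
by absolute continuity of the Gagliardo integral (note $\alpha q=1+2\sigma q$), and replaces the fixed threshold by the $n$-dependent one $N_n:=n\max\{\varepsilon_n^{1/(4q)},\,n^{-1/(6q)}\}$. The near part $|j-i|\le N_n$ then tends to $0$ again by absolute continuity, since its total area is $O(N_n/n)\to 0$; on the far part one replaces the $C^{1,1}$ chord--arc estimate by its Sobolev analogue
\[
|t-s|-|\gamma(t)-\gamma(s)|\le\tfrac12\,|t-s|^{\alpha+1-2/q}\Bigl(\int_s^t\!\!\int_s^t\frac{|\gamma'(v)-\gamma'(u)|^{2q}}{|v-u|^{\alpha q}}\,du\,dv\Bigr)^{1/q},
\]
and combines it with the lower bound $|\gamma(b_j)-\gamma(b_i)|\gtrsim L N_n/n$ to drive each of $A_{i,j},B_{i,j},C_{i,j},D_{i,j}$ to $0$. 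No smooth approximation of $\gamma$ is needed, and the bi-Lipschitz constant $C_b$ (depending only on $\mathcal{E}^{\alpha,q}(\gamma)$) is the only place the finiteness of the energy enters, exactly as you anticipated for Part~1.
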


\begin{thm}[cf. Theorem \ref{gamma}]
For
$\alpha \in (0,\infty)$,\ $ q \in [1,\infty) $
satisfying
$ 2 \leq \alpha q < 2q+1 $,
$\mathcal{E}_n^{\alpha ,q}$
converges to
$\mathcal{E}^{\alpha ,q}$
in the sense of
$\Gamma$-convergence on a metric space $X$.
\end{thm}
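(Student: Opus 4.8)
The plan is to verify the two conditions defining $\Gamma$-convergence on $X$. I extend each $\mathcal{E}_n^{\alpha,q}$ by $+\infty$ off the set of equilateral $n$-gons and $\mathcal{E}^{\alpha,q}$ by $+\infty$ off the admissible curves, so that both functionals are defined on all of $X$. I must then prove (i) the \emph{liminf inequality}, namely $\mathcal{E}^{\alpha,q}(\gamma)\le\liminf_{n\to\infty}\mathcal{E}_n^{\alpha,q}(p_n)$ for every $\gamma\in X$ and every $p_n\to\gamma$ in $X$, and (ii) the existence of a \emph{recovery sequence}, i.e.\ for each $\gamma\in X$ some $p_n\to\gamma$ in $X$ with $\limsup_{n\to\infty}\mathcal{E}_n^{\alpha,q}(p_n)\le\mathcal{E}^{\alpha,q}(\gamma)$.

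For (ii) I would take $p_n$ to be an equilateral polygon inscribed in $\gamma$; such polygons exist for every $n$ and, since the curve-elements of $X$ are $C^1$, their edge directions converge uniformly to $\gamma'$, so $p_n\to\gamma$ in the $X$-metric. If $\mathcal{E}^{\alpha,q}(\gamma)=\infty$ the bound is vacuous, so only $\mathcal{E}^{\alpha,q}(\gamma)<\infty$ needs work. In that case the characterization of the energy space of $\mathcal{E}^{\alpha,q}$ forces $\gamma\in W^{1+\sigma,2q}$, with $\sigma=(\alpha q-1)/(2q)$ exactly as above, placing $\gamma$ in the regularity class of Theorem~\ref{inscribed}. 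Because $\gamma$ is bi-Lipschitz, its equilateral inscribed polygons have common edge length comparable to $L/n$, so the spacing hypothesis of Theorem~\ref{inscribed} holds for suitable $c,\bar c$; that theorem then yields $\mathcal{E}_n^{\alpha,q}(p_n)\to\mathcal{E}^{\alpha,q}(\gamma)$, which is even stronger than the required limsup bound.

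For (i) the key is to rewrite the discrete energy as an integral. For an arc-length equilateral $n$-gon each edge has length $L/n$ and each parameter cell $I_i=[a_i,a_{i+1}]$ has length $L/n$, so
\[
	\mathcal{E}_n^{\alpha,q}(p_n)
	=\frac1\alpha L^{\alpha q-2}\iint_{\bigcup_{i\ne j}I_i\times I_j} g_n(s,t)\,ds\,dt,
	\qquad
	g_n:=(\Manp(a_i,a_j))^q \ \text{on } I_i\times I_j .
\]
Since $p_n$ is parametrized by arc length, the intrinsic-distance term equals the circle distance $D(p_n(a_j),p_n(a_i))=d_{\mathbb{S}_L}(a_i,a_j)$, independent of the embedding, and likewise for $\gamma$; hence off the diagonal only the chord term must be controlled. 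For a.e.\ $(s,t)$ with $s\ne t$, the cells shrink as $n\to\infty$, so $a_i\to s$, $a_j\to t$, and the $W^{1,r}$ control in the $X$-metric gives uniform convergence $p_n\to\gamma$, whence $|p_n(a_j)-p_n(a_i)|\to|\gamma(t)-\gamma(s)|$ and $g_n\to(\Mag(s,t))^q$ a.e. Because $|p_n(a_j)-p_n(a_i)|\le D(p_n(a_j),p_n(a_i))$ forces $g_n\ge0$, Fatou's lemma yields
\[
	\liminf_{n\to\infty}\mathcal{E}_n^{\alpha,q}(p_n)
	\ge\frac1\alpha L^{\alpha q-2}\iint (\Mag)^q\,ds\,dt
	=\mathcal{E}^{\alpha,q}(\gamma),
\]
the excluded diagonal cells having total measure $L^2/n\to0$ and hence being harmless.

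The main obstacle is the liminf step rather than the recovery sequence: the approximation result of Theorem~\ref{inscribed} applies only to inscribed polygons, whereas here the $p_n$ form an arbitrary sequence of equilateral $n$-gons converging to $\gamma$, so the convergence of energies must be re-derived from scratch via the Fatou argument above. The delicate points are passing from $X$-convergence to the a.e.\ convergence of the vertex chord lengths $|p_n(a_j)-p_n(a_i)|$, and controlling the behaviour near the diagonal, where the integrand is singular; the sign condition $\mathcal{M}^\alpha(\gamma)\ge0$ is precisely what keeps Fatou applicable there. A secondary point is to confirm that the limit of the equilateral polygons genuinely lies in the admissible class built into $X$, so that the knot type is preserved and $\gamma$ is embedded — this is ensured by the choice of the $W^{1,r}$-metric defining $X$.
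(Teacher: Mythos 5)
Your overall architecture coincides with the paper's: extend both functionals by $+\infty$, prove the liminf inequality by rewriting $\mathcal{E}_n^{\alpha,q}(p_n)$ as the integral of a nonnegative piecewise-constant function over the off-diagonal cells and applying Fatou, and build the recovery sequence from inscribed equilateral polygons, reducing via Blatt's characterization $\mathcal{E}^{\alpha,q}(\gamma)<\infty \Leftrightarrow \gamma\in W^{1+\sigma,2q}$ to the convergence theorem for inscribed polygons. The recovery half is essentially the paper's Theorem \ref{limsup} (the paper defers the construction to Scholtes's method, but the content is the same; note that you are implicitly using the nontrivial fact that an inscribed equilateral polygon of a $C^1$ knot eventually lies in the same knot class $\mathcal{K}$, which is needed for $p_n\in\mathcal{P}_n(\mathcal{K})$ so that the extended $\mathcal{E}_n^{\alpha,q}$ is not $+\infty$).

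There is, however, one step in your liminf argument that fails as written: you assert that ``the $W^{1,r}$ control in the $X$-metric gives uniform convergence $p_n\to\gamma$.'' The metric $d_X$ is only assumed to satisfy $C_1 d_{L^1}\le d_X\le C_2 d_{W^{1,\infty}}$, so convergence in $d_X$ is bounded \emph{below} by $L^1$-convergence and gives no control whatsoever in $W^{1,r}$ or in $L^\infty$; the upper bound by $d_{W^{1,\infty}}$ goes the wrong way. Consequently you cannot conclude $|p_n(a_j)-p_n(a_i)|\to|\gamma(t)-\gamma(s)|$ directly. The repair is exactly what the paper does in Theorem \ref{liminf}: pass first to a subsequence along which the liminf is attained, then to a further subsequence converging a.e.\ on $\mathbb{S}_1$ (available because $\|p_n-\gamma\|_{L^1}\le C_1^{-1}d_X(p_n,\gamma)\to 0$), restrict $(s,t)$ to points of a.e.\ convergence, and use that the arc-length parametrized polygons are uniformly $1$-Lipschitz so that $|p_n(a_i^{(n)})-p_n(s)|\le|a_i^{(n)}-s|\le 1/n\to 0$; this upgrades the pointwise convergence at $s$ to convergence of the vertex values $p_n(a_i^{(n)})$. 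With that substitution your Fatou argument (whose applicability indeed rests on $\mathcal{M}^\alpha_n(p_n)\ge 0$, as you note) goes through and the proof matches the paper's.
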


\begin{rem}
\begin{enumerate}
\item
$W^{1+\sigma , 2q}$
is called the \textit{Sobolev-Slobodeckij space},
and it was used in \cite{Bla} to give a necessary and sufficient condition that O'Hara's energies are bounded.
\item
A metric function on $X$, $d_X : X \times X \to \mathbb{R} $, satisfies
\[
	C_1 \| f-g \|_{L^1 (\mathbb{S}_L , \mathbb{R}^d)} \leq d_X (f,g) \leq
	C_2 \| f-g \|_{W^{1,\infty} (\mathbb{S}_L , \mathbb{R}^d)}
\]
for $f$, $g \in X$,
where
$C_1$,
$C_2 > 0$
are constants.
The full definition of $X$ is given in Section \ref{sec.gamma}.
\end{enumerate}
\end{rem}

In addition,
we discuss minimizers of the discrete energies $\mathcal{E}_n^{\alpha ,q}$ of the set of all  \textit{equilateral} polygons with $n$ edges.
If we try to decrease the values of the discrete energies $\mathcal{E}_n^{\alpha ,q}$ without conditions of lengths of edges and the numbers of vertices,
polygons degenerate into triangles.
Hence, the infimum of the discrete energies $\mathcal{E}_n^{\alpha ,q}$ of the set of all polygons is $0$.
That is reason why we consider their minimizers in the set of all equilateral polygons with $n$ edges.

\begin{thm}[cf. Theorem \ref{discrete--minimizer}]
Let
$ \alpha \in (0, \infty ) $
and
$ q \in [1, \infty ) $.
Then, minimizers of
$ \mathcal{E}_n^{\alpha ,q} $
are regular polygons in the set of equilateral polygons with $n$ edges.
In particular, a regular polygon with $n$ edges is the only minimizer,
except for congruent transformations and similar transformations.
\end{thm}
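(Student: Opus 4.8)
The plan is to restrict to equilateral polygons, simplify the energy into a sum over vertex-offsets, and then run a convexity-plus-symmetrization argument that pins down one extremal property of the regular polygon. Fix $n$ and work in the class of equilateral polygons, so every edge has length $\ell:=L/n$ and $|p_n(a_{i+1})-p_n(a_i)|=\ell$. Writing $X_i:=p_n(a_i)$, the two edge factors contribute a constant $\ell^2$, while the intrinsic distance becomes purely combinatorial, $D(X_i,X_{i+k})=\ell\,\hat k$ with $\hat k:=\min(k,n-k)$. Up to the fixed positive constant, I would therefore rewrite the energy grouped by offset $k$ as
\[
	\mathcal{E}_n^{\alpha,q}(p_n)=\tfrac1\alpha L^{\alpha q-2}\ell^2\sum_{k=1}^{n-1}\sum_{i=1}^n f_k\!\left(|X_{i+k}-X_i|^2\right),\qquad f_k(s):=\left(s^{-\alpha/2}-(\ell\hat k)^{-\alpha}\right)^q ,
\]
where $f_k$ is defined on $(0,(\ell\hat k)^2]$, the argument lying in this interval because a chord is no longer than the arc joining its endpoints; the offsets with $\hat k=1$ contribute nothing.

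Next I would record the relevant analytic properties of $f_k$. For $\alpha>0$ and $q\ge 1$ the function $g(s):=s^{-\alpha/2}-(\ell\hat k)^{-\alpha}$ is nonnegative, strictly decreasing and strictly convex on the interval, and $f_k=g^q$ inherits both features from $(g^q)''=q(q-1)g^{q-2}(g')^2+qg^{q-1}g''>0$. Two consequences drive the proof. First, by Jensen's inequality $\tfrac1n\sum_i f_k(s_i)\ge f_k(\bar s_k)$ with $\bar s_k:=\tfrac1n\sum_i s_i$, and equality holds only when all $s_i$ coincide. Second, since $f_k$ is strictly decreasing, $f_k(\bar s_k)\ge f_k(\bar s_k^{\mathrm{reg}})$ whenever $\bar s_k$ does not exceed the corresponding value $\bar s_k^{\mathrm{reg}}$ for the regular polygon, with equality only when $\bar s_k=\bar s_k^{\mathrm{reg}}$.

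The crux is the \emph{key lemma}: for every $k$ the regular polygon maximizes the averaged squared diagonal $\bar s_k=\tfrac1n\sum_i|X_{i+k}-X_i|^2$ over all closed equilateral polygons. Writing the edge vectors $e_i:=X_{i+1}-X_i$ (so $|e_i|=\ell$ and $\sum_i e_i=0$) and the secants $w_i:=X_{i+k}-X_i=\sum_{a=0}^{k-1}e_{i+a}$, I would compute $\sum_{i=1}^n|w_i|^2=\sum_{a,b=0}^{k-1}c_{a-b}$ with the cyclic autocorrelation $c_r:=\sum_{i=1}^n\langle e_i,e_{i+r}\rangle$, and then pass to the power spectrum $\{\rho_m\}_{m=0}^{n-1}$ of the edge sequence, for which $c_r=\sum_m\rho_m\cos(2\pi mr/n)$, $\sum_m\rho_m=c_0=n\ell^2$, and $\rho_0=0$ (the closure condition). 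This turns the average into a Fejér-kernel-weighted sum, $\bar s_k=\tfrac1n\sum_m\rho_m F_k(m)$ with $F_k(m)=\sin^2(\pi km/n)/\sin^2(\pi m/n)$. Relaxing the pointwise constraints $|e_i|=\ell$ to the single constraint $\sum_m\rho_m=n\ell^2$, the maximum of $\bar s_k$ equals $\ell^2\max_{1\le m\le n-1}F_k(m)$, attained by concentrating all power in the optimal mode. The planar regular polygon is precisely the mode-$m=1$ configuration and is admissible, so the relaxed bound is realized inside the equilateral class. The main obstacle is the elementary but nontrivial inequality $F_k(1)=\max_{1\le m\le n-1}F_k(m)$, stating that the Fejér kernel is largest at the lowest admissible frequency; it is immediate on the first lobe from the monotonicity of $\sin(k\psi)/\sin\psi$ on $(0,\pi/k)$, but the later lobes, where the cosines wrap past $\pi$, require a separate estimate, and I expect the real work to lie there.

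Finally I would assemble the pieces and extract uniqueness. Combining the two consequences of the convexity step with the key lemma gives, for each $k$, $\sum_i f_k(|X_{i+k}-X_i|^2)\ge n\,f_k(\bar s_k^{\mathrm{reg}})$, and the regular polygon attains equality since all its $k$-diagonals are equal (Jensen tight) with common value $\bar s_k^{\mathrm{reg}}$ (lemma tight). Summing over $k$ shows the regular polygon minimizes $\mathcal{E}_n^{\alpha,q}$ on the equilateral class. For uniqueness, any minimizer must force equality for every active $k$ (those with $\hat k\ge 2$): the Jensen equality forces all $k$-diagonals to share one length, and the monotonicity equality forces that length to equal the regular polygon's. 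Hence every pairwise distance of a minimizer agrees with that of the regular polygon, so the two vertex sets are congruent; allowing the edge length to vary accounts for the similarity transformations in the statement.
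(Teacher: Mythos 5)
Your proposal follows essentially the same route as the paper: group the energy by vertex offset $k$, observe that the offset-wise summand is a decreasing convex function of the squared chord length, apply Jensen's inequality plus monotonicity, and reduce everything to the statement that among closed equilateral $n$-gons the regular one maximizes the mean squared $k$-diagonal. The paper packages the Jensen step and the chord-maximization step into a single cited result (Lemma \ref{dm--lem}, from G\'abor and from \cite[Lemma 7]{ACFGH}, applied to the increasing concave function $f_k(x)=-F(\sqrt{x},|a_k-a_0|)$), and your decomposition is exactly the internal structure of that lemma, so the two arguments are the same in substance. The one place where your write-up falls short of a proof is the ``key lemma'': your Fourier-analytic reduction (edge autocorrelations, power spectrum with $\rho_0=0$, Fej\'er-type kernel $F_k(m)=\sin^2(\pi km/n)/\sin^2(\pi m/n)$) is correct and is a legitimate way to prove the chord-maximization statement, but the decisive inequality $\max_{1\le m\le n-1}F_k(m)=F_k(1)$ is asserted, not proved --- you yourself flag that ``the real work'' lies in the lobes where $km/n$ wraps past $1$. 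Since this inequality \emph{is} the content of G\'abor's theorem / \cite[Lemma 7]{ACFGH}, the paper's proof is complete by citation while yours is not; to close the gap you would either need to carry out that kernel estimate or simply invoke the same reference. Your equality analysis (Jensen forces all $k$-diagonals equal, monotonicity forces them to match the regular polygon's, and a point set is determined up to congruence by its distance matrix) is a sound way to obtain uniqueness and matches the equality discussion in the paper's Lemma \ref{dm--lem}.
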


In what follows,
for simplicity,
we write
$ D( \gamma(s), \gamma(t) )$,
$ D(p_n(a_i) , p_n(a_j)) $
as
$|t-s|$,
$|a_j - a_i|$
respectively.

\paragraph{Acknowledgment.}
The author is grateful to Professor Takeyuki Nagasawa for his direction and many useful advices and remarks.
Additionally,
the author would like to thank Professor Neal Bez for English language editing and mathematical comments.
%
%

\section{Approximation of O'Hara's energy by inscribed polygons}

In this section, we show that the discrete energy defined in previous section converges to O'Hara's energy under certain conditions.

First, in order to describe our claim, we define the \textit{Sobolev-Slobodeckij space}.

\begin{df}[The (cyclic) Sobolev-Slobodeckij space]
Let
$ \sigma \in (0,1) $, 
and let
$ q \in [1,\infty) $.
We define the \textit{Sobolev-Slobodeckij space} by
\[
	W^{\sigma ,q} (\mathbb{S}_L,\mathbb{R}^d)
	:=
	\left\{
	f \in L^q (\mathbb{S}_L,\mathbb{R}^d)
	\,\left|\,
	\int_{\mathbb{S}_L} \int_{\mathbb{S}_L}
	\frac{ | f^\prime (t) - f^\prime (s) |^q }{ |t-s|^{ 1+q \sigma } }
	dsdt
	< \infty
	\right.
	\right\},
\]
equipped with the norm
\[
	\| f \|_{W^{\sigma ,q} (\mathbb{S}_L,\mathbb{R}^d)}
	:=
	\| f \|_{L^q (\mathbb{S}_L,\mathbb{R}^d) }
	+ 
	\left(
	\int_{\mathbb{S}_L} \int_{\mathbb{S}_L}
	\frac{ | f^\prime (t) - f^\prime (s) |^q }{ |t-s|^{ 1+q \sigma } }
	dsdt
	\right)^{1/q}.
\]
Furthermore, we put
\[
	W^{ 1+\sigma , q } (\mathbb{S}_L,\mathbb{R}^d)
	:=
	\{
	f \in W^{ 1 , q } (\mathbb{S}_L,\mathbb{R}^d)
	\, | \,
	f' \in W^{ \sigma , q } (\mathbb{S}_L,\mathbb{R}^d)
	\}.
\]
\end{df}

Using the Sobolev-Slobodeckij space, we can describe the necessary and sufficient conditions for the boundedness of O'Hara's energy.

\begin{prop}[{\cite[Theorem 1.1]{Bla}}]\label{propbdd}
Let
$ \gamma \in C^{ 0 , 1 } (\mathbb{S}_L,\mathbb{R}^d) $
be a regular curve.
Let
$ \alpha \in (0, \infty ) $
and
$ q \in [1, \infty ) $
with
$ 2 \leq \alpha q < 2q+1 $,
and set
$ \displaystyle \sigma := \frac{ \alpha q - 1 }{ 2q } $.
Then,
$ \mathcal{E}^{ \alpha , q } ( \gamma ) < \infty $
if and only if
$ \gamma \in W^{ 1+\sigma , 2q } (\mathbb{S}_L,\mathbb{R}^d) $.
\end{prop}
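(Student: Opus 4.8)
Since this statement is quoted from \cite{Bla}, one may simply invoke that reference; I instead sketch a self-contained route. The plan is to show that, up to the bi-Lipschitz constants of the arc-length parametrization, $\mathcal{E}^{\alpha,q}(\gamma)$ is comparable to the Gagliardo seminorm of $\gamma'$ entering the definition of $W^{1+\sigma,2q}$, namely $\int_{\mathbb{S}_L}\int_{\mathbb{S}_L}|\gamma'(t)-\gamma'(s)|^{2q}/|t-s|^{\alpha q}\,ds\,dt$ (observe $1+2q\sigma=\alpha q$). Everything rests on one elementary identity: writing $r:=D(\gamma(s),\gamma(t))$ for the intrinsic (shorter-arc) distance and $\chi:=|\gamma(t)-\gamma(s)|$ for the chord, the relation $|\gamma'|\equiv1$ gives, via $\gamma'(u)\cdot\gamma'(v)=1-\tfrac12|\gamma'(u)-\gamma'(v)|^2$,
\[
	r^2-\chi^2=\frac12\int_s^t\int_s^t|\gamma'(u)-\gamma'(v)|^2\,du\,dv\ge0.
\]
In particular $\chi\le r$, and $\mathcal{M}^\alpha(\gamma)=\chi^{-\alpha}-r^{-\alpha}=(r^\alpha-\chi^\alpha)/(\chi^\alpha r^\alpha)$ is manifestly nonnegative.

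I would first record two reductions. The first is the bi-Lipschitz equivalence $\chi\asymp r$. For the ``if'' direction this follows from the embedding $W^{1+\sigma,2q}\hookrightarrow C^1$, which is available precisely because $2q\sigma=\alpha q-1\ge1$, together with injectivity (controlling $\chi/r$ away from the diagonal) and $C^1$-regularity (near the diagonal). For the ``only if'' direction one must instead deduce the bi-Lipschitz bounds \emph{from} finiteness of the energy, since an incipient self-intersection makes $\chi^{-\alpha}$, hence the integral, diverge. The second reduction is the mean-value comparison $r^\alpha-\chi^\alpha\asymp r^{\alpha-2}(r^2-\chi^2)$, valid once $\chi\asymp r$ (use $r-\chi=(r^2-\chi^2)/(r+\chi)$ and $\chi^{\alpha-1}\asymp r^{\alpha-1}$). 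Combining the two yields the two-sided pointwise bound $\mathcal{M}^\alpha(\gamma)\asymp(r^2-\chi^2)/r^{\alpha+2}$, with constants depending only on the bi-Lipschitz constant.

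For the ``if'' direction I then estimate from above. Applying Jensen's inequality (here $q\ge1$ is essential) to the normalized average of $|\gamma'(u)-\gamma'(v)|^2$ over the square $[s,t]^2$ of area $r^2$,
\[
	(r^2-\chi^2)^q\le\frac{r^{2q-2}}{2^q}\int_s^t\int_s^t|\gamma'(u)-\gamma'(v)|^{2q}\,du\,dv,
\]
so that $(\mathcal{M}^\alpha(\gamma))^q\lesssim r^{-(\alpha q+2)}\int_s^t\int_s^t|\gamma'(u)-\gamma'(v)|^{2q}\,du\,dv$. Integrating in $(s,t)$ and exchanging the order of integration by Fubini, the weight attached to a fixed pair $(u,v)$ is $\int_{s\le\min(u,v)}\int_{t\ge\max(u,v)}|t-s|^{-(\alpha q+2)}\,dt\,ds\asymp|u-v|^{-\alpha q}$, which reproduces exactly the Gagliardo seminorm and gives $\mathcal{E}^{\alpha,q}(\gamma)\lesssim\int_{\mathbb{S}_L}\int_{\mathbb{S}_L}|\gamma'(t)-\gamma'(s)|^{2q}/|t-s|^{\alpha q}\,ds\,dt<\infty$.

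The reverse inequality is the genuine difficulty, because $r^2-\chi^2$ is only an \emph{average} of first differences of $\gamma'$ and cannot recover a single difference $|\gamma'(t)-\gamma'(s)|$. I would instead pass through the second-difference (Besov) description of $W^{1+\sigma,2q}$: since $1<1+\sigma<2$ (this is where $\alpha q<2q+1$ enters), the seminorm $[\gamma]^{2q}_{W^{1+\sigma,2q}}$ is comparable to $\int\int|\gamma(x+h)+\gamma(x-h)-2\gamma(x)|^{2q}/|h|^{2q+\alpha q}\,dx\,dh$. Setting $s=x-h$, $t=x+h$, $A=\gamma(x+h)-\gamma(x)$, $B=\gamma(x)-\gamma(x-h)$, the parallelogram law gives $|A-B|^2+|A+B|^2=2|A|^2+2|B|^2\le4h^2$, whence $r^2-\chi^2=4h^2-|A+B|^2\ge|A-B|^2=|\gamma(x+h)+\gamma(x-h)-2\gamma(x)|^2$ with no bi-Lipschitz needed. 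The map $(x,h)\mapsto(x-h,x+h)$ is an exact change of variables (Jacobian $2$), so feeding the lower bound $\mathcal{E}^{\alpha,q}(\gamma)\gtrsim\int\int((r^2-\chi^2)/r^{\alpha+2})^q\,ds\,dt$ through it and using $(2h)^{(\alpha+2)q}=2^{(\alpha+2)q}h^{2q+\alpha q}$ yields $\mathcal{E}^{\alpha,q}(\gamma)\gtrsim[\gamma]^{2q}_{W^{1+\sigma,2q}}$, forcing $\gamma\in W^{1+\sigma,2q}$. The two steps demanding care, and the expected main obstacle, are deriving the bi-Lipschitz bounds purely from finiteness of the energy and justifying the second-difference characterization of the space; the hypotheses $\alpha q\ge2$ and $\alpha q<2q+1$ are used exactly to secure the $C^1$-embedding and the fractional order $1+\sigma\in(1,2)$, respectively.
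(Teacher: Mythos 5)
The paper does not prove this proposition at all: it is imported verbatim from Blatt \cite{Bla}, and the bi-Lipschitz estimate that you single out as the main obstacle is likewise imported (as \cite[Lemma 2.1]{Bla}, quoted in Section 2). So there is no in-paper argument to compare against; what can be assessed is whether your self-contained sketch would go through. Its skeleton --- the identity $|t-s|^2-|\gamma(t)-\gamma(s)|^2=\tfrac12\iint|\gamma'(u)-\gamma'(v)|^2\,du\,dv$, Jensen plus Fubini with the weight $\iint|t-s|^{-(\alpha q+2)}\,ds\,dt\asymp|u-v|^{-\alpha q}$ for the upper bound, and a second-difference (Besov) characterization of $W^{1+\sigma,2q}$ for the lower bound --- is essentially the structure of Blatt's actual proof, and the parallelogram computation $r^2-\chi^2\ge|\gamma(x+h)+\gamma(x-h)-2\gamma(x)|^2$ is correct and is a clean way to organize the hard direction; the exponent bookkeeping ($1+2q\sigma=\alpha q$ and $1+(1+\sigma)2q=\alpha q+2q$) checks out.

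Two caveats. First, one step fails as stated: you justify the bi-Lipschitz bound in the ``if'' direction by the embedding $W^{1+\sigma,2q}\hookrightarrow C^1$, ``available precisely because $2q\sigma=\alpha q-1\ge1$''. The embedding $W^{\sigma,2q}(\mathbb{S}_L)\hookrightarrow C^0$ requires the \emph{strict} inequality $2q\sigma>1$; at the admissible endpoint $\alpha q=2$ one has $2q\sigma=1$, the critical exponent, and $W^{1/(2q),2q}\not\hookrightarrow L^\infty$ (already $H^{1/2}\not\hookrightarrow L^\infty$ in the M\"obius case $\alpha=2$, $q=1$). What is actually needed is weaker --- uniform smallness of $\frac{1}{|t-s|^2}\int_s^t\int_s^t|\gamma'(u)-\gamma'(v)|^2\,du\,dv$ on short arcs, which follows from $\gamma'\in W^{1/(2q),2q}\subset\mathrm{VMO}$ together with $|\gamma'|\equiv1$ --- but that is a different argument from the one you give and must be supplied. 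Second, the two ingredients you explicitly defer (deducing the bi-Lipschitz bounds from finiteness of the energy, and the equivalence of the Gagliardo and second-difference seminorms for fractional order $1+\sigma\in(1,2)$) are each nontrivial lemmas in their own right rather than routine verifications. As submitted, the proposal is therefore a correct plan with the right key inequalities, not yet a complete proof; since the paper itself offers only a citation here, that is a reasonable state of affairs, but the endpoint case $\alpha q=2$ needs the repair above before the ``if'' direction is sound.
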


From now on,
we write
$ \sigma = ( \alpha q - 1 )/( 2q ) $.
For a given regular curve $\gamma$,
we say that a polygon $p$ is \textit{inscribed} in $\gamma$
if $p$ satisfies
\begin{itemize}
	\item[(i)]
	the number of vertices is finite,
	\item[(ii)]
	the set of vertices is
	$ \{ \gamma(s_1) , \gamma(s_2) , \ldots , \gamma(s_n) \} $
	with
	$ s_1 < s_2 < \cdots < s_n \ (< s_1+L) $,
	\item[(iii)]
	the $i$-th edge is the segment jointing
	$ \gamma(s_i) $ and $ \gamma(s_{i+1}) $, where we interpret
	$ s_{n+1} = s_1 $.
\end{itemize}

The aim of this section is to prove the following two theorems.

\begin{thm}
[The rate of convergence of discretization via the approximation by inscribed polygons]
\label{orderinscribed}
Assume that
$ \alpha \in ( 0 , \infty ) $
and
$ q \in [1, \infty ) $
satisfy
$ 2 \leq \alpha q < 2q+1 $.
Let
$ \gamma \in C^{ 1,1 } ( \mathbb{S}_L , \mathbb{R}^d ) $
be a curve parametrized by arc-length embedded in
$ \mathbb{R}^d $,
where $L$ is the length of $\gamma$.
Let
$ c $, $ \bar{c} > 0 $,
and set
$
	K := 
	\| \gamma^{ \prime \prime } \|
	_{ L^\infty ( \mathbb{S}_L , \mathbb{R}^d ) }
$.

In addition,
for
$ n \in \mathbb{N} $,
let
$ \{ b_k \}_{ k=1 }^n $
be a division of\,\,\,$ \mathbb{S}_L $
satisfying
\begin{equation}
	\frac{ c L }{ n }
	\leq
	\min_{k=1, \ldots ,n}
	| \gamma ( b_{ k+1 } ) - \gamma( b_k ) |
	\leq
	\max_{k=1, \ldots ,n}
	| \gamma ( b_{ k+1 } ) - \gamma( b_k ) |
	\leq
	\frac{ \bar{c} L }{ n },
	\label{div.bk}
\end{equation}
and let $p_n$ be the inscribed polygon in $ \gamma $ with vertices
$ \gamma(b_1) , \ldots , \gamma(b_n) $.
Then,
if the number $n$ of points of the division is sufficiently large,
there exists
$ C > 0 $
depending on 
$ c $, $ \bar{c} $, $ \mathcal{E}^{\alpha,q} (\gamma) $
such that
\[
	| \mathcal{E}^{ \alpha , q } ( \gamma ) 
	- \mathcal{E}_n^{ \alpha , q } ( p_n )|
	\leq
	C
	\{
	( LK )^{2q}
	+
	( LK )^{2q+2}
	\}
	\cfrac{1}{ n^{2q- \alpha q+1} }.
\]
Furthermore, if $\alpha \leq 2$,
then there exists $C>0$ depending on $ c $, $ \bar{c} $, $ \mathcal{E}^{\alpha,q} (\gamma) $
such that
\[
	| \mathcal{E}^{ \alpha , q } ( \gamma ) 
	- \mathcal{E}_n^{ \alpha , q } ( p_n )|
	\leq
	C
	\{
	( LK )^{ \alpha q }
	+
	( LK )^{ \alpha q - \alpha + 2 }
	+
	( LK )^{ \alpha q + 2 }
	\}
	\cfrac{ \log n }{ n }.
\]
\end{thm}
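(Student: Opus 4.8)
The plan is to regard the discrete energy $\mathcal{E}_n^{\alpha,q}(p_n)$ as a product quadrature of the double integral defining $\mathcal{E}^{\alpha,q}(\gamma)$, with nodes at the vertices $(a_i,a_j)$ and cell weights $\ell_i\ell_j$, where $\ell_k := |\gamma(b_{k+1})-\gamma(b_k)|$ is the $k$-th edge length, and then to estimate the resulting error cell by cell. A decisive simplification is that $p_n$ is \emph{inscribed}: its vertices are exactly the points $\gamma(b_k)$, so the extrinsic chord $|p_n(a_j)-p_n(a_i)|$ equals $|\gamma(b_j)-\gamma(b_i)|$, and the first (singular) term of $\mathcal{M}^\alpha_n$ agrees with that of $\mathcal{M}^\alpha(\gamma)$ sampled at $(b_i,b_j)$. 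The only genuinely new discrepancies are then (i) the polygonal arc-length parameters $a_k$ differ from the curve parameters $b_k$ because each edge is slightly shorter than the corresponding arc, and (ii) the intrinsic distance $D(p_n(a_i),p_n(a_j))$ measured along the polygon differs from $|b_j-b_i|$ measured along $\gamma$. I would isolate these at the outset, controlling the global length ratio $L_n/L$ (with $L_n := \sum_k \ell_k$) through the scale invariance of both energies, so that the comparison reduces, up to explicitly lower-order terms, to a quadrature estimate for the single integrand $G(s,t) := (\mathcal{M}^\alpha(\gamma)(s,t))^q$.

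The second ingredient is a package of geometric estimates for $C^{1,1}$ arc-length curves, quantifying everything in terms of $K = \|\gamma''\|_{L^\infty}$. The basic one is the chord--arc expansion $\bigl||\gamma(t)-\gamma(s)| - |t-s|\bigr| \lesssim K^2|t-s|^3$ together with the bi-Lipschitz bound $|\gamma(t)-\gamma(s)| \ge c'|t-s|$ at small scales; the spacing hypothesis on $\{b_k\}$ transfers these to the grid, giving $\ell_k = (b_{k+1}-b_k)(1 - O((LK/n)^2))$ and comparable control on $a_k - b_k$. Expanding $\mathcal{M}^\alpha$ near the diagonal then yields the key pointwise behaviour $\mathcal{M}^\alpha(\gamma)(s,t) = \tfrac{\alpha}{24}|\gamma''|^2|t-s|^{2-\alpha}(1+o(1))$, so that $G(s,t) \sim |t-s|^{(2-\alpha)q}$ as $s\to t$, with derivatives of $G$ of corresponding order; the integrability of $G$ up to the diagonal is exactly the hypothesis $(2-\alpha)q > -1$, i.e.\ $\alpha q < 2q+1$. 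These bounds simultaneously control the pointwise difference between $G$ and the sampled integrand $(\mathcal{M}^\alpha_n(p_n))^q$ and the variation of $G$ within each cell.

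With these in hand I would split $\mathbb{S}_L\times\mathbb{S}_L$ into a near-diagonal strip (the cells excluded from the sum, $i=j$, together with a fixed number of adjacent off-diagonals) and the regular region. On the regular region $G$ is smooth, and the standard one-step quadrature error is bounded, on the off-diagonal indexed by $m=|i-j|$, by the product of the number of such cells and the cell-wise error $\sim \ell_i\ell_j\,\sup|\partial^2 G|$, with $\sup|\partial^2 G| \sim (mL/n)^{(2-\alpha)q-2}$. Summing over $m$ reduces the total regular error to a series of the form $\sum_m m^{(2-\alpha)q-2}$ scaled by a power of $n$; its behaviour is governed by the sign of $2-\alpha$, producing the power rate $n^{-(2q-\alpha q+1)}$ in general and, in the borderline regime $\alpha\le 2$ where the series is harmonic-type, the extra logarithmic factor $\log n$. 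The near-diagonal strip is handled directly: the continuous contribution is $\int\!\!\int_{|t-s|\lesssim L/n}G \sim K^{2q}\!\int_0^{L/n} r^{(2-\alpha)q}\,dr \sim (LK)^{2q}n^{-(2q-\alpha q+1)}$, matching the excluded diagonal sum, with the next Taylor term supplying the $(LK)^{2q+2}$ (respectively $(LK)^{\alpha q - \alpha+2}$, $(LK)^{\alpha q+2}$) corrections. Collecting the two regions gives the two stated bounds.

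The main obstacle is the near-diagonal analysis, and specifically the competition, as one approaches the diagonal, between the shrinking mesh and the growing cell-wise quadrature error $\sim (mL/n)^{(2-\alpha)q-2}$: the per-off-diagonal errors are summable only because of the hypothesis $\alpha q < 2q+1$, and tracking their sum sharply enough to recover the exact exponent $2q-\alpha q+1$ --- and to detect the logarithmic borderline at $\alpha = 2$ --- is the delicate point. A subsidiary but fiddly difficulty is the bookkeeping forced by the inscription mismatch: the parameter shift $a_k$ versus $b_k$ and the polygonal-versus-curve intrinsic distance must be shown to perturb $G$ by genuinely lower order uniformly up to the diagonal, which again relies on the $C^{1,1}$ estimates surviving the most singular cells. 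I expect essentially all the work, and all the powers of $LK$, to be generated here.
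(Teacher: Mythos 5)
Your overall architecture is the same as the paper's: a fixed-width near-diagonal band $|j-i|\le N$ where both the integral and the sum are estimated directly and separately (no cancellation needed), a far region treated as a one-point quadrature error per cell plus the corrections coming from the inscription mismatch (the parameter shift $a_k$ versus $b_k$, the polygonal intrinsic distance, and the length defect $L-\tilde L_n$ entering through the scale-invariant prefactor), and a final summation over off-diagonals $m=|j-i|$ whose borderline produces the $\log n$ at $\alpha\le 2$. The paper implements the ``variation of $G$ within each cell'' by splitting it into four explicit first-order difference terms $A_{i,j},B_{i,j},C_{i,j},D_{i,j}$, but that is bookkeeping, not a different idea.

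There is, however, one concrete error in your central quantitative step. You bound the cell-wise quadrature error by $\ell_i\ell_j\,\sup|\partial^2G|$ with $\sup|\partial^2G|\sim(mL/n)^{(2-\alpha)q-2}$. First, $G=(\mathcal{M}^\alpha(\gamma))^q$ is not twice differentiable for $\gamma\in C^{1,1}$ ($\gamma''$ exists only a.e.), so second derivatives are not available; more importantly, the quadrature node is the \emph{corner} $(b_i,b_j)$ of the cell, not its centroid, so the rule is only first-order accurate and no second-derivative gain is possible. Taken literally, your formula leads to $\sum_m m^{(2-\alpha)q-2}$, whose borderline sits at $\alpha=2-1/q$ rather than $\alpha=2$, and for $q=1$, $\alpha q\ge2$ the resulting exponent $1-(2-\alpha)q\ge1$ does not even decay in $n$. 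The correct per-cell error is the first-order one, $\sim(L/n)^3\cdot\sup$ of a first difference quotient of $G$, i.e.\ $\sim(L/n)^3(mL/n)^{(2-\alpha)q-1}$; summing $n$ cells per off-diagonal and then over $m$ gives $\sum_m m^{(2-\alpha)q-1}$, which converges for $\alpha>2$ (yielding $n^{-(2q-\alpha q+1)}$) and is harmonic at $\alpha=2$ (yielding $\log n/n$) --- exactly the statement, and exactly what the paper's finite-difference estimates (\ref{A--pre.est.1})--(\ref{A--pre.est.4}) encode. Two smaller points: the asymptotic $\mathcal{M}^\alpha=\frac{\alpha}{24}|\gamma''|^2|t-s|^{2-\alpha}(1+o(1))$ is neither available at $C^{1,1}$ regularity nor needed --- only the upper bounds (\ref{alpha2more--est.}) and (\ref{alpha2less--est.}) are used; and to obtain the advertised powers $(LK)^{\alpha q}$, $(LK)^{\alpha q-\alpha+2}$ in the $\alpha\le2$ case you must use the $\alpha$-adapted bound (\ref{alpha2less--est.}) rather than the universal $K^2|t-s|^{2}$ one, which would give $(LK)^{2q}$ instead.
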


\begin{thm}[The convergence of the discrete energy of inscribed polygons]\label{inscribed}
Assume that
$ \alpha \in ( 0 , \infty ) $
and
$ q \in [1, \infty ) $
satisfy
$ 2 \leq \alpha q < 2q+1 $.
Let
$ \gamma \in W^{ 1+\sigma , 2q } (\mathbb{S}_L,\mathbb{R}^d) $,
and let $ p_n $ be the inscribed polygon as in Theorem \ref{orderinscribed}.
Then, we have
\[
	\lim_{ n \rightarrow \infty }
	\mathcal{E}_n^{\alpha ,q} ( p_n )
	=
	\mathcal{E}^{\alpha ,q} ( \gamma ).
\]
\end{thm}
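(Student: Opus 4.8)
The plan is to recast the discrete energy as an integral over the \emph{same} domain $\mathbb{S}_L\times\mathbb{S}_L$ on which $\mathcal{E}^{\alpha,q}(\gamma)$ is defined and then pass to the limit by dominated convergence. Writing $I_k=[b_k,b_{k+1}]$ for the $k$-th sub-arc (so $|I_k|=b_{k+1}-b_k$ is its arc length) and $|\Delta_k|=|\gamma(b_{k+1})-\gamma(b_k)|$ for the corresponding chord, I would use the identity
\[
	|\Delta_i|\,|\Delta_j|
	=
	\int_{I_i}\int_{I_j}
	\frac{|\Delta_i|}{|I_i|}\,\frac{|\Delta_j|}{|I_j|}\,ds\,dt
\]
to rewrite
\[
	\mathcal{E}_n^{\alpha,q}(p_n)
	=
	\frac1\alpha L^{\alpha q-2}
	\int_{\mathbb{S}_L}\int_{\mathbb{S}_L} g_n(s,t)\,ds\,dt,
\]
where $g_n$ is the step function equal, for $s\in I_i$, $t\in I_j$ with $i\neq j$, to $(\mathcal{M}^\alpha_n(p_n))^q\,\frac{|\Delta_i|}{|I_i|}\frac{|\Delta_j|}{|I_j|}$ and set to $0$ on the diagonal blocks $i=j$. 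Since $p_n(a_i)=\gamma(b_i)$, the factor $\mathcal{M}^\alpha_n(p_n)$ is built from the genuine chord $|\gamma(b_j)-\gamma(b_i)|$ and the polygonal intrinsic distance between the two vertices.

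The next step is the pointwise convergence $g_n(s,t)\to(\mathcal{M}^\alpha(\gamma)(s,t))^q$ for a.e.\ $(s,t)$ with $s\neq t$. Three ingredients are needed: as the mesh tends to $0$ the endpoints $\gamma(b_{i(s)}),\gamma(b_{j(t)})$ converge to $\gamma(s),\gamma(t)$, so the extrinsic chord converges to $|\gamma(t)-\gamma(s)|$; the polygonal intrinsic distance converges to $D(\gamma(s),\gamma(t))$, which follows once one checks that the polygon length between two vertices converges to the corresponding arc length; and the chord-to-arc ratios $|\Delta_k|/|I_k|\to1$, valid pointwise because $\gamma$ is $C^1$ (it lies in $W^{1+\sigma,2q}$, which embeds into $C^1$ for $\alpha q>2$, the borderline $\alpha q=2$ being handled by a.e.\ differentiability). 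Off the diagonal the denominators stay bounded away from $0$, so the limit is immediate there; note that the diagonal blocks on which $g_n\equiv0$ shrink to the null diagonal, so this does not affect the a.e.\ limit.

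The crux is producing an $L^1(\mathbb{S}_L\times\mathbb{S}_L)$ majorant $G$ with $|g_n|\le G$ for all large $n$. Away from a fixed neighborhood of the diagonal this is routine, since $\mathcal{M}^\alpha_n$ is uniformly bounded there. \emph{The main obstacle is the near-diagonal region}, where $\mathcal{M}^\alpha_n$ is large: here I would bound it, using the regularity of $\gamma$ and the two-sided chord estimate \eqref{div.bk}, by a constant multiple of a near-diagonal kernel whose $q$-th power is integrable precisely because $\gamma\in W^{1+\sigma,2q}$. Concretely, the Sobolev--Slobodeckij seminorm controls $\int\!\!\int |\gamma'(t)-\gamma'(s)|^{2q}|t-s|^{-\alpha q}\,ds\,dt$, which by Blatt's equivalence (Proposition \ref{propbdd}) is comparable to the finiteness of $\mathcal{E}^{\alpha,q}(\gamma)=\frac1\alpha L^{\alpha q-2}\int\!\!\int(\mathcal{M}^\alpha(\gamma))^q$; the task is to reproduce this bound at the discrete level uniformly in $n$, dominating $g_n$ near the diagonal by (a translation/maximal-function modification of) the continuous integrand $(\mathcal{M}^\alpha(\gamma))^q$. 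This uniform-in-$n$ near-diagonal estimate is exactly where the hypothesis $\gamma\in W^{1+\sigma,2q}$ (equivalently $\mathcal{E}^{\alpha,q}(\gamma)<\infty$) is indispensable. Once the majorant is in hand, dominated convergence gives $\mathcal{E}_n^{\alpha,q}(p_n)\to\frac1\alpha L^{\alpha q-2}\int\!\!\int(\mathcal{M}^\alpha(\gamma))^q=\mathcal{E}^{\alpha,q}(\gamma)$.

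As an alternative that leverages the already-proven rate estimate, one could argue by density: approximate $\gamma$ in $W^{1+\sigma,2q}$ by $C^{1,1}$ embedded curves $\gamma_m$, invoke continuity of $\mathcal{E}^{\alpha,q}$ (again via Proposition \ref{propbdd}) to obtain $\mathcal{E}^{\alpha,q}(\gamma_m)\to\mathcal{E}^{\alpha,q}(\gamma)$, apply Theorem \ref{orderinscribed} to each $\gamma_m$ to get $\mathcal{E}_n^{\alpha,q}(p_n^m)\to\mathcal{E}^{\alpha,q}(\gamma_m)$ as $n\to\infty$, and close with a three-$\varepsilon$ argument using $\sup_n|\mathcal{E}_n^{\alpha,q}(p_n)-\mathcal{E}_n^{\alpha,q}(p_n^m)|\to0$ as $m\to\infty$. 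I expect this last uniform-in-$n$ bound to present the same essential difficulty as the domination above, so either route concentrates the real work in the near-diagonal, finite-energy estimate.
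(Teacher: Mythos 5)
There is a genuine gap: the step you yourself call the crux --- producing an $L^1(\mathbb{S}_L\times\mathbb{S}_L)$ majorant $G$ with $|g_n|\le G$ uniformly in $n$ near the diagonal --- is not carried out, and it is doubtful it can be carried out in the form you propose. Your candidate is ``a translation/maximal-function modification of $(\mathcal{M}^\alpha(\gamma))^q$''; but under the hypotheses this integrand is merely in $L^1$, and the Hardy--Littlewood maximal function of an $L^1$ function need not be integrable, so this route does not obviously close. The same difficulty, as you note, blocks the density alternative. Since everything else in your plan (pointwise convergence off the diagonal, the far-from-diagonal bounds) is routine, the proposal as written does not yet prove the theorem.

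The paper avoids the need for any uniform majorant. It introduces $\varepsilon_n$, the integral of $|\gamma'(v)-\gamma'(u)|^{2q}|v-u|^{-\alpha q}$ over the union of the diagonal blocks $[b_k,b_{k+1}]^2$ (which tends to $0$ by absolute continuity of the integral), and a cutoff $N_n:=n\max\{\varepsilon_n^{1/(4q)},n^{-1/(6q)}\}$ satisfying $N_n\to\infty$ but $N_n/n\to 0$. For $|j-i|\le N_n$ it does \emph{not} compare the discrete and continuous contributions to each other: it shows each one separately tends to zero, by dominating the discrete sum by $C\iint_{E_n}(\mathcal{M}^\alpha(\gamma))^q\,ds\,dt$ where $E_n$ is a union of (shifted) blocks with $\mu(E_n)\lesssim N_n/n\to 0$, and then invoking absolute continuity of the integral of the fixed $L^1$ function $(\mathcal{M}^\alpha(\gamma))^q$ --- equi-integrability over shrinking sets replaces pointwise domination. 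For $|j-i|>N_n$ it runs the quantitative difference estimates $A_{i,j},B_{i,j},C_{i,j},D_{i,j}$ from the proof of Theorem \ref{orderinscribed}, now with bounds expressed in terms of $\varepsilon_n$ and negative powers of $n$, together with the lower bound $|\gamma(b_j)-\gamma(b_i)|\gtrsim N_n/n$. If you want to salvage your outline, the fix is exactly this: replace the fixed majorant by an $n$-dependent splitting in which the near-diagonal region has measure tending to zero, so that only absolute continuity of a single $L^1$ function is needed.
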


\begin{rem}
Since it holds that
\[
	C^{1,1} (\mathbb{S}_L,\mathbb{R}^d)
	\subset
	W^{ 1+\sigma , 2q } (\mathbb{S}_L,\mathbb{R}^d),
\]
then
$\gamma$ in Theorem \ref{orderinscribed} has always bounded energy,
i.e., $\mathcal{E}^{\alpha ,q} (\gamma) < \infty$.
\end{rem}


\subsection{Lemmas}

In this subsection,
we prove estimates and properties of parameters of curves and polygons in preparation for our proofs of Theorems \ref{orderinscribed} and \ref{inscribed}.

First,
we observe the \textit{bi-Lipschitz continuity} property of curves with bounded energy.

\begin{lem}[$\mbox{\cite[Lemma 2.1]{Bla}}$]
Let
$ \gamma \in C^{0,1} ( \mathbb{S}_L , \mathbb{R}^d ) $
satisfy
$ \mathcal{E}^{\alpha ,q} (\gamma) < \infty $.
Then,
there exists
$ C_b > 0 $
such that
\begin{equation}
	| t-s | \leq C_b | \gamma (t) - \gamma (s) |
	\label{bi-Lip}
\end{equation}
for
$ s $,
$ t \in \mathbb{S}_L $.
\end{lem}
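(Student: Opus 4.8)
The plan is to exploit the two features that make O'Hara's energy a useful knot energy: the integrand $\mathcal{M}^\alpha (\gamma)$ is pointwise nonnegative (indeed $\mathcal{M}^\alpha \ge 0$ since the chord never exceeds the arc), and it blows up precisely where two arcs that are far apart \emph{along} the curve come close in $\mathbb{R}^d$. Writing $d(s,t) := |\gamma(t)-\gamma(s)|$ for the chordal distance and $\Delta(s,t) := D(\gamma(s),\gamma(t))$ for the intrinsic distance (so that, in the paper's shorthand, the asserted inequality reads $\Delta(s,t) \le C_b\, d(s,t)$), one direction is free: since $\gamma$ is parametrized by arc length it is $1$-Lipschitz, hence $d \le \Delta$, and only the reverse bound has content. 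I would prove $\Delta \le C_b d$ by contraposition, showing that a pair of parameters with $\Delta(s,t)/d(s,t)$ large forces a large lower bound on $\mathcal{E}^{\alpha ,q}(\gamma)$.

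The mechanism is a localization estimate. Fix $s,t$ and set $d = d(s,t)$, $\Delta = \Delta(s,t)$, $M := \Delta/d$. Because $\gamma$ is $1$-Lipschitz and the intrinsic distance is $1$-Lipschitz in each variable, for parameters with $|s'-s|, |t'-t| \le r$ one has $d(s',t') \le d + 2r$ and $\Delta(s',t') \ge \Delta - 2r$. Hence on a parameter box of side $\sim r$ around $(s,t)$, provided $d \lesssim r \lesssim \Delta$, the chord stays $\lesssim r$ while the intrinsic distance stays $\gtrsim \Delta$, so that $\mathcal{M}^\alpha (\gamma)(s',t') \ge (3r)^{-\alpha} - (\Delta/2)^{-\alpha} \ge c_\alpha r^{-\alpha}$ once $M$ exceeds a fixed threshold. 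Integrating $(\mathcal{M}^\alpha)^q$ over such a box contributes $\gtrsim r^2 \cdot r^{-\alpha q} = r^{2-\alpha q}$ to the energy (the prefactor $\frac1\alpha L^{\alpha q-2}$ and the comparison constants being absorbed).

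When $\alpha q > 2$ this already suffices: taking $r \sim d$ gives a contribution $\gtrsim d^{2-\alpha q}$, which, since $\mathcal{E}^{\alpha ,q}(\gamma) =: E < \infty$, forces $d \ge d_0(E,\alpha,q,L) > 0$ whenever $M$ is large; combined with the trivial bound $\Delta \le L/2$ this yields $\Delta/d \le C_b$ with $C_b$ depending only on $E$, $\alpha$, $q$, $L$. The delicate case is the scale-invariant threshold $\alpha q = 2$, where a single box contributes only an $O(1)$ amount and no contradiction is obtained. Here I would run the estimate over the dyadic scales $r_j = 2^j d$, $j = 1,\ldots,J$ with $2^J d \sim \Delta$ (so $J \sim \log_2 M$), using the \emph{disjoint} product annuli
\[
	R_j := \{(s',t') : |s'-s| \le r_j,\ |t'-t| \le r_j\} \setminus \{(s',t') : |s'-s| \le r_{j-1},\ |t'-t| \le r_{j-1}\},
\]
each of measure $\sim r_j^2$ and each contributing $\gtrsim r_j^{2-\alpha q} = 1$. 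Summing the disjoint contributions gives $E \gtrsim J \sim \log M$, whence $M \le C_b$ with $C_b$ growing (exponentially) in $E$.

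I expect the main obstacle to be exactly this borderline case $\alpha q = 2$: the naive one-scale bound is scale invariant and therefore useless, and the point is to arrange genuinely disjoint regions across logarithmically many scales so that their contributions add rather than overlap. Once the dyadic bookkeeping is set up carefully (the measures of the annuli $R_j$, the uniform lower bound $\mathcal{M}^\alpha \gtrsim r_j^{-\alpha}$ on each, and the range $r_j \lesssim \Delta$ on which the intrinsic term is negligible), the conclusion follows by collecting the two cases into a single constant $C_b = C_b(E,\alpha,q,L)$, and the trivial inequality $d \le \Delta$ completes the bi-Lipschitz equivalence.
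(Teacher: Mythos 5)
The paper does not actually prove this lemma: it is imported verbatim from Blatt \cite[Lemma 2.1]{Bla}, so there is no in-paper argument to compare against. Your proposal is a correct, self-contained proof, and it is essentially the classical localization argument (going back to O'Hara and Freedman--He--Wang for $\alpha q=2$) that underlies the cited result: nonnegativity of $\mathcal{M}^\alpha(\gamma)$ lets you discard all of the double integral except a parameter box around a ``bad'' pair $(s,t)$, the $1$-Lipschitz bounds $d(s',t')\le d+2r$ and $\Delta(s',t')\ge \Delta-2r$ give $\mathcal{M}^\alpha\gtrsim r^{-\alpha}$ on that box for $d\lesssim r\lesssim \Delta$, and the single-scale bound $r^{2-\alpha q}$ (for $\alpha q>2$) respectively the sum over $\sim\log_2(\Delta/d)$ disjoint dyadic annuli (for $\alpha q=2$) forces $\Delta/d$ to be controlled by the energy. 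The bookkeeping checks out: the annuli $R_j$ have measure $3r_j^2$, they stay away from the diagonal since $\Delta(s',t')\ge\Delta/2>0$ there, and no wraparound on $\mathbb{S}_L$ occurs because $r\lesssim\Delta\le L/2$ with a small constant. Two small points worth making explicit if you write this up: the hypothesis $\alpha q\ge 2$ (a standing assumption in the paper, though not restated in the lemma) is genuinely needed --- for $\alpha q<2$ the box contribution $r^{2-\alpha q}$ vanishes as $r\to 0$ and the statement is false --- and by scale invariance of both $\mathcal{E}^{\alpha,q}$ and the ratio $\Delta/d$, your constant $C_b$ in fact depends only on $\mathcal{E}^{\alpha,q}(\gamma)$, $\alpha$, and $q$, not on $L$.
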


Next, we give the parametrization of an inscribed polygon.
For a division
$ \{ b_k \}_{k=1}^n $
on
$ \mathbb{S}_L $,
let
$ p_n $
be the inscribed polygon in $\gamma$
with vertices
$ \gamma (b_k) \ ( k=1 , \ldots ,n ) $.
We extend the notation
$ \gamma (b_k) $
to all
$ k \in \mathbb{Z} $
in the natural way via congruency modulo
$n$;
i.e.,
$\gamma(b_0) = \gamma(b_n)$,
$\gamma(b_1) = \gamma(b_{n+1})$,
and so on.
Let
$
	\tilde{L}_n
	= \sum_{k=1}^n 
	| \gamma ( b_{k+1} ) - \gamma ( b_k ) |
$
be the length of
$p_n$.
Set
$
	a_i = \sum_{k=0}^{i-1} 
	| \gamma ( b_{k+1} ) - \gamma ( b_k ) |
$
as the value of the arc-length parameter of the $i$-th vertex of $ p_n $.
Then,
note that
\begin{equation}
	| b_j - b_i |
	\geq | a_j - a_i |
	\geq
	| \gamma ( b_j ) - \gamma ( b_i ) |
	\geq C_b^{ -1 } | b_j - b_i |.
	\label{ab--bi.Lip.}
\end{equation}

In what follows,
we set
$
	\displaystyle
	N:= 4 C_b \frac{ \bar{c} }{ c }
$.
We get the following lemma by the triangle inequality.

\begin{lem}
Let
$ t \in [ b_j , b_{j+1} ] $,
$ s \in [ b_i , b_{i+1} ] $.
Then, we have
\begin{equation}
	| t - s |
	\leq 
	\left( 
	1 + 2C_b \frac{ \bar{c} }{ c }
	\right)
	| b_j - b_i |.
	\label{tsleqbjbi}
\end{equation}
In addition,
if
$ | j - i | \geq N $,
we have
\begin{equation}
	| t - s | 
	\geq
	C_b^{ -1 } \frac{ c }{ 2\bar{c} } | b_j - b_i |.
	\label{tsgeqbjbi}
\end{equation}
\end{lem}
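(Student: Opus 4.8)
The plan is to obtain both inequalities from the triangle inequality for the intrinsic distance $D(\cdot,\cdot)$ on $\mathbb{S}_L$ (written $|\cdot-\cdot|$), after first converting the chord bounds in the division hypothesis \eqref{div.bk} into arc-length bounds for a single edge. Since $\gamma$ is parametrized by arc length we have chord $\le$ arc, so combining \eqref{div.bk} with the bi-Lipschitz estimate \eqref{bi-Lip} yields, for each $k$ (and $n$ large enough that every edge is a minimizing arc on $\mathbb{S}_L$, i.e. $C_b \bar{c} L/n < L/2$),
\[
	\frac{cL}{n} \le |\gamma(b_{k+1}) - \gamma(b_k)| \le |b_{k+1} - b_k| \le C_b |\gamma(b_{k+1}) - \gamma(b_k)| \le \frac{C_b \bar{c} L}{n}.
\]
In particular, if $s \in [b_i, b_{i+1}]$ then $|s - b_i| \le |b_{i+1} - b_i| \le C_b \bar{c} L/n$, and likewise $|t - b_j| \le C_b \bar{c} L/n$ for $t \in [b_j, b_{j+1}]$. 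These two facts about $s$ and $t$ are all that the argument needs.

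For the upper bound \eqref{tsleqbjbi}, I would apply the triangle inequality
\[
	|t - s| \le |s - b_i| + |b_j - b_i| + |t - b_j| \le |b_j - b_i| + \frac{2 C_b \bar{c} L}{n}.
\]
To absorb the additive error, note that since $i \ne j$ the minimizing arc realizing $|b_j - b_i|$ contains at least one edge, so $|b_j - b_i| \ge cL/n$, i.e. $L/n \le |b_j - b_i|/c$. Substituting this into the previous display gives $|t - s| \le \left(1 + 2 C_b \bar{c}/c\right)|b_j - b_i|$, which is exactly \eqref{tsleqbjbi}.

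For the lower bound \eqref{tsgeqbjbi}, I would use the reverse form of the same estimate,
\[
	|t - s| \ge |b_j - b_i| - |s - b_i| - |t - b_j| \ge |b_j - b_i| - \frac{2 C_b \bar{c} L}{n}.
\]
Here the hypothesis $|j - i| \ge N = 4 C_b \bar{c}/c$ enters: the minimizing arc between $\gamma(b_i)$ and $\gamma(b_j)$ consists of at least $N$ edges, so $|b_j - b_i| \ge N \cdot cL/n = 4 C_b \bar{c} L/n$, whence $2 C_b \bar{c} L/n \le \tfrac12 |b_j - b_i|$. This leaves $|t - s| \ge \tfrac12 |b_j - b_i| \ge \tfrac{c}{2 C_b \bar{c}}|b_j - b_i|$, the last step using $C_b \ge 1$ and $c \le \bar{c}$; this is \eqref{tsgeqbjbi} (in fact with a slightly sharper constant).

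The estimates themselves are immediate once the single-edge bounds are in place, so the only genuine care is the cyclic bookkeeping. The \emph{main point to get right} is that $|j - i| \ge N$ must be read as a bound on the number of edges of the \emph{minimizing} arc between the two points (the cyclic distance $\min(|j-i|, n-|j-i|)$): otherwise the shorter arc could wrap the other way with fewer than $N$ edges and the subtraction in the lower bound would fail to be positive. One also needs $n$ large enough that each edge is itself a minimizing arc, so that ``$s$ lies on the edge $[b_i,b_{i+1}]$'' really gives $|s-b_i|\le |b_{i+1}-b_i|$ in terms of the intrinsic distance. With these conventions fixed, both displays follow by direct substitution, exactly as anticipated by the remark that the lemma ``follows from the triangle inequality.''
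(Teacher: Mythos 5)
Your proof is correct and follows exactly the route the paper intends: the paper omits the proof, stating only that the lemma ``follows from the triangle inequality,'' and your argument (single-edge arc-length bounds from \eqref{div.bk} and \eqref{bi-Lip}, then the triangle inequality and its reverse, with $|b_j-b_i|\ge cL/n$ resp.\ $|b_j-b_i|\ge NcL/n$ to absorb the error terms) is precisely that argument, carried out with the correct cyclic reading of $|j-i|$. Your constant in \eqref{tsgeqbjbi} is in fact slightly sharper than the one stated.
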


In the next lemma,
we calculate the difference between the arc-length and the distance of two points.

\begin{lem}
\begin{enumerate}
\item
Let
$ \gamma \in C^{1,1} ( \mathbb{S}_L , \mathbb{R}^d ) $,
and
$ s $, $ t \in \mathbb{S}_L $.
Then, we have
\begin{equation}
	0 \leq | t-s |^2 - | \gamma (t) - \gamma (s) |^2
	\leq
	\frac{K^2}{2} | t-s |^4.
	\label{ts--est.}
\end{equation}
\item
Let
$ q \in [1, \infty ) $,
$ \gamma \in C^{0,1} ( \mathbb{S}_L , \mathbb{R}^d ) $
and let
$ s $,
$ t \in \mathbb{S}_L $.
Then, we have
\begin{eqnarray}
	\lefteqn{
	| t-s | - | \gamma (t) - \gamma (s) |
	}
\nonumber\\
	& \leq &
	\frac{1}{2}
	| t-s |^{ \alpha + 1 - 2/q }
	\left(
	\int_s^t \int_s^t
	\frac
	{ | \gamma ' (v) - \gamma ' (u) |^{ 2q } }
	{ | v-u |^{ \alpha q } }  dudv
	\right)^{ 1/q }.
	\label{ts--int.est.}
\end{eqnarray}
\end{enumerate}
\end{lem}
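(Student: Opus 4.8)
The plan is to derive both estimates from a single pointwise identity coming from the fact that $\gamma$ is parametrized by arc-length, so that $|\gamma'(u)| = 1$ for a.e.\ $u$. Assume without loss of generality that $s < t$ and that the arc from $s$ to $t$ realizes the intrinsic distance, so that $|t-s| = t-s$ and $\gamma(t) - \gamma(s) = \int_s^t \gamma'(u)\,du$. Expanding the squared chord length as a double integral and using the polarization identity $\gamma'(u)\cdot\gamma'(v) = 1 - \tfrac12|\gamma'(u) - \gamma'(v)|^2$, which holds a.e.\ because $|\gamma'| \equiv 1$, I would first establish the master identity
\[
	|t-s|^2 - |\gamma(t) - \gamma(s)|^2 = \frac12 \int_s^t \int_s^t |\gamma'(u) - \gamma'(v)|^2 \, du\, dv.
\]
Both parts of the lemma then reduce to estimating the double integral on the right.

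For part (1), the identity immediately shows $|t-s|^2 - |\gamma(t)-\gamma(s)|^2 \geq 0$, which is the left inequality. For the right inequality I would use that $\gamma \in C^{1,1}$ forces $\gamma'$ to be Lipschitz with constant $K = \|\gamma''\|_{L^\infty}$, so $|\gamma'(u) - \gamma'(v)| \leq K|u-v| \leq K|t-s|$ for $u,v \in [s,t]$. Substituting the crude bound $|\gamma'(u)-\gamma'(v)|^2 \leq K^2|t-s|^2$ into the master identity and integrating the constant over $[s,t]^2$ yields exactly $\tfrac{K^2}{2}|t-s|^4$.

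For part (2), I would factor the difference of squares,
\[
	|t-s| - |\gamma(t)-\gamma(s)| = \frac{|t-s|^2 - |\gamma(t)-\gamma(s)|^2}{|t-s| + |\gamma(t)-\gamma(s)|} \leq \frac{1}{|t-s|}\bigl(|t-s|^2 - |\gamma(t)-\gamma(s)|^2\bigr),
\]
where the inequality bounds the denominator below by $|t-s|$ using $|\gamma(t)-\gamma(s)| \geq 0$. Combining this with the master identity reduces the claim to controlling $\frac{1}{2|t-s|}\int_s^t\int_s^t|\gamma'(u)-\gamma'(v)|^2\,du\,dv$. The key step is Hölder's inequality with exponents $q$ and $q' = q/(q-1)$, applied after inserting the factor $|u-v|^{-\alpha}|u-v|^{\alpha}$:
\[
	\int_s^t\int_s^t|\gamma'(u)-\gamma'(v)|^2\,du\,dv \leq \left(\int_s^t\int_s^t \frac{|\gamma'(u)-\gamma'(v)|^{2q}}{|u-v|^{\alpha q}}\,du\,dv\right)^{1/q}\left(\int_s^t\int_s^t |u-v|^{\alpha q'}\,du\,dv\right)^{1/q'}.
\]
A direct computation gives $\int_s^t\int_s^t |u-v|^{\beta}\,du\,dv = \frac{2|t-s|^{\beta+2}}{(\beta+1)(\beta+2)}$, so with $\beta = \alpha q'$ the second factor equals a constant times $|t-s|^{\alpha + 2/q'} = |t-s|^{\alpha + 2 - 2/q}$; after dividing by $2|t-s|$ this produces precisely the exponent $\alpha + 1 - 2/q$ in the statement.

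The only genuinely delicate point is the bookkeeping of the Hölder constant, which must not exceed $1$ so that the overall factor is $\tfrac12$ as claimed. Since $\alpha q' > 0$ one has $(\alpha q'+1)(\alpha q'+2) > 2$, hence $\bigl(2/((\alpha q'+1)(\alpha q'+2))\bigr)^{1/q'} < 1$, which closes the estimate for $q > 1$. The endpoint $q = 1$ (where $q' = \infty$) should be handled separately: there Hölder degenerates into $\int_s^t\int_s^t|\gamma'(u)-\gamma'(v)|^2\,du\,dv \leq |t-s|^\alpha \int_s^t\int_s^t |u-v|^{-\alpha}|\gamma'(u)-\gamma'(v)|^2\,du\,dv$, which gives the constant $\tfrac12$ exactly. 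Everything else is a routine substitution once the master identity is in hand.
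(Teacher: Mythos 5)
Your proposal is correct and follows essentially the same route as the paper: the identity $|t-s|^2-|\gamma(t)-\gamma(s)|^2=\tfrac12\int_s^t\int_s^t|\gamma'(v)-\gamma'(u)|^2\,du\,dv$ (from $|\gamma'|\equiv 1$ a.e.), the bound $|t-s|-|\gamma(t)-\gamma(s)|\leq\bigl(|t-s|^2-|\gamma(t)-\gamma(s)|^2\bigr)/|t-s|$, and H\"older with exponents $q$ and $q/(q-1)$ (with the $q=1$ case handled separately). The only cosmetic difference is that you evaluate the second H\"older factor exactly, whereas the paper just uses the crude bound $|v-u|\leq|t-s|$ over the square $[s,t]^2$; both give a constant at most $1$, so the factor $\tfrac12$ survives either way.
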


\begin{proof}
We only prove (\ref{ts--int.est.}).
In the case where
$ q=1 $,
we get
\begin{align*}
	| t-s | - | \gamma(t) - \gamma(s) |
	& \leq \,
	\frac{ | t-s |^2 - | \gamma(t) - \gamma(s) |^2 }{ | t-s | }
\\
	& = \,
	\frac{ 1 }{ 2 | t-s | }
	\int_s^t \int_s^t 
	| \gamma ' (v) - \gamma ' (u) |^2  dudv
\\
	& \leq \,
	\frac{ 1 }{ 2 } | t-s |^{\alpha -1}
	\int_s^t \int_s^t 
	\frac{| \gamma ' (v) - \gamma ' (u) |^2}{| v-u |^\alpha}
	dudv.
\end{align*}
On the other hand,
in the case where
$ q \in (1, \infty ) $,
we get
\begin{align*}
	&\ | t-s | - | \gamma(t) - \gamma(s) |
\\
	\leq & \ 
	\frac{ 1 }{ 2 | t-s | }
	\left(
	\int_s^t \int_s^t 
	\frac{ | \gamma ' (v) - \gamma ' (u) |^{2q} }{ |v-u|^{\alpha q} } dudv	
	\right)
	^{1/q}
	\left(
	\int_s^t \int_s^t
	| v-u |^{ \frac{ \alpha q}{ q-1} } dudv
	\right)^{1-1/q}
\\
	\leq &\ 
	\frac{ 1 }{ 2 }
	| t-s |^{\alpha + 1 - 2/q}
	\left(
	\int_s^t \int_s^t 
	\frac{ | \gamma ' (v) - \gamma ' (u) |^{2q} }{ |v-u|^{\alpha q} } dudv	
	\right)
	^{1/q}
\end{align*}
by H\"{o}lder's inequality.
\end{proof}

The following lemma is proved by simple calculations, hence, we omit the proof.

\begin{lem}

\begin{enumerate}
	\item Let $ 0 < \alpha \leq 2 $.
	Then we have
	\begin{equation}
		1 -x^\alpha \leq (1-x^2)^{ \alpha /2}
		\label{alpha2less1-xest.}
	\end{equation}
	for all $ 0 \leq x \leq 1 $.
	\item Let $ a > 0 $.
	Then, we have
	\begin{equation}
		1 - x^a \leq ( a+1 )( 1-x ).
	\label{1-xest.}
	\end{equation}
	for all $ 0 \leq x \leq 1 $.
\end{enumerate}

\end{lem}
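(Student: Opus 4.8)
The two inequalities are independent, so I would prove them separately; each reduces to an elementary one-variable estimate.

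For (\ref{alpha2less1-xest.}), the plan is to substitute $y = x^2 \in [0,1]$ and write $\beta := \alpha/2 \in (0,1]$, which turns the claim into $1 - y^\beta \leq (1-y)^\beta$, equivalently
\[
	(1-y)^\beta + y^\beta \geq 1
	\qquad (0 \leq y \leq 1,\ 0 < \beta \leq 1).
\]
I would then invoke the subadditivity of $u \mapsto u^\beta$ on $[0,\infty)$, which holds because this map is concave with value $0$ at the origin: applying it to $u = 1-y$ and $v = y$ gives $1 = \big((1-y)+y\big)^\beta \leq (1-y)^\beta + y^\beta$, which is precisely the display. Equivalently one can note that $f(y) := (1-y)^\beta + y^\beta$ is concave with $f(0) = f(1) = 1$, so $f \geq 1$ on $[0,1]$ by the chord bound.

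For (\ref{1-xest.}), I would set $g(x) := (a+1)(1-x) - (1 - x^a)$ and aim to show $g \geq 0$ on $[0,1]$, observing $g(1) = 0$ and $g(0) = a > 0$. The sign of $g''(x) = a(a-1)x^{a-2}$ changes at $a = 1$, and this is the only point that needs attention, so I would split there. For $a \geq 1$ I would write $1 - x^a = \int_x^1 a u^{a-1}\,du \leq a(1-x)$, using $u^{a-1} \leq 1$ on $[x,1]$, and then $a(1-x) \leq (a+1)(1-x)$ finishes. For $0 < a < 1$ the integrand is no longer bounded by $1$; instead I would use $x^a \geq x$ on $[0,1]$ (since $(a-1)\log x \geq 0$ there), so that $1 - x^a \leq 1 - x \leq (a+1)(1-x)$.

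There is no genuine obstacle here, as the statement records. The only thing to watch is that no single convexity or concavity argument covers the whole range $a > 0$ in (\ref{1-xest.}); this is why the brief case distinction at $a = 1$, trading the integral bound for the pointwise comparison $x^a \geq x$, is the cleanest route.
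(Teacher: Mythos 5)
Your proof is correct. The paper itself omits the argument for this lemma, stating only that it follows from ``simple calculations,'' so there is no proof in the source to compare against; your write-up is a valid and complete filling-in of that gap. Both steps check out: the substitution $y=x^2$, $\beta=\alpha/2$ reduces (\ref{alpha2less1-xest.}) to the subadditivity of $u\mapsto u^\beta$ for $\beta\in(0,1]$, and the case split at $a=1$ in (\ref{1-xest.}) (the integral bound $1-x^a\le a(1-x)$ for $a\ge 1$, the pointwise comparison $x^a\ge x$ for $0<a<1$) handles the full range $a>0$.
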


Finally,
we have the following lemma,
which may be proved by using (\ref{ts--est.}), (\ref{alpha2less1-xest.}), and (\ref{1-xest.}).

\begin{lem}

\begin{enumerate}
\item
Let
$ \alpha > 0$.
Then we have
\begin{equation}
	| t-s |^\alpha - | \gamma (t) - \gamma (s) |^\alpha
	\leq
	\left( \frac \alpha 2 +1 \right)
	\frac{ K^2 }{ 2 } | t-s |^{ \alpha +2}
	\label{alpha2more--est.}
\end{equation}
for all
$s$,
$t \in \mathbb{S}_L$.
\item
Let
$0 < \alpha \leq 2$.
Then we have
\begin{equation}
	| t-s |^\alpha - | \gamma (t) - \gamma (s) |^\alpha
	\leq
	\frac{ K^{ \alpha } }{ 2^{ \alpha/2 }} | t-s |^{ 2\alpha }
	\label{alpha2less--est.}
\end{equation}
for all
$s$,
$t \in \mathbb{S}_L$.
\end{enumerate}
\end{lem}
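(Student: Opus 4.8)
The plan is to reduce both inequalities to the three elementary estimates already at hand by normalizing the chord against the arc. Writing $a := |t-s|$ and $b := |\gamma(t)-\gamma(s)|$, an arc-length parametrization gives $0 \le b \le a$, so on the nontrivial range $a > 0$ I set $x := b/a \in [0,1]$; the case $a=0$ is trivial since both sides vanish. Then
\[
	|t-s|^\alpha - |\gamma(t)-\gamma(s)|^\alpha = a^\alpha\bigl(1 - x^\alpha\bigr),
\]
so everything reduces to bounding $1 - x^\alpha$. The crucial input is \eqref{ts--est.}: dividing through by $a^2$ it reads $0 \le 1 - x^2 \le \tfrac{K^2}{2}\,a^2$, so I have clean quadratic control of $1-x^2$, and the remaining task is to convert this into control of $1-x^\alpha$ in each of the two regimes.

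For part 1 ($\alpha>0$) the trick is to pass through the square. I would write $x^\alpha = (x^2)^{\alpha/2}$ and apply \eqref{1-xest.} with exponent $a=\alpha/2$ to the variable $x^2 \in [0,1]$, obtaining $1 - x^\alpha = 1 - (x^2)^{\alpha/2} \le \bigl(\tfrac{\alpha}{2}+1\bigr)(1 - x^2)$. Substituting the bound on $1-x^2$ from \eqref{ts--est.} and multiplying by $a^\alpha$ yields exactly $\bigl(\tfrac{\alpha}{2}+1\bigr)\tfrac{K^2}{2}\,a^{\alpha+2}$. It is worth noting that applying \eqref{1-xest.} directly to $x$ with exponent $\alpha$ would give only the weaker constant $\alpha+1$; routing through $x^2$ is precisely what produces the sharper factor $\tfrac{\alpha}{2}+1$ and also matches the quadratic decay supplied by \eqref{ts--est.}.

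For part 2 ($0<\alpha\le 2$) I would instead invoke \eqref{alpha2less1-xest.}, which gives $1 - x^\alpha \le (1-x^2)^{\alpha/2}$ directly on $[0,1]$. Since $t \mapsto t^{\alpha/2}$ is increasing and both sides are nonnegative, raising $1-x^2 \le \tfrac{K^2}{2}\,a^2$ to the power $\alpha/2$ is legitimate and gives $(1-x^2)^{\alpha/2} \le \tfrac{K^\alpha}{2^{\alpha/2}}\,a^\alpha$. Multiplying by $a^\alpha$ then produces the stated $\tfrac{K^\alpha}{2^{\alpha/2}}\,a^{2\alpha}$.

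There is no real obstacle here: the entire content sits in the three cited inequalities, and the only genuine decision is which monotonicity step to feed \eqref{ts--est.} into — the $x^2$-substitution combined with \eqref{1-xest.} for general $\alpha$, versus the sharper pointwise bound \eqref{alpha2less1-xest.} when $\alpha \le 2$. The sole point requiring a word of care is treating the degenerate case $|t-s|=0$ separately so that the normalization $x = b/a$ is justified.
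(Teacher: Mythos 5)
Your proof is correct and follows exactly the route the paper indicates (the paper omits the details but states that the lemma "may be proved by using (\ref{ts--est.}), (\ref{alpha2less1-xest.}), and (\ref{1-xest.})", which is precisely the combination you use, including the key step of applying (\ref{1-xest.}) to $x^2$ with exponent $\alpha/2$ to get the constant $\frac{\alpha}{2}+1$). Nothing further is needed.
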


In subsections \ref{pforderinscribed} and \ref{pfinscribed},
unless otherwise noted,
we assume that
$\alpha \in (0,\infty)$
and
$q \in [1,\infty)$
satisfy
$2 \leq \alpha q < 2q+1$.


\subsection{Proof of Theorem \ref{orderinscribed}}\label{pforderinscribed}

Firstly,
we have
\begin{eqnarray*}
	\lefteqn{
	| \mathcal{E}^{\alpha ,q} (\gamma) - \mathcal{E}_n^{\alpha ,q} (p_n) |
	}
\\
	& \leq &
	\sum_{i=1}^n \sum_{|j-i| \leq N}
	\int_{b_j}^{b_{j+1}} \int_{b_i}^{b_{i+1}}
	\left|
	\Mag^q - \Manp^q
	\right|
	dsdt
\\
	& &
	+\sum_{i=1}^n \sum_{|j-i| >N}
	\int_{b_j}^{b_{j+1}} \int_{b_i}^{b_{i+1}}
	\left|
	\Mag^q - \Manp^q
	\right|
	dsdt,
\end{eqnarray*}
where
$ \displaystyle \sum_{|j-i| \leq N} $
and
$ \displaystyle \sum_{|j-i| > N} $
are summations with respect to $j$ with
$ |j-i| \leq N $ and $ |j-i| > N $ for each $i=1, \ldots ,n$ respectively.
In what follows, we estimate each of them.

\subsubsection{Estimates for the case where $ |j-i| \leq N $}

\begin{prop}\label{estleqN}

We have
\begin{align*}
	& \ L^{\alpha q - 2}
	\sum_{ i=1 }^n \sum_{ | j-i | \leq N }
	\int_{ b_j }^{ b_{ j+1 } } \int_{ b_i }^{ b_{ i+1 } }
	\Mag^q
	dsdt
\\
	\leq & \ 
	\frac{ 
	\left( \alpha  +2 \right)^q C_b^{ \alpha q/2 } 
	\{ \bar{c} (N+1) \}^{2q- \alpha q+2} ( 2N + 1 )
	}
	{ 4^q (2q- \alpha q+1)(2q- \alpha q+2)}
	L^{ 2q } K^{ 2q }
	\frac{1}{ n^{2q- \alpha q+1} }.
\end{align*}
Moreover, if $\alpha \leq 2$, then we have
\[
	L^{\alpha q - 2}
	\sum_{ i=1 }^n \sum_{ | j-i | \leq N }
	\int_{ b_j }^{ b_{ j+1 } } \int_{ b_i }^{ b_{ i+1 } }
	\Mag^q
	dsdt
	\leq
	\frac{C_b^{\alpha q+1} (2N+1) \bar{c}}{2^{\alpha q/2}}
	L^{\alpha q} K^{\alpha q}
	\frac 1 n.
\]

\end{prop}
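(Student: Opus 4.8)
The plan is to reduce the double sum to a single pointwise upper bound on the integrand $\Mag$ and then integrate the resulting power of $|t-s|$ cell by cell. Since here $D(\gamma(s),\gamma(t))=|t-s|$ and the chord never exceeds the arc, $\Mag\ge 0$ and
\[
	\Mag
	=
	\frac{|t-s|^\alpha-|\gamma(t)-\gamma(s)|^\alpha}{|\gamma(t)-\gamma(s)|^\alpha\,|t-s|^\alpha}.
\]
First I would bound the numerator by (\ref{alpha2more--est.}) and bound the denominator from below by replacing $|\gamma(t)-\gamma(s)|$ with a multiple of $|t-s|$ via the bi-Lipschitz inequality (\ref{bi-Lip}); this collapses the quotient to a single monomial
\[
	\Mag\le\Bigl(\tfrac{\alpha}{2}+1\Bigr)\tfrac{K^2}{2}\,C_b^{\,\alpha}\,|t-s|^{2-\alpha},
\]
so that, writing $\beta:=(2-\alpha)q=2q-\alpha q$,
\[
	\Mag^q\le\frac{(\alpha+2)^q}{4^q}\,K^{2q}\,|t-s|^{\beta}
\]
up to a power of $C_b$. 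For the second assertion I would instead feed the sharper estimate (\ref{alpha2less--est.}), valid only for $\alpha\le 2$, into the same quotient; there the power of $|t-s|$ cancels entirely and $\Mag^q$ is dominated by a pure constant of size $K^{\alpha q}/2^{\alpha q/2}$ (again times a power of $C_b$), which is what will produce the faster $1/n$ rate.

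Next I would carry out the cell integration. Applying (\ref{bi-Lip}) to consecutive division points together with (\ref{div.bk}), every arc-length gap satisfies $b_{k+1}-b_k\le C_b|\gamma(b_{k+1})-\gamma(b_k)|\le C_b\bar{c}L/n$; and for $|j-i|\le N$ the pair $(s,t)$ sweeps at most $N+1$ consecutive gaps, whence $|t-s|\lesssim (N+1)\bar{c}L/n$. I would then integrate $|t-s|^{\beta}$ over a single cell $[b_i,b_{i+1}]\times[b_j,b_{j+1}]$ and sum: the off-diagonal cells are controlled by the window size times the cell areas $h_ih_j$, while for the diagonal cell $j=i$ one uses the exact identity
\[
	\int_{b_i}^{b_{i+1}}\!\!\int_{b_i}^{b_{i+1}}|t-s|^{\beta}\,ds\,dt
	=\frac{2}{(\beta+1)(\beta+2)}\,(b_{i+1}-b_i)^{\beta+2},
\]
which is exactly where the factor $1/\{(2q-\alpha q+1)(2q-\alpha q+2)\}$ is born. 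Counting the $2N+1$ admissible indices $j$ for each $i$, telescoping one factor of the cell width against $\sum_i(b_{i+1}-b_i)=L$, and finally multiplying by $L^{\alpha q-2}$ (so that $L^{\alpha q-2}\cdot L^{\beta+2}=L^{2q}$), collects all the constants into the asserted bound with rate $n^{-(2q-\alpha q+1)}$. For the $\alpha\le 2$ statement the integrand is constant, the band ``area'' is at most $L\cdot(2N+1)\,C_b\bar{c}L/n$, and the same multiplication by $L^{\alpha q-2}$ yields the $1/n$ bound.

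The hard part will be the behaviour on and near the diagonal. When $\alpha>2$ the exponent $\beta=2q-\alpha q$ is negative, so $|t-s|^{\beta}$ is singular along $s=t$ and a naive cellwise supremum bound fails; it is precisely the hypothesis $\alpha q<2q+1$, equivalently $\beta>-1$, that makes $\int\!\!\int|t-s|^{\beta}$ convergent and supplies the denominator $(2q-\alpha q+1)(2q-\alpha q+2)$. Securing the sharp power of $n$ therefore hinges on estimating the diagonal cell by its exact value $\sim(b_{i+1}-b_i)^{\beta+2}$ rather than by a crude maximum, and on tracking carefully which single factor of the cell width is telescoped against $\sum_i(b_{i+1}-b_i)=L$; the remaining factors, each comparable to $\bar{c}L/n$, then combine to give exactly $n^{-(2q-\alpha q+1)}$, with the leftover $N$-dependence appearing as the stated $\{\bar{c}(N+1)\}^{2q-\alpha q+2}(2N+1)$.
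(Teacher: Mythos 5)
Your proposal is correct and follows essentially the same route as the paper: bound $|\gamma(t)-\gamma(s)|^{-\alpha q}$ by $C_b^{\alpha q}|t-s|^{-\alpha q}$ via (\ref{bi-Lip}), control the numerator by (\ref{alpha2more--est.}) (resp.\ (\ref{alpha2less--est.}) for $\alpha\le 2$) to reduce to $\int\!\!\int |t-s|^{2q-\alpha q}\,dsdt$ (resp.\ a constant integrand), and then integrate over the band $|j-i|\le N$ using (\ref{div.bk}). Your explicit attention to the diagonal cells and to the integrability condition $2q-\alpha q>-1$ is exactly what the paper's final inequality implicitly uses.
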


\begin{proof}
We have
\begin{eqnarray*}
	\lefteqn{
	L^{\alpha q - 2}
	\sum_{ i=1 }^n \sum_{ | j-i | \leq N }
	\int_{ b_j }^{ b_{ j+1 } } \int_{ b_i }^{ b_{ i+1 } }
	\Mag^q
	dsdt} 
\\
	& \overset{(\ref{bi-Lip})}{\leq} & 
	L^{\alpha q - 2}
	C_b^{ \alpha q }
	\sum_{ i=1 }^n \sum_{ | j-i | \leq N }
	\int_{ b_j }^{ b_{ j+1 } } \int_{ b_i }^{ b_{ i+1 } }
	\frac{ ( | t - s |^\alpha - | \gamma (t) - \gamma (s) |^\alpha )^q }
	{ | t-s |^{ 2 \alpha q } }
	dsdt 
\\
	& \overset{(\ref{alpha2more--est.})}{\leq} &
	L^{\alpha q - 2}
	C_b^{ \alpha q }
	\left( \frac \alpha 2 +1 \right)^q
	\frac{ K^{ 2q } }{ 2^q }
	\sum_{ i=1 }^n \sum_{ | j-i | \leq N }
	\int_{ b_j }^{ b_{ j+1 } } \int_{ b_i }^{ b_{ i+1 } }
	| t-s |^{ 2q- \alpha q }
	dsdt 
\\
	& \overset{(\ref{div.bk})}{\leq} &
	\frac{ 
	\left( \alpha  +2 \right)^q C_b^{ \alpha q } 
	\{ \bar{c} (N+1) \}^{2q- \alpha q+2} ( 2N + 1 )
	}
	{ 4^q (2q- \alpha q+1)(2q- \alpha q+2)}
	L^{ 2q } K^{ 2q }
	\frac{1}{ n^{2q- \alpha q+1} }.
\end{eqnarray*}

In the case where
$ \alpha \leq 2 $,
we have
\begin{eqnarray*}
	\lefteqn{
	L^{\alpha q - 2}
	\sum_{ i=1 }^n \sum_{ | j-i | \leq N }
	\int_{ b_j }^{ b_{ j+1 } } \int_{ b_i }^{ b_{ i+1 } }
	\Mag^q
	dsdt
	}
\\
	& \overset{(\ref{bi-Lip})}{\leq} & 
	L^{\alpha q - 2}
	C_b^{ \alpha q }
	\sum_{ i=1 }^n \sum_{ | j-i | \leq N }
	\int_{ b_j }^{ b_{ j+1 } } \int_{ b_i }^{ b_{ i+1 } }
	\frac{ ( | t - s |^\alpha - | \gamma (t) - \gamma (s) |^\alpha )^q }
	{ | t-s |^{ 2 \alpha q } }
	dsdt
\\
	& \overset{(\ref{alpha2less--est.})}{\leq} &
	L^{\alpha q - 2}
	C_b^{ \alpha q } \frac{ K^{ \alpha q } }{ 2^{ \alpha q /2}}
	\sum_{ i=1 }^n \sum_{ | j-i | \leq N }
	| b_{j+1} - b_j | | b_{ i+1 } - b_i |
\\
	& \overset{(\ref{div.bk})}{\leq} &
	\frac{ C_b^{ \alpha q +1 } (2N+1) \bar{c} }{ 2^{ \alpha q/2 }}
	L^{ \alpha q } K^{ \alpha q }
	\frac{ 1 }{ n }.
\end{eqnarray*}
\end{proof}

The following proposition is proved by the same calculations as those in the proof of Proposition \ref{estleqN}.

\begin{prop}\label{estleqN2}
We have
\begin{align*}
	&\ 
	\tilde{L}_n^{\alpha q - 2}
	\sum_{ i=1 }^n \sum_{ | j-i | \leq N }
	\Manp^q
	| \gamma ( b_{ i+1 } ) - \gamma ( b_i ) |
	| \gamma ( b_{ j+1 } ) - \gamma ( b_j ) |
\\
	\leq &\ 
	\frac{
	( \alpha +2 )^q C_b^{ \alpha q } 
	c^{2q- \alpha q} \bar{c}^2 ( 2N+1 )
	}
	{ 4^q }
	L^{2q} K^{2q}
	\frac{1}{ n^{2q- \alpha q+1} }.
\end{align*}
Moreover, if $\alpha \leq 2 $, then we have
\begin{align*}
	&\ \tilde{L}_n^{\alpha q - 2}
	\sum_{ i=1 }^n \sum_{ | j-i | \leq N }
	\Manp^q
	| \gamma ( b_{ i+1 } ) - \gamma ( b_i ) |
	| \gamma ( b_{ j+1 } ) - \gamma ( b_j ) |
\\
	\leq &\ 
	\frac{ C_b^{ \alpha q } (2N+1) \bar{c} }{ 2^{ \alpha q /2} }
	L^{ \alpha q } K^{ \alpha q }
	\frac{1}{n}.
\end{align*}
\end{prop}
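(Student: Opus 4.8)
The plan is to follow the proof of Proposition \ref{estleqN} essentially verbatim, making one structural substitution: the double integral $\int_{b_j}^{b_{j+1}}\int_{b_i}^{b_{i+1}}(\cdots)\,dsdt$ is replaced by the discrete quantity $(\Manp)^q$ weighted by the chord lengths $|\gamma(b_{i+1})-\gamma(b_i)|\,|\gamma(b_{j+1})-\gamma(b_j)|$. Under the convention $D(p_n(a_i),p_n(a_j))=|a_j-a_i|$ together with $p_n(a_k)=\gamma(b_k)$, I would first rewrite
\[
	\Manp
	=
	\frac{|a_j-a_i|^\alpha - |\gamma(b_j)-\gamma(b_i)|^\alpha}
	{|\gamma(b_j)-\gamma(b_i)|^\alpha\,|a_j-a_i|^\alpha},
\]
so that the polygonal arc-length distance $|a_j-a_i|$ now plays the role of the intrinsic distance $|t-s|$, and the chord $|\gamma(b_j)-\gamma(b_i)|$ the role of $|\gamma(t)-\gamma(s)|$, from the proof of Proposition \ref{estleqN}.

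Next I would control the reciprocal distance factor via the bi-Lipschitz estimate (\ref{bi-Lip}) through the chain (\ref{ab--bi.Lip.}), namely $|\gamma(b_j)-\gamma(b_i)|\geq C_b^{-1}|b_j-b_i|\geq C_b^{-1}|a_j-a_i|$, which trades the chord distances for arc-length distances $|b_j-b_i|$ at the cost of a power of $C_b$. For the numerator I would use $|a_j-a_i|\leq|b_j-b_i|$ (again from (\ref{ab--bi.Lip.})) to pass to $|b_j-b_i|^\alpha - |\gamma(b_j)-\gamma(b_i)|^\alpha$ and then apply (\ref{alpha2more--est.}) with $s=b_i$, $t=b_j$ in general, and the sharper (\ref{alpha2less--est.}) in the refined case $\alpha\le2$, exactly as for the curve. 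Raising to the power $q$ produces a pointwise bound of the form $(\Manp)^q \leq C\,K^{2q}|b_j-b_i|^{2q-\alpha q}$ in general, and $(\Manp)^q\leq C\,K^{\alpha q}$ when $\alpha\le2$ (the power of $|b_j-b_i|$ cancelling in this case). Multiplying by the chord weights (each at most $\bar cL/n$ by (\ref{div.bk})), estimating $|b_j-b_i|$ for $|j-i|\le N$ between $cL/n$ and $NC_b\bar cL/n$ by (\ref{div.bk}) and (\ref{ab--bi.Lip.}), using $\tilde L_n^{\alpha q-2}\le L^{\alpha q-2}$ (valid since $\alpha q\ge2$ and $\tilde L_n\le L$), and summing over the $n$ indices $i$ and the at most $2N+1$ indices $j$, the powers of $n$ combine as $n^{-(2q-\alpha q)}\cdot n^{-2}\cdot n=n^{-(2q-\alpha q+1)}$ in general and as $n^{-1}$ when $\alpha\le2$, yielding the two claimed estimates.

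The only genuinely new feature compared with Proposition \ref{estleqN} is that there is no integration to absorb the size of $|b_j-b_i|$, so the discrete factor $|b_j-b_i|^{2q-\alpha q}$ must be bounded directly. Since $2q-\alpha q$ may have either sign under the hypothesis $2\le\alpha q<2q+1$, the correct choice between the upper bound $|b_j-b_i|\le NC_b\bar cL/n$ and the lower bound $|b_j-b_i|\ge|a_j-a_i|\ge cL/n$ depends on that sign, and this is exactly what produces the factor $c^{2q-\alpha q}$ in the stated constant. This case distinction, together with bookkeeping of the power of $C_b$ generated by the repeated use of (\ref{ab--bi.Lip.}), is the main point requiring care; everything else is identical in structure to the proof of Proposition \ref{estleqN}.
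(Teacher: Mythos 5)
Your proposal is correct and follows exactly the route the paper intends: the paper gives no separate proof of Proposition \ref{estleqN2}, stating only that it follows ``by the same calculations as those in the proof of Proposition \ref{estleqN}'', and your substitutions (writing $\Manp$ as a single fraction, using \pref{ab--bi.Lip.} and \pref{bi-Lip} for the denominator, $|a_j-a_i|\le|b_j-b_i|$ together with \pref{alpha2more--est.} or \pref{alpha2less--est.} at $s=b_i$, $t=b_j$ for the numerator, then \pref{div.bk}) are precisely those calculations. Your explicit attention to the sign of $2q-\alpha q$ when bounding $|b_j-b_i|^{2q-\alpha q}$ is if anything more careful than the paper's bookkeeping of constants, and since the final theorems only assert existence of a constant $C$, the minor discrepancies in powers of $C_b$ and $\bar c$ are immaterial.
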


\subsubsection{Estimates for the case where $ |j-i| > N $}

Note that the difference between the lengths of the curve and its inscribed polygon satisfies
\begin{equation}
	L - \tilde{L}_n
	\leq
	\frac{ \bar{c}^3 C_b^3}{ 2 }
	L^3 K^2
	\frac{ 1 }{ n^2 },
	\label{L-L--est.}
\end{equation}
which follows from (\ref{div.bk}), (\ref{bi-Lip}), and (\ref{ts--est.}).
In order to determine how to prove Theorem \ref{orderinscribed},
we use the following lemma which may be proved by the same calculations as those in the proof of Proposition \ref{estleqN2}.

\begin{lem}\label{line}
We have
\begin{align*}
	&\ 
	\left|
	\sum_{ i=1 }^n \sum_{ | j-i | > N }
	\Manp^q
	| \gamma ( b_{ i+1 } ) - \gamma ( b_i ) |
	| \gamma ( b_{ j+1 } ) - \gamma ( b_j ) |
	\right|
\\
	\leq &\ 
	\frac{ C_b^{\alpha q} ( \alpha  +2 )^q c^{2q-\alpha q} \bar{c}^2 }
	{ 4^q (2q- \alpha q+1) }
	L^{2q- \alpha q+2} K^{2q}.
\end{align*}
Moreover, if $\alpha \leq 2$, we have
\[
	\left|
	\sum_{ i=1 }^n \sum_{ | j-i | > N }
	\Manp^q
	| \gamma ( b_{ i+1 } ) - \gamma ( b_i ) |
	| \gamma ( b_{ j+1 } ) - \gamma ( b_j ) |
	\right|
	\leq 
	\frac{ C_b^{\alpha q} \bar{c}^2 }{ 2^{\alpha q /2} }
	L^2 K^{\alpha q}.
\]
\end{lem}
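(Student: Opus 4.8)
The plan is to follow the proof of Proposition \ref{estleqN2} essentially verbatim for the pointwise estimate of $\Manp^q$, and then to replace the counting of the $2N+1$ near-diagonal indices by a comparison of the remaining sum with a convergent double integral, uniform in $n$.

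First I would record the same pointwise bound used in the proof of Proposition \ref{estleqN2}. Since the inscribed polygon inherits the bi-Lipschitz constant $C_b$ through (\ref{ab--bi.Lip.}), I can bound the two denominators of $\Manp=(|a_j-a_i|^\alpha-|\gamma(b_j)-\gamma(b_i)|^\alpha)/(|\gamma(b_j)-\gamma(b_i)|^\alpha|a_j-a_i|^\alpha)$ from below by powers of $|b_j-b_i|$; and, using $|a_j-a_i|\le|b_j-b_i|$ from (\ref{ab--bi.Lip.}) to replace the polygon's intrinsic distance by the curve's arc-length, I can estimate the numerator by (\ref{alpha2more--est.}). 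This yields a bound of the form $\Manp^q\le C\,C_b^{\alpha q}(\alpha+2)^q4^{-q}K^{2q}|b_j-b_i|^{2q-\alpha q}$. In the case $\alpha\le2$ the sharper estimate (\ref{alpha2less--est.}) makes the factor $|b_j-b_i|^{2q-\alpha q}$ cancel against the denominator, leaving $\Manp^q\le C_b^{\alpha q}2^{-\alpha q/2}K^{\alpha q}$ bounded uniformly in $i$, $j$.

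Next I would carry out the summation over the far region $|j-i|>N$. Here the number of admissible indices $j$ is no longer bounded by a constant, so in place of the factor $2N+1$ I would treat the edge lengths $|\gamma(b_{k+1})-\gamma(b_k)|$ as the arc-length element of the polygon and compare $\sum_{i}\sum_{|j-i|>N}|b_j-b_i|^{2q-\alpha q}|\gamma(b_{i+1})-\gamma(b_i)||\gamma(b_{j+1})-\gamma(b_j)|$ with the double integral $\int_{\mathbb{S}_L}\int_{\mathbb{S}_L}|t-s|^{2q-\alpha q}\,ds\,dt$. The restriction $|j-i|>N$, together with (\ref{div.bk}) and (\ref{tsgeqbjbi}), keeps the pairs away from the diagonal and controls the distortion between the edge lengths and the parameter increments, the discrepancy being absorbed into $c$ and $\bar c$. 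In the case $\alpha\le2$ this step is immediate, since $\Manp^q$ is already bounded and the edge-length bounds (\ref{div.bk}) give $\sum_{i}\sum_{|j-i|>N}|\gamma(b_{i+1})-\gamma(b_i)||\gamma(b_{j+1})-\gamma(b_j)|\le\bar c^2L^2$, reproducing the stated $L^2K^{\alpha q}$ bound.

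The decisive point, and the reason the bound carries no factor of $1/n$ in contrast with Propositions \ref{estleqN} and \ref{estleqN2}, is the convergence of $\int_{\mathbb{S}_L}\int_{\mathbb{S}_L}|t-s|^{2q-\alpha q}\,ds\,dt$. This integral is finite precisely because the standing hypothesis $\alpha q<2q+1$ forces the exponent $2q-\alpha q$ to exceed $-1$, and its value on the circle of length $L$ is of order $L^{2q-\alpha q+2}/(2q-\alpha q+1)$, which reproduces both the power of $L$ and the denominator $2q-\alpha q+1$ in the statement. I expect the main obstacle to be making this sum-to-integral comparison uniform in $n$ while tracking the constants: the sign of $2q-\alpha q$ changes with $\alpha$ and $q$ (it is nonpositive when $q=1$ and may be positive for larger $q$), so the near-diagonal behaviour of the integrand differs between cases, and it is exactly the exclusion of the indices with $|j-i|\le N$ that neutralizes the integrable singularity when the exponent is negative.
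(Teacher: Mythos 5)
Your proposal is correct and follows essentially the same route as the paper: the paper gives no explicit proof of Lemma \ref{line}, deferring to ``the same calculations as those in the proof of Proposition \ref{estleqN2}'', and your argument reconstructs exactly those calculations — the pointwise bound $\Manp^q\leq C_b^{\alpha q}(\alpha+2)^q4^{-q}K^{2q}|b_j-b_i|^{2q-\alpha q}$ via (\ref{ab--bi.Lip.}) and (\ref{alpha2more--est.}) (resp.\ the uniform bound via (\ref{alpha2less--est.}) when $\alpha\leq2$), followed by the summation over $|j-i|>N$ controlled by $2q-\alpha q+1>0$, which is precisely where the factors $c^{2q-\alpha q}$ and $(2q-\alpha q+1)^{-1}$ in the stated constant originate. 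You also correctly identify the one genuinely new ingredient relative to Propositions \ref{estleqN}--\ref{estleqN2}, namely that the off-diagonal index count is unbounded and must be handled by the convergent Riemann-sum comparison rather than the factor $2N+1$.
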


Set
\begin{align*}
	X
	& := \,
	\sum_{ i=1 }^n \sum_{ | j-i | > N }
	\int_{ b_j }^{ b_{ j+1 } } \int_{ b_i }^{ b_{ i+1 } }
	\Mag^q
	dsdt,
\\
	Y
	& := \,
	\sum_{ i=1 }^n \sum_{ | j-i | > N }
	\Manp^q
	| \gamma ( b_{ i+1 } ) - \gamma ( b_i ) |
	| \gamma ( b_{ j+1 } ) - \gamma ( b_j ) |.
\end{align*}
We will estimate
$ | L^{ \alpha q -2 } X - \tilde{L}_n^{ \alpha q -2 } Y | $
which is the difference
of
the part of summation corresponding to
$i = 1, \ldots ,n$ and $ j $ with $ |j-i| > N $
of
$ \mathcal{E}^{\alpha,q} $
and
$ \mathcal{E}_n^{\alpha,q} $.
If there exists
$ \ell > 0 $
and
$ \tilde{C} > 0 $
such that
$ |X-Y| \leq \tilde{C} n^{- \ell} $,
then we have
\begin{eqnarray*}
	\lefteqn{
	| L^{ \alpha q -2 } X - \tilde{L}_n^{ \alpha q -2 } Y |
	}
\\
	& \overset{(\ref{1-xest.})}{\leq} &
	L^{ \alpha q -2 }
	| X-Y |
	+
	( \alpha q - 1 )
	L^{ \alpha q - 3 }
	( L - \tilde{L}_n ) | Y |
\\
	& \overset{\substack{(\ref{L-L--est.}),\\\text{Lemma }\ref{line}}}{\leq} &
	\tilde{C} L^{ \alpha q -2 } \frac{ 1 }{ n^{\ell} }
	+
	\frac{
	( \alpha q - 1 ) C_b^{\alpha q+3} ( \alpha  +2 )^q
	c^{2q-\alpha q} \bar{c}^5
	}
	{ 2^{2q+1} (2q- \alpha q+1) }
	L^{2q+2} K^{2q+2}
	\frac{ 1 }{ n^2 },
\end{eqnarray*}
and if $\alpha \leq 2$, similarly we have
\[
	| L^{ \alpha q -2 } X - \tilde{L}_n^{ \alpha q -2 } Y |
	\overset{\text{Lemma }\ref{line}}{\leq}
	\tilde{C} L^{ \alpha q -2 } \frac{ 1 }{ n^{\ell} }
	+
	\frac{
	( \alpha q - 1 ) C_b^{\alpha q+3} \bar{c}^5
	}
	{ 2^{\alpha q/2+1} }
	L^{ \alpha q +2 } K^{ \alpha q+2 }
	\frac{ 1 }{ n^2 }.
\]
Thus, it is sufficient to estimate
$ | X-Y | $.

Next, set
\fontsize{8.5pt}{0pt}\selectfont
\begin{align*}
	A_{i,j}
	& :=
	\int_{ b_j }^{ b_{ j+1 } } \!\! \int_{ b_i }^{ b_{ i+1 } }
	\left|
	\frac
	{ | t-s |^\alpha - | \gamma (t) - \gamma (s) |^\alpha }
	{ | \gamma (t) - \gamma (s) |^\alpha | t-s |^\alpha }
	-
	\frac
	{ | b_j - b_i |^\alpha - | \gamma ( b_j ) - \gamma ( b_i ) |^\alpha }
	{ | \gamma (t) - \gamma (s) |^\alpha | t-s |^\alpha }
	\right|
	dsdt,
\\
	B_{i,j}
	& :=
	\lefteqn{
	\int_{ b_j }^{ b_{ j+1 } } \!\! \int_{ b_i }^{ b_{ i+1 } }
	\left|
	\frac
	{ | b_j - b_i |^\alpha - | \gamma ( b_j ) - \gamma ( b_i ) |^\alpha }
	{ | \gamma (t) - \gamma (s) |^\alpha | t-s |^\alpha }
	-
	\frac
	{ | b_j - b_i |^\alpha - | \gamma ( b_j ) - \gamma ( b_i ) |^\alpha }
	{ | \gamma ( b_j ) - \gamma ( b_i ) |^\alpha | b_j - b_i |^\alpha }
	\right|
	dsdt,
	}
\\
	C_{i,j}
	& := 
	\left|
	\frac{ 1 }{ | a_j - a_i |^\alpha }
	-
	\frac{ 1 }{ | b_j - b_i |^\alpha }
	\right|
	| b_{i+1} - b_i |
	| b_{j+1} - b_j |,
\\
	D_{i,j}
	& :=
	\left|
	| b_{i+1} - b_i |
	| b_{j+1} - b_j |
	-
	| \gamma ( b_{i+1} ) - \gamma ( b_i ) |
	| \gamma ( b_{j+1} ) - \gamma ( b_j ) |
	\right|.
\end{align*}
\normalsize
\begin{rem}
In what follows,
$ C_{\rm g} $
is a positive constant that may change from line to line.
\end{rem}

Then, we have the following key lemma.
\begin{lem}\label{keylem}
There exists a positive constant
$C_{\rm g}$
such that
we have
\begin{align*}
	| X-Y |
	\leq &\ 
	C_{\rm g}
	K^{2( q-1 )}
	\sum_{ i=1 }^n \sum_{ | j-i | > N }
	| b_j - b_i |^{-( \alpha -2 )(q-1) }
	(A_{i,j} + B_{i,j} + C_{i,j} )
\\
	&\ 
	+
	C_{\rm g}
	K^{2q}
	\sum_{ i=1 }^n \sum_{ | j-i | > N }
	| b_j - b_i |^{-( \alpha -2)q} D_{i,j}.
\end{align*}
Moreover, if $\alpha \leq 2$,
we have
\[
	| X-Y |
	\leq
	C_{\rm g} K^{ \alpha ( q-1 ) }
	\sum_{ i=1 }^n \sum_{ | j-i | > N }
	(
	A_{i,j} + B_{i.j} + C_{i,j}
	)
	+
	C_{\rm g} K^{ \alpha q }
	\sum_{ i=1 }^n \sum_{ | j-i | > N } D_{i,j}.
\]
\end{lem}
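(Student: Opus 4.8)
The plan is to realize $X-Y$ as a four-step telescoping sum in which, over each rectangle $[b_i,b_{i+1}]\times[b_j,b_{j+1}]$, the integrand $(\Mag)^q$ of $X$ is transformed stage by stage into the constant summand $(\Manp)^q|\gamma(b_{i+1})-\gamma(b_i)||\gamma(b_{j+1})-\gamma(b_j)|$ of $Y$. Writing $\Mag=\frac{|t-s|^\alpha-|\gamma(t)-\gamma(s)|^\alpha}{|\gamma(t)-\gamma(s)|^\alpha|t-s|^\alpha}$ and $\Manp=\frac{1}{|\gamma(b_j)-\gamma(b_i)|^\alpha}-\frac{1}{|a_j-a_i|^\alpha}$, the four quantities record exactly the four replacements: $A_{i,j}$ freezes the numerator at its vertex value $|b_j-b_i|^\alpha-|\gamma(b_j)-\gamma(b_i)|^\alpha$; $B_{i,j}$ freezes the denominator at $|\gamma(b_j)-\gamma(b_i)|^\alpha|b_j-b_i|^\alpha$; $C_{i,j}$ exchanges the intrinsic length $|b_j-b_i|$ for the polygonal length $|a_j-a_i|$, turning the integrand into the constant $\Manp$; and $D_{i,j}$ trades the parameter weight $|b_{i+1}-b_i||b_{j+1}-b_j|$ (the area of the cell, which is what integrating the now-constant $(\Manp)^q$ produces) for the edge weight appearing in $Y$.

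For the first three stages the workhorse is the elementary estimate $|u^q-v^q|\le q(\max\{u,v\})^{q-1}|u-v|$, valid for $u,v\ge0$ and $q\ge1$. Labelling the successive base quantities $M_0:=\Mag$, $M_1$, $M_2$, $M_3:=\Manp$ (each nonnegative, since every intrinsic distance dominates the matching chordal distance by \pref{bi-Lip} and \pref{ab--bi.Lip.}), I bound $\big|\int(M_k^q-M_{k+1}^q)\big|$ by $q(\max_k M_k)^{q-1}$ times $\int|M_k-M_{k+1}|$; these last integrals are precisely $A_{i,j}$, $B_{i,j}$, and $C_{i,j}$ (the third being constant over the cell, so its integral is $C_{i,j}$ itself). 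The fourth stage instead carries the full power $(M_3)^q=(\Manp)^q$, multiplying $D_{i,j}$. Summing over $i$ and over $|j-i|>N$ then produces the claimed two sums, once the level factors $(\max_k M_k)^{q-1}$ and $(M_3)^q$ are bounded.

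Supplying those bounds is the quantitative heart of the argument, and the step I expect to require the most care. The goal is a single majorant $M_k\le CK^2|b_j-b_i|^{2-\alpha}$ valid for every stage (respectively $M_k\le CK^\alpha$ when $\alpha\le2$), for only then does the common factor $K^{2(q-1)}|b_j-b_i|^{-(\alpha-2)(q-1)}$ govern $A,B,C$ at once, with $K^{2q}|b_j-b_i|^{-(\alpha-2)q}$ coming from $(M_3)^q$ in the $D$ term. For $M_0,M_1,M_2$ this is direct: each numerator is a difference of $\alpha$-th powers controlled by \pref{alpha2more--est.} (or \pref{alpha2less--est.} when $\alpha\le2$), while each denominator is bounded below through \pref{bi-Lip}; and since $|j-i|>N$ the lengths $|t-s|$, $|b_j-b_i|$, $|a_j-a_i|$ are mutually comparable by \pref{tsleqbjbi}, \pref{tsgeqbjbi}, and \pref{ab--bi.Lip.}, so $|t-s|$ inside a cell may be replaced by $|b_j-b_i|$ at the cost of constants absorbed into $C_{\rm g}$. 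The potential obstruction is $M_3$, whose numerator compares the \emph{polygonal} arc length $|a_j-a_i|$ with the chord rather than the curve's arc length with the chord; the clean way around it is to note, from the ordering $|\gamma(b_j)-\gamma(b_i)|\le|a_j-a_i|\le|b_j-b_i|$ in \pref{ab--bi.Lip.}, that $0\le M_3\le M_2$, so $M_3$ simply inherits the majorant of $M_2$ without any new estimate. Once all four stage-quantities are pinned to the common bound, the contributions assemble into the asserted inequality, and the $\alpha\le2$ case is identical save that the uniform majorant $CK^\alpha$ leaves no surviving power of $|b_j-b_i|$, yielding the factors $K^{\alpha(q-1)}$ and $K^{\alpha q}$.
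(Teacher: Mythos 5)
Your proposal is correct and follows essentially the same route as the paper: split off the measure-change term carrying $(\Manp)^q D_{i,j}$, apply the power-difference inequality $|u^q-v^q|\le q\max\{u,v\}^{q-1}|u-v|$ (the paper's \pref{1-xest.}) together with \pref{alpha2more--est.} or \pref{alpha2less--est.} to majorize the level factors by $CK^2|b_j-b_i|^{2-\alpha}$ (resp.\ $CK^\alpha$) via \pref{bi-Lip} and \pref{tsgeqbjbi}, and telescope $\Mag-\Manp$ into the three pieces whose cell integrals are $A_{i,j}$, $B_{i,j}$, $C_{i,j}$. The only cosmetic difference is that the paper applies the power inequality once between the two endpoints $\Mag$ and $\Manp$ and then uses the triangle inequality, whereas you apply it at each telescoping stage, which forces you to bound the intermediate quantities $M_1,M_2,M_3$ as well — and your observations ($M_1,M_2$ obey the same numerator/denominator estimates, $0\le M_3\le M_2$ by \pref{ab--bi.Lip.}) handle that correctly.
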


\begin{proof}
We have
\begin{eqnarray}
	\lefteqn{
	\quad | X-Y |
	}
\nonumber\\
	& \overset{(\ref{1-xest.}),\,(\ref{alpha2more--est.})}{\leq} &
	C_{\text g} K^{2(q-1)}
	\sum_{ i=1 }^n \sum_{ | j-i | > N }
	\int_{ b_j }^{ b_{ j+1 } } \int_{ b_i }^{ b_{ i+1 } }
	\left|
	\mathcal{M}^\alpha ( \gamma )
	-
	\mathcal{M}_n^\alpha ( p_n )
	\right|
\nonumber\\
	& &
	\qquad \qquad \qquad
	\times
	\max
	\left\{
	\frac{ |t-s|^2 }{ |\gamma(t) - \gamma(s)|^\alpha}
	,
	\frac{ |b_j - b_i|^2 }{|\gamma(b_j) - \gamma(b_i)|^\alpha}
	\right\}^{ q-1 }
	dsdt
\nonumber\\
	& &
	+\, C_{\text g} K^{2q}
	\sum_{ i=1 }^n \sum_{ | j-i | > N }
	\frac{ |b_j - b_i|^{2q} }{ |\gamma(b_j) - \gamma(b_i)|^{\alpha q}}
\nonumber\\
	& &
	\qquad 
	\times
	\left|
	| b_{i+1} - b_i |
	| b_{j+1} - b_j |
	-
	| \gamma ( b_{i+1} ) - \gamma ( b_i ) |
	| \gamma ( b_{j+1} ) - \gamma ( b_j ) |
	\right|
\nonumber\\
	& \overset{(\ref{bi-Lip}),\,(\ref{tsgeqbjbi})}{\leq} &
	C_{\text g}
	K^{2(q-1)}
	\sum_{ i=1 }^n \sum_{ | j-i | > N }
	| b_j - b_i |^{-( \alpha -2 )(q-1) }
	( A_{i,j} + B_{i,j} + C_{i,j})
\nonumber\\
	& &
	+\,
	C_{\text g}
	K^{2q}
	\sum_{ i=1 }^n \sum_{ | j-i | > N }
	| b_j - b_i |^{-( \alpha -2)q} D_{i,j}.
\nonumber
\end{eqnarray}
In the case where $\alpha \leq 2$, we get the claim in a similar way using (\ref{alpha2less--est.}) instead of (\ref{alpha2more--est.}).

\end{proof}

Before we estimate the summations appearing in the statement of Lemma \ref{keylem},
we state inequalities used later.
The following lemma is proved by using inequalities (\ref{bi-Lip}), (\ref{tsgeqbjbi}), and (\ref{1-xest.}).

\begin{lem}\label{A--pre.est.12}
For
$ s \in [ b_i , b_{i+1} ],\ t \in [ b_j , b_{j+1} ] $,
we have
\begin{equation}
	| 
	 | \gamma ( b_j ) - \gamma ( b_i ) |^\alpha
	 -
	 | \gamma (t) - \gamma (s) |^\alpha
	|
	\leq
	C_{ \rm g }
	| b_j - b_i |^{ \alpha - 1 }
	\max_{ k =  1 , \ldots , n }
	| b_{ k+1 } - b_k |,
	\label{A--pre.est.1}
\end{equation}
and
\begin{equation}
	|
	 | b_j - b_i |^\alpha
	 -
	 | t-s |^\alpha
	|
	\leq
	C_{ \rm g }
	| b_j - b_i |^{ \alpha - 1 }
	\max_{ k =  1 , \ldots , n }
	| b_{ k+1 } - b_k |.
	\label{A--pre.est.2}
\end{equation}
\end{lem}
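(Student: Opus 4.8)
The plan is to derive both inequalities from a single Lipschitz estimate for the power map $x\mapsto x^\alpha$, combined with the bi-Lipschitz comparabilities already in hand. First I would record the elementary bound that for positive reals $r\le R$,
\[
	R^\alpha - r^\alpha
	= R^\alpha\bigl(1-(r/R)^\alpha\bigr)
	\le (\alpha+1)\,R^{\alpha-1}(R-r),
\]
which is immediate from (\ref{1-xest.}) with $a=\alpha$ and $x=r/R\in[0,1]$. Symmetrising gives, for arbitrary positive $r,R$,
\[
	|R^\alpha - r^\alpha|
	\le (\alpha+1)\,\max\{r,R\}^{\alpha-1}\,|R-r|.
\]
I would then apply this with $(R,r)$ equal to the two chord lengths $|\gamma(b_j)-\gamma(b_i)|,\,|\gamma(t)-\gamma(s)|$ for (\ref{A--pre.est.1}), and to the two intrinsic lengths $|b_j-b_i|,\,|t-s|$ for (\ref{A--pre.est.2}).

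Next I would bound the factor $|R-r|$. For (\ref{A--pre.est.1}) the reverse triangle inequality in $\mathbb{R}^d$ gives
\[
	\bigl|\,|\gamma(b_j)-\gamma(b_i)|-|\gamma(t)-\gamma(s)|\,\bigr|
	\le |\gamma(b_j)-\gamma(t)|+|\gamma(s)-\gamma(b_i)|,
\]
and since $\gamma$ is parametrised by arc length each chord is at most the corresponding arc, so each summand is at most $\max_k|b_{k+1}-b_k|$ and hence $|R-r|\le 2\max_k|b_{k+1}-b_k|$. For (\ref{A--pre.est.2}) the same inequality for the intrinsic distance on $\mathbb{S}_L$ yields $\bigl|\,|b_j-b_i|-|t-s|\,\bigr|\le|t-b_j|+|s-b_i|\le 2\max_k|b_{k+1}-b_k|$.

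It then remains to replace $\max\{r,R\}^{\alpha-1}$ by a constant multiple of $|b_j-b_i|^{\alpha-1}$, which is where the comparabilities enter. In the regime $|j-i|>N$ relevant to Lemma \ref{keylem}, inequalities (\ref{bi-Lip}), (\ref{ab--bi.Lip.}), (\ref{tsleqbjbi}), and (\ref{tsgeqbjbi}) together show that all four lengths $|b_j-b_i|,\,|t-s|,\,|\gamma(b_j)-\gamma(b_i)|,\,|\gamma(t)-\gamma(s)|$ lie in $[\lambda,\Lambda]\,|b_j-b_i|$ for fixed constants $0<\lambda\le\Lambda$ depending only on $C_b,c,\bar c$. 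Hence $\max\{r,R\}\in[\lambda,\Lambda]\,|b_j-b_i|$, so $\max\{r,R\}^{\alpha-1}\le\max\{\lambda^{\alpha-1},\Lambda^{\alpha-1}\}\,|b_j-b_i|^{\alpha-1}$. Substituting this together with the length-difference bounds into the Lipschitz estimate and absorbing every constant into $C_{\rm g}$ produces both (\ref{A--pre.est.1}) and (\ref{A--pre.est.2}).

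The only genuinely delicate point is this last substitution: the exponent $\alpha-1$ can be positive or negative, so one must invoke the \emph{upper} comparability $\Lambda$ when $\alpha\ge 1$ and the \emph{lower} comparability $\lambda$ when $\alpha<1$. It is for the latter case that the lower estimate (\ref{tsgeqbjbi}) is essential, keeping $|t-s|$—and via (\ref{bi-Lip}) also $|\gamma(t)-\gamma(s)|$—bounded below by a multiple of $|b_j-b_i|$. Everything else is routine triangle-inequality bookkeeping, so I do not anticipate any real difficulty beyond tracking the constants.
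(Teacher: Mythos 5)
Your argument is correct and follows essentially the same route the paper indicates for this lemma: the elementary bound (\ref{1-xest.}) applied to $x\mapsto x^\alpha$, the triangle inequality to control the difference of the two lengths by $\max_k|b_{k+1}-b_k|$, and the comparabilities (\ref{bi-Lip}), (\ref{tsleqbjbi}), (\ref{tsgeqbjbi}) (valid in the regime $|j-i|>N$ where the lemma is used) to replace $\max\{r,R\}^{\alpha-1}$ by $C_{\rm g}|b_j-b_i|^{\alpha-1}$. Your explicit remark on handling the sign of the exponent $\alpha-1$ via the lower versus upper comparability is exactly the point that makes (\ref{tsgeqbjbi}) necessary.
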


Using Lemma \ref{A--pre.est.12}, we estimate the summations appearing in Lemma \ref{keylem}.

\begin{prop}\label{A--est.}
We have
\[
	\sum_{ i=1 }^n \sum_{ | j-i | > N }
	| b_j-b_i |^{ -( \alpha -2)(q-1) }
	A_{i,j}
	\leq
	C_{ \mathrm{g} }
	L^{ 2q - \alpha q+2 } K^2
	\frac{1}{ n^{ 2q- \alpha q+1} }.
\]
Moreover, if $\alpha \leq 2$, then we have
\[
	\sum_{ i=1 }^n \sum_{ | j-i | > N }
	A_{i,j}
	\leq
	C_{ \mathrm{g} }
	\left(
	L^2 K^\alpha
	\frac{ \log n }{ n }
	+
	L^{ 4-\alpha } K^2
	\frac{ 1 }{ n }
	\right).
\]
\end{prop}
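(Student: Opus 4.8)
The plan is to estimate each cell integral $A_{i,j}$ separately and then carry out the double summation. First I would bound the common denominator from below: since the restriction $|j-i|>N$ keeps $(s,t)$ off the diagonal, inequalities (\ref{bi-Lip}) and (\ref{tsgeqbjbi}) give $|\gamma(t)-\gamma(s)|^\alpha|t-s|^\alpha\geq C_{\rm g}|b_j-b_i|^{2\alpha}$ uniformly for $s\in[b_i,b_{i+1}]$ and $t\in[b_j,b_{j+1}]$. The $ds\,dt$ integration over the cell then only contributes its area $|b_{i+1}-b_i|\,|b_{j+1}-b_j|\leq C_{\rm g}(L/n)^2$ by (\ref{div.bk}) and (\ref{ab--bi.Lip.}). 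So everything reduces to a pointwise bound on the numerator $\bigl|\,(|t-s|^\alpha-|\gamma(t)-\gamma(s)|^\alpha)-(|b_j-b_i|^\alpha-|\gamma(b_j)-\gamma(b_i)|^\alpha)\,\bigr|$, i.e.\ on the variation of $f:=(\text{arc length})^\alpha-(\text{chord length})^\alpha$ between the cell corner $(b_i,b_j)$ and an interior point $(s,t)$.

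The crux, and the step I expect to be the main obstacle, is to bound this numerator sharply. A naïve triangle-inequality split, estimating $|\,|t-s|^\alpha-|b_j-b_i|^\alpha|$ and $|\,|\gamma(t)-\gamma(s)|^\alpha-|\gamma(b_j)-\gamma(b_i)|^\alpha|$ separately through Lemma \ref{A--pre.est.12}, yields only $C_{\rm g}|b_j-b_i|^{\alpha-1}(L/n)$ and carries no power of $K$; feeding this into the summation loses both the factor $K^2$ demanded by Lemma \ref{keylem} and the correct rate. The remedy is to exploit the near-cancellation of the two increments: because the curve is $C^{1,1}$, arc length and chord length agree to order $K^2(\cdot)^2$ by (\ref{ts--est.}), so the arc-increment and the chord-increment of $(\cdot)^\alpha$ differ only at order $K^2$. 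Combining Lemma \ref{A--pre.est.12} with the arc--chord estimates (\ref{ts--est.}) and (\ref{alpha2more--est.}), I would obtain the bound $(\text{numerator})\leq C_{\rm g}K^2|b_j-b_i|^{\alpha+1}(L/n)$; when $\alpha\leq2$ the alternative estimate (\ref{alpha2less--est.}) gives the companion bound $(\text{numerator})\leq C_{\rm g}K^\alpha|b_j-b_i|^{2\alpha-1}(L/n)$.

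With these in hand, $A_{i,j}\leq C_{\rm g}K^2|b_j-b_i|^{1-\alpha}(L/n)^3$ (and the $K^\alpha$-analogue for $\alpha\leq2$). Multiplying by the weight $|b_j-b_i|^{-(\alpha-2)(q-1)}$ collapses the power of $|b_j-b_i|$ to $2q-\alpha q-1$. To sum, I would group the pairs by cyclic index-distance $m=|j-i|$: there are $O(n)$ ordered pairs at each $m$, and $|b_j-b_i|\asymp mL/n$ by (\ref{div.bk}) and (\ref{ab--bi.Lip.}), so the double sum reduces to $C_{\rm g}K^2(L/n)^{3}\,n\sum_{m>N}(mL/n)^{2q-\alpha q-1}$, a one-dimensional sum I would evaluate by comparison with $\int x^{2q-\alpha q-1}\,dx$. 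When $2q-\alpha q-1<-1$ the series converges and the powers of $n$ combine to the polynomial rate $n^{-(2q-\alpha q+1)}$ governing the first assertion. For $\alpha\leq2$ one drops the weight (as in Lemma \ref{keylem}) and sums the two cell-bounds separately: the $K^\alpha$-bound produces $\sum_{m}m^{-1}\sim\log n$, hence the term $L^2K^\alpha\log n/n$, while the $K^2$-bound produces $\sum_{m}m^{1-\alpha}\sim n^{2-\alpha}$, hence $L^{4-\alpha}K^2/n$. The only genuinely delicate point is the cancellation estimate of the preceding paragraph; once it is established, the summation is a routine integral comparison.
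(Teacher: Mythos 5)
Your overall architecture — bound the denominator below by $|b_j-b_i|^{2\alpha}$ using (\ref{bi-Lip}) and (\ref{tsgeqbjbi}), reduce to a pointwise bound on the numerator, then sum over the cyclic index-distance $m=|j-i|$ with $|b_j-b_i|\asymp mL/n$ — is exactly the paper's, and your summation step (the collapsed exponent $2q-\alpha q-1$, and the $\log n$ versus $n^{2-\alpha}$ dichotomy for $\alpha\le 2$) reproduces the paper's conclusion. The gap is at the step you yourself call the crux. The cell bound
\[
\bigl|\,(|t-s|^\alpha-|\gamma(t)-\gamma(s)|^\alpha)-(|b_j-b_i|^\alpha-|\gamma(b_j)-\gamma(b_i)|^\alpha)\,\bigr|
\le C_{\rm g}K^2|b_j-b_i|^{\alpha+1}\max_{k}|b_{k+1}-b_k|
\]
does \emph{not} follow from ``combining Lemma \ref{A--pre.est.12} with (\ref{ts--est.}) and (\ref{alpha2more--est.})''. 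Lemma \ref{A--pre.est.12} controls the variation of $|t-s|^\alpha$ and of $|\gamma(t)-\gamma(s)|^\alpha$ separately and carries no power of $K$; applying (\ref{ts--est.})/(\ref{alpha2more--est.}) to each arc--chord defect separately gives only $C_{\rm g}K^2|b_j-b_i|^{\alpha+2}$, which misses the target by the factor $\max_k|b_{k+1}-b_k|/|b_j-b_i|\sim 1/|j-i|$. That lost factor is not cosmetic: with the weaker numerator bound the weighted double sum has exponent $-(\alpha-2)q$ and evaluates to $O(1)$ — it does not decay at all. Interpolating between the two available bounds also fails (it produces $K^{1}$ and $\max^{1/2}$, not $K^{2}$ and $\max^{1}$).

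The missing ingredient is the paper's estimate (\ref{A--pre.est.0}). After the algebraic split of the numerator into
\[
(|t-s|^\alpha-|b_j-b_i|^\alpha)\Bigl(1-\tfrac{|\gamma(t)-\gamma(s)|^\alpha}{|t-s|^\alpha}\Bigr)
-|b_j-b_i|^\alpha\Bigl(\tfrac{|\gamma(t)-\gamma(s)|^\alpha}{|t-s|^\alpha}-\tfrac{|\gamma(b_j)-\gamma(b_i)|^\alpha}{|b_j-b_i|^\alpha}\Bigr),
\]
the first product is indeed handled by Lemma \ref{A--pre.est.12} together with (\ref{alpha2more--est.}), as you propose. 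But for the second one must show that the two ratios differ by $C_{\rm g}K^2|b_j-b_i|\max_k|b_{k+1}-b_k|$, and this requires writing $|t-s|^2-|\gamma(t)-\gamma(s)|^2$ as $\iint(1-\langle\gamma'(v),\gamma'(u)\rangle)\,du\,dv$ over $[s,t]^2$, so that the \emph{difference} of the squared defects at $(s,t)$ and at $(b_i,b_j)$ becomes an integral over the symmetric difference of the squares $[s,t]^2$ and $[b_i,b_j]^2$ — a set of measure $O(|b_j-b_i|\max_k|b_{k+1}-b_k|)$ with integrand of size $O(K^2|t-s|^2)$; this is (\ref{A--pre.est.0}), which then yields (\ref{A--pre.est.4}). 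Without this second-order cancellation your claimed cell bound, and hence the proposition, is not established; the same ingredient is also needed for the $K^2$-term in your $\alpha\le 2$ case.
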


\begin{proof}
Fix
$ s \in [ b_i , b_{i+1} ] $ and $ t \in [ b_j , b_{j+1} ] $.
Without loss of generality, we may assume $s<t$.
Then, we have
\begin{eqnarray}
	\lefteqn{
	| | t-s |^\alpha - | b_j - b_i |^\alpha |
	\left(
	1 -
	\frac
	{ | \gamma (t) - \gamma (s) |^\alpha }
	{ | t-s |^\alpha }
	\right)
	}
\nonumber\\
	& \overset{(\ref{A--pre.est.2}),\,(\ref{tsgeqbjbi})}{\leq} &
	C_{ \text g}
	\frac{ \max_{ k = 1 , \dots , n } | b_{ k+1 } - b_k | }
	{ | b_j - b_i | }
	( | t-s |^\alpha - | \gamma (t) - \gamma (s) |^\alpha )
\nonumber\\
	& \overset{(\ref{alpha2more--est.}),\,(\ref{tsleqbjbi})}{\leq} &
	C_{ \text g}
	K^2
	| b_j - b_i |^{ \alpha +1 }
	\max_{ k = 1 , \ldots , n } | b_{ k+1 } - b_k |.
	\label{A--pre.est.3}
\end{eqnarray}
Also, we have
\begin{eqnarray*}
	\lefteqn{
	|
	 ( | t-s |^2 - | \gamma (t) - \gamma (s) |^2 )
	 -
	 ( | b_j - b_i |^2 - | \gamma ( b_j ) - \gamma ( b_i ) |^2 )
	|
	}
\\
	& = &
	\left|
	\iint_{A}
	( 1- \langle \gamma^\prime (v) , \gamma^\prime (u) \rangle ) dudv
	-
	\iint_{B}
	( 1- \langle \gamma^\prime (v) , \gamma^\prime (u) \rangle ) dudv
	\right|,
\end{eqnarray*}
where
\begin{align*}
	A &:=\,
	( [s,t] \times [b_j,t] ) \cup ( [b_j,t] \times [s,t] ),
\\
	B &:=\,
	( [b_i,b_j] \times [b_i,s] ) \cup ( [b_i,s] \times [b_i,b_j] ).
\end{align*}
The integral over
$ [ b_j , t ] \times [ s,t ] $
is estimated as
\[
	\left|
	\int_s^t \int_{b_j}^t
	( 1- \langle \gamma^\prime (v) , \gamma^\prime (u) \rangle ) dudv
	\right|
	\leq
	\frac{K^2}{2}
	| t-s |^3 | t-b_j |.
\]
We can dominate the integrals over
$ [ s,t ] \times [ b_j ,t ] $
and
$ B $ similarly.
Then, we get
\begin{align}
	&\ 
	|
	 ( | t-s |^2 - | \gamma (t) - \gamma (s) |^2 )
	 -
	 ( | b_j - b_i |^2 - | \gamma ( b_j ) - \gamma ( b_i ) |^2 )
	|
\nonumber\\
	\leq &\ 
	2 K^2
	\left(
	| t-s |^3 | t - b_j |
	+
	| b_j - b_i |^3 | s - b_i |
	\right).
\label{A--pre.est.0}
\end{align}
Consequently, it holds that
\begin{eqnarray}
	\lefteqn{
	| b_j - b_i |^\alpha
	\left|
	\frac
	{ | \gamma (t) - \gamma (s) |^\alpha }
	{ | t-s |^\alpha }
	-
	\frac
	{ | \gamma ( b_j ) - \gamma ( b_i ) |^\alpha }
	{ | b_j - b_i |^\alpha }
	\right|
	}
\nonumber\\
	& \overset{\substack{(\ref{1-xest.}),\,(\ref{bi-Lip}),\\(\ref{alpha2less--est.}),\,(\ref{A--pre.est.0})}}{\leq} &
	C_{ \text g }
	K^2
	| b_j - b_i |^{ \alpha - 2 }
	\frac{
	(
	| t-s |^3 | t - b_j |
	+
	| b_j - b_i |^3 | s - b_i |
	)}
	{ |t-s|^2 }
\nonumber\\
	&  &
	+ \ C_{ \text g }
	K^2
	| b_j - b_i |^{ \alpha + 4 }
	\left|
	\frac{ 1 }{ | t-s |^2 }
	-
	\frac{ 1 }{ | b_j - b_i |^2 }
	\right|
\nonumber\\
	& \overset{(\ref{tsgeqbjbi}),\,(\ref{A--pre.est.2})}{\leq} &
	C_{ \text g }
	K^2
	| b_j - b_i |^{ \alpha + 1 }
	\max_{ k = 1 , \ldots , n } | b_{ k+1 } - b_k |.
	\label{A--pre.est.4}
\end{eqnarray}
Using (\ref{A--pre.est.3}) and (\ref{A--pre.est.4}), we have
\begin{eqnarray*}
	\lefteqn{
	\ |
	 (
	  | t-s |^\alpha
	  -
	  | \gamma (t) - \gamma (s) |^\alpha
	 )
	 -
	 (
	  | b_j - b_i |^\alpha
	  -
	  | \gamma ( b_j ) - \gamma ( b_i ) |^\alpha
	 )
	|}
\\
	& = &
	\left|
	( | t-s |^\alpha - | b_j-b_i |^\alpha )
	\left( 1- \frac{ | \gamma(t)-\gamma(s) |^\alpha }{ | t-s |^\alpha } \right)
	\right.
\\
	& &
	\left.
	-| b_j-b_i |^\alpha
	\left(
	\frac{ | \gamma(t)-\gamma(s) |^\alpha }{ | t-s |^\alpha }
	-
	\frac{ | \gamma(b_j)-\gamma(b_i) |^\alpha }{ | b_j-b_i |^\alpha }
	\right)
	\right|
\\
	& \leq &
	C_{ \text g }
	K^2
	| b_j - b_i |^{ \alpha + 1 }
	\max_{ k = 1 , \ldots , n } | b_{ k+1 } - b_k |.
\end{eqnarray*}
Hence, we have
\[
	A_{i,j}
	\leq
	C_{ \text g }
	K^2
	| b_j - b_i |^{ 1 - \alpha }
	\max_{ k = 1 , \cdots , n } | b_{ k+1 } - b_k |^3 ,
\]
and therefore, we get
\begin{eqnarray*}
	\lefteqn{
	\sum_{ i=1 }^n \sum_{ | j-i | > N }
	| b_j-b_i |^{ -( \alpha -2)(q-1) }
	A_{i,j}
	}
\\
	& \leq &
	C_{\text g}
	K^2
	\sum_{ i=1 }^n \sum_{ | j-i | > N }
	| b_j-b_i |^{ -( \alpha -2)(q-1)+1 -\alpha }
	\max_{ k = 1 , \ldots , n } | b_{ k+1 } - b_k |^3 
\\
	& \overset{(\ref{div.bk}),\,(\ref{bi-Lip})}{\leq} &
	C_{ \text g }
	L^{ 2q - \alpha q+2 } K^2
	\frac{1}{ n^{ 2q- \alpha q+1} }.
\end{eqnarray*}

Next, assume that
$ \alpha \leq 2 $.
Using (\ref{tsleqbjbi}), (\ref{tsgeqbjbi}), (\ref{alpha2less--est.}), and (\ref{A--pre.est.2}), we have
\begin{eqnarray*}
	\lefteqn{
	| | t-s |^\alpha - | b_j - b_i |^\alpha |
	\left(
	1 -
	\frac
	{ | \gamma (t) - \gamma (s) |^\alpha }
	{ | t-s |^\alpha }
	\right)
	}
\nonumber\\
	& \leq &
	C_{ \text g}
	K^\alpha
	| b_j - b_i |^{ 2 \alpha -1 }
	\max_{ k = 1 , \ldots , n } | b_{ k+1 } - b_k |.
\end{eqnarray*}
Therefore, using in addition (\ref{tsleqbjbi}), (\ref{tsgeqbjbi}), (\ref{A--pre.est.1}), (\ref{A--pre.est.2}), it holds that
\begin{eqnarray*}
	\lefteqn{
	|
	 (
	  | t-s |^\alpha
	  -
	  | \gamma (t) - \gamma (s) |^\alpha
	 )
	 -
	 (
	  | b_j - b_i |^\alpha
	  -
	  | \gamma ( b_j ) - \gamma ( b_i ) |^\alpha
	 )
	|}
\\
	& = &
	\left|
	( | t-s |^\alpha - | b_j-b_i |^\alpha )
	\left( 1- \frac{ | \gamma(t)-\gamma(s) |^\alpha }{ | t-s |^\alpha } \right)
	\right.
\\
	& &
	\left.
	-| b_j-b_i |^\alpha
	\left(
	\frac{ | \gamma(t)-\gamma(s) |^\alpha }{ | t-s |^\alpha }
	-
	\frac{ | \gamma(b_j)-\gamma(b_i) |^\alpha }{ | b_j-b_i |^\alpha }
	\right)
	\right|
\\
	& \leq &
	C_{ \text g }
	K^\alpha
	| b_j - b_i |^{ 2 \alpha - 1 }
	\max_{ k = 1 , \ldots , n } | b_{ k+1 } - b_k |
\\
	& &
	+\,C_{\text g}
	K^2
	| b_j - b_i |^{ \alpha + 1 }
	\max_{ k = 1 , \ldots , n } | b_{ k+1 } - b_k |.
\end{eqnarray*}
Moreover, by (\ref{div.bk}) and (\ref{bi-Lip}), we get
\[
	A_{i,j}
	\leq
	C_{\text g}
	\left(
	L^2 K^\alpha \frac 1 { |j-i| n^2 } + L^{4- \alpha} K^2 \frac 1 { |j-i|^{\alpha -1} n^{4- \alpha } }
	\right).
\]
Hence, we obtain
\[
	\sum_{ i=1 }^n \sum_{ | j-i | > N }
	A_{i,j}
	\leq
	C_{ \text g }
	\left(
	L^2 K^\alpha
	\frac{ \log n }{ n }
	+
	L^{ 4-\alpha } K^2
	\frac{ 1 }{ n }
	\right).
\]
\end{proof}

\begin{prop}\label{B--est.}
We have
\[
	\sum_{ i=1 }^n \sum_{ | j-i | > N }
	| b_j-b_i |^{ -( \alpha -2 )(q-1) }
	B_{i,j}
	\leq 
	C_{ \mathrm{g} }
	L^{ 2q - \alpha q+2 } K^2
	\frac{1}{ n^{ 2q- \alpha q+1} }.
\]
Moreover, if $\alpha \leq 2$, then we have
\[
	\sum_{ i=1 }^n \sum_{ | j-i | > N }
	B_{i,j}
	\leq
	C_{ \mathrm{g} }
	L^2 K^\alpha
	\frac{ \log n }{ n }.
\]
\end{prop}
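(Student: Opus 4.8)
The plan is to exploit the fact that the two fractions defining $B_{i,j}$ share the common numerator $|b_j-b_i|^\alpha-|\gamma(b_j)-\gamma(b_i)|^\alpha$, which is nonnegative since the intrinsic distance dominates the chord length (cf.\ (\ref{ts--est.})). Factoring it out turns $B_{i,j}$ into the product
\[
	B_{i,j}=\left(|b_j-b_i|^\alpha-|\gamma(b_j)-\gamma(b_i)|^\alpha\right)\int_{b_j}^{b_{j+1}}\int_{b_i}^{b_{i+1}}\left|\frac{1}{|\gamma(t)-\gamma(s)|^\alpha|t-s|^\alpha}-\frac{1}{|\gamma(b_j)-\gamma(b_i)|^\alpha|b_j-b_i|^\alpha}\right|dsdt,
\]
so the estimate splits into bounding the scalar prefactor and the integrand separately. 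For the prefactor I would invoke (\ref{alpha2more--est.}) in general and (\ref{alpha2less--est.}) when $\alpha\leq2$, which give $C_{\rm g}K^2|b_j-b_i|^{\alpha+2}$ and $C_{\rm g}K^\alpha|b_j-b_i|^{2\alpha}$, respectively.

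The heart of the matter is the difference of reciprocals. Writing it over a common denominator, the numerator equals $\left||\gamma(b_j)-\gamma(b_i)|^\alpha|b_j-b_i|^\alpha-|\gamma(t)-\gamma(s)|^\alpha|t-s|^\alpha\right|$, which I would control by adding and subtracting the mixed product $|\gamma(t)-\gamma(s)|^\alpha|b_j-b_i|^\alpha$, bounding it by
\[
	|b_j-b_i|^\alpha\bigl||\gamma(b_j)-\gamma(b_i)|^\alpha-|\gamma(t)-\gamma(s)|^\alpha\bigr|+|\gamma(t)-\gamma(s)|^\alpha\bigl||b_j-b_i|^\alpha-|t-s|^\alpha\bigr|.
\]
Here the two genuine differences are handled by (\ref{A--pre.est.1}) and (\ref{A--pre.est.2}), each of size $C_{\rm g}|b_j-b_i|^{\alpha-1}\max_{k}|b_{k+1}-b_k|$, while the surviving powers together with the four factors of the common denominator are all comparable to powers of $|b_j-b_i|$ by (\ref{bi-Lip}) and (\ref{tsgeqbjbi}) (recall $|j-i|>N$). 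This produces the uniform bound $C_{\rm g}|b_j-b_i|^{-2\alpha-1}\max_{k}|b_{k+1}-b_k|$ for the integrand. I expect this reciprocal-difference step --- specifically, tracking the cancellation so that no spurious factor of $K$ is introduced at this stage --- to be the only delicate point; the rest is bookkeeping.

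Multiplying the prefactor by the integrand bound and by the measure factor $|b_{i+1}-b_i||b_{j+1}-b_j|\leq\max_{k}|b_{k+1}-b_k|^2$ coming from the double integral yields, in the general case, $B_{i,j}\leq C_{\rm g}K^2|b_j-b_i|^{1-\alpha}\max_{k}|b_{k+1}-b_k|^3$. This is precisely the pointwise bound obtained for $A_{i,j}$ in the proof of Proposition \ref{A--est.}, so the weighted sum $\sum_{i=1}^n\sum_{|j-i|>N}|b_j-b_i|^{-(\alpha-2)(q-1)}B_{i,j}$ is summed verbatim as there, using (\ref{div.bk}) and (\ref{bi-Lip}), to give $C_{\rm g}L^{2q-\alpha q+2}K^2 n^{-(2q-\alpha q+1)}$. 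When $\alpha\leq2$ the $K^\alpha$ prefactor instead gives $B_{i,j}\leq C_{\rm g}K^\alpha|b_j-b_i|^{-1}\max_{k}|b_{k+1}-b_k|^3\leq C_{\rm g}L^2K^\alpha/(|j-i|n^2)$; summing in $j$ produces a harmonic sum of order $\log n$ and summing in $i$ an extra factor $n$, hence $C_{\rm g}L^2K^\alpha\log n/n$. A final remark: because the factored form of $B_{i,j}$ carries only a single numerator (in contrast to $A_{i,j}$), just the $K^\alpha$-term survives for $\alpha\leq2$, and the $L^{4-\alpha}K^2$-term appearing in the estimate for $A_{i,j}$ is absent here.
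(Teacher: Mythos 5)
Your proposal is correct and follows essentially the same route as the paper: factor out the common nonnegative numerator $|b_j-b_i|^\alpha-|\gamma(b_j)-\gamma(b_i)|^\alpha$, bound it via (\ref{alpha2more--est.}) (resp.\ (\ref{alpha2less--est.}) for $\alpha\leq 2$), control the difference of reciprocals by putting it over a common denominator and splitting the product difference with (\ref{A--pre.est.1}), (\ref{A--pre.est.2}), (\ref{bi-Lip}), (\ref{tsleqbjbi}), (\ref{tsgeqbjbi}), and then sum using (\ref{div.bk}) and (\ref{bi-Lip}). The pointwise bounds you reach ($C_{\rm g}K^2|b_j-b_i|^{1-\alpha}\max_k|b_{k+1}-b_k|^3$, and $C_{\rm g}K^\alpha|b_j-b_i|^{-1}\max_k|b_{k+1}-b_k|^3$ for $\alpha\leq 2$) and the resulting sums coincide with the paper's.
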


\begin{proof}
Since we have
\begin{align*}
	&\ 
	|
	 | \gamma ( b_j ) - \gamma ( b_i ) |^\alpha | b_j - b_i |^\alpha 
	 -
	 | \gamma (t) - \gamma (s) |^\alpha | t-s |^\alpha
	| 
\\
	\leq &\ 
	C_{ \text g }
	| b_j - b_i |^{ 2 \alpha - 1 }
	\max_{ k =  1 , \ldots , n }
	| b_{ k+1 } - b_k |
\end{align*}
using (\ref{A--pre.est.1}) and (\ref{A--pre.est.2}), we have
\begin{eqnarray}
	\lefteqn{
	\int_{ b_j }^{ b_{ j+1 } }
	\int_{ b_i }^{ b_{ i+1 } }
	\frac
	{ 
	|
	 | \gamma ( b_j ) - \gamma ( b_i ) |^\alpha | b_j - b_i |^\alpha 
	 -
	 | \gamma (t) - \gamma (s) |^\alpha | t-s |^\alpha
	|
	}
	{ 
	| \gamma (t) - \gamma (s) |^\alpha | t-s |^\alpha 
	| \gamma ( b_j ) - \gamma ( b_i ) |^\alpha | b_j - b_i |^\alpha
	}
	dsdt
	}
\nonumber\\
	& \leq &
	C_{\text g}
	\int_{ b_j }^{ b_{ j+1 } }
	\int_{ b_i }^{ b_{ i+1 } }
	\frac
	{
	| b_j - b_i |^{ \alpha - 1 }
	\max_{ k =  1 , \ldots , n }
	| b_{ k+1 } - b_k |
	}
	{ 
	| \gamma (t) - \gamma (s) |^\alpha | t-s |^\alpha 
	| \gamma ( b_j ) - \gamma ( b_i ) |^\alpha
	}
	dsdt
\nonumber\\
	& \overset{\substack{(\ref{div.bk}),\ (\ref{bi-Lip}),\\(\ref{tsleqbjbi}),\ (\ref{tsgeqbjbi})}}{\leq} &
	C_{\text g}
	\frac{ \max_{ k=1,\ldots,n } | b_{k+1} - b_k |^3 }
	{ | b_j - b_i |^{ 2 \alpha +1 } },
\nonumber
\end{eqnarray}
and therefore, using (\ref{alpha2more--est.}),
we have
\[
	B_{i,j}
	\leq
	C_{ \text g }
	K^2
	| b_j - b_i |^{1- \alpha }
	\max_{ k =  1 , \ldots , n }
	| b_{ k+1 } - b_k |^3.
\]
Hence, by (\ref{div.bk}) and (\ref{bi-Lip}), we get
\[
	\sum_{ i=1 }^n \sum_{ | j-i | > N }
	| b_j-b_i |^{ -( \alpha -2 )(q-1) }
	B_{i,j}
	\leq
	C_{ \text g }
	L^{ 2q - \alpha q +2 } K^2
	\frac{1}{ n^{ 2q- \alpha q+1} }.
\]

If $ \alpha \leq 2 $, using (\ref{alpha2less--est.}) instead of (\ref{alpha2more--est.}), similarly we have
\[
	B_{i,j}
	\leq
	C_{\text g} | b_j-b_i |^{-1}
	\max_{k=1, \ldots ,n} | b_{k+1}-b_k |.
\]
Therefore, by (\ref{div.bk}) and (\ref{bi-Lip}), we get
\[
	\sum_{ i=1 }^n \sum_{ | j-i | > N }
	B_{i,j}
	\leq
	C_{ \text g }
	L^2 K^\alpha
	\frac{ \log n }{ n }.
\]
\end{proof}

\begin{prop}\label{C--est.}
We have
\[
	\sum_{ i=1 }^n \sum_{ | j-i | > N }
	| b_j-b_i |^{ -( \alpha -2)(q-1) }
	C_{i,j}
	\leq
	C_{ \mathrm{g} }
	L^{2q- \alpha q+2} K^2
	\frac{1}{ n^{ 2q- \alpha q+1 } }.
\]
Moreover, if $ \alpha \leq 2 $, then we have
\[
	\sum_{i=1}^n \sum_{ |j-i| > N } C_{i,j}
	\leq
	\begin{cases}
		\displaystyle
		C_{\mathrm{g}} L^{4- \alpha } K^2 \frac{1}{n^2}
		& ( 0 < \alpha < 1 ),
	\vspace{2mm}\\
		\displaystyle
		C_{\mathrm{g}} L^3 K^2 \frac{\log n}{n^2}
		& ( \alpha =1 ),
	\vspace{2mm}\\
		\displaystyle
		C_{\mathrm{g}} L^{4- \alpha } K^2 \frac{1}{n^{3- \alpha}}
		& ( 1 < \alpha \leq 2 ).		
	\end{cases}
\]
\end{prop}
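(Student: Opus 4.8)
The plan is to reduce the double sum to a single pointwise estimate for $C_{i,j}$ and then to carry out the index summation as in the proof of Proposition~\ref{B--est.}. Since \ref{ab--bi.Lip.} gives $|a_j-a_i|\le|b_j-b_i|$, the bracketed quantity in the definition of $C_{i,j}$ is nonnegative and equals
\[
	\frac{1}{|a_j-a_i|^\alpha}-\frac{1}{|b_j-b_i|^\alpha}
	=\frac{|b_j-b_i|^\alpha-|a_j-a_i|^\alpha}{|a_j-a_i|^\alpha\,|b_j-b_i|^\alpha}.
\]
For the numerator I would factor out $|b_j-b_i|^\alpha$ and apply \ref{1-xest.} with $x=|a_j-a_i|/|b_j-b_i|$ and $a=\alpha$, which gives
\[
	|b_j-b_i|^\alpha-|a_j-a_i|^\alpha
	\le(\alpha+1)\,|b_j-b_i|^{\alpha-1}\bigl(|b_j-b_i|-|a_j-a_i|\bigr).
\]
The heart of the matter is the arc-versus-chord defect $|b_j-b_i|-|a_j-a_i|$. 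Writing $|a_j-a_i|=\sum_{k=i}^{j-1}|\gamma(b_{k+1})-\gamma(b_k)|$ and $|b_j-b_i|=\sum_{k=i}^{j-1}|b_{k+1}-b_k|$, estimating each segment through \ref{ts--est.} by $|b_{k+1}-b_k|-|\gamma(b_{k+1})-\gamma(b_k)|\le\frac{K^2}{2}|b_{k+1}-b_k|^3$, and then extracting $\max_{k}|b_{k+1}-b_k|^2$ and using $\sum_{k=i}^{j-1}|b_{k+1}-b_k|=|b_j-b_i|$, I obtain
\[
	|b_j-b_i|-|a_j-a_i|
	\le\frac{K^2}{2}\Bigl(\max_{k=1,\ldots,n}|b_{k+1}-b_k|\Bigr)^{2}|b_j-b_i|.
\]

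Combining these two displays with the lower bound $|a_j-a_i|\ge C_b^{-1}|b_j-b_i|$ from \ref{ab--bi.Lip.} in the denominator, I arrive at the pointwise estimate
\[
	C_{i,j}\le C_{\mathrm g}\,K^2\,
	\frac{\bigl(\max_{k=1,\ldots,n}|b_{k+1}-b_k|\bigr)^{2}}{|b_j-b_i|^{\alpha}}\,
	|b_{i+1}-b_i|\,|b_{j+1}-b_j|,
\]
which holds for every $\alpha>0$. For the first inequality I would multiply by the weight $|b_j-b_i|^{-(\alpha-2)(q-1)}$, replace each edge length and each $\max_k|b_{k+1}-b_k|$ by a multiple of $L/n$ via \ref{div.bk} and \ref{bi-Lip}, and replace $|b_j-b_i|$ by a multiple of $|j-i|L/n$; the weighted summand then becomes a constant times $K^2L^{2q-\alpha q+2}\,n^{-(2q-\alpha q+2)}\,|j-i|^{2q-\alpha q-2}$. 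Summation over $i$ contributes a factor $n$, and the tail $\sum_{|j-i|>N}|j-i|^{2q-\alpha q-2}$ is summed by the same elementary argument used in Proposition~\ref{B--est.}, producing the stated bound $C_{\mathrm g}L^{2q-\alpha q+2}K^2\,n^{-(2q-\alpha q+1)}$.

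For the refinement under $\alpha\le2$ the weight is absent, and the same substitutions reduce the summand to a constant times $K^2L^{4-\alpha}\,n^{\alpha-4}\,|j-i|^{-\alpha}$ (using $|b_j-b_i|\ge C_{\mathrm g}|j-i|L/n$). After the factor $n$ from the $i$-summation, everything hinges on the tail $\sum_{N<|j-i|\lesssim n}|j-i|^{-\alpha}$, whose order depends on $\alpha$: it is comparable to $n^{1-\alpha}$ for $0<\alpha<1$, to $\log n$ for $\alpha=1$, and to a constant for $1<\alpha\le2$. Inserting each case and tracking the power of $n$ yields precisely $C_{\mathrm g}L^{4-\alpha}K^2n^{-2}$, $C_{\mathrm g}L^{3}K^2(\log n)n^{-2}$, and $C_{\mathrm g}L^{4-\alpha}K^2n^{-(3-\alpha)}$, which are the three cases in the statement.

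I expect the defect estimate to be the main obstacle. A naive bound $\sum_{k}|b_{k+1}-b_k|^3\le|j-i|\max_k|b_{k+1}-b_k|^3$ would leave a spurious factor $|j-i|$ and raise the exponent of $|b_j-b_i|$ in the denominator, so that the summand would decay one power of $n$ too slowly; the point is instead to extract only $\max_k|b_{k+1}-b_k|^2$ and to recognise the remaining sum as the full arc length $|b_j-b_i|$, which keeps the denominator at exactly $|b_j-b_i|^{\alpha}$. The remaining work is the elementary but case-sensitive evaluation of the tail sum $\sum|j-i|^{-\alpha}$, which is what produces the trichotomy in the $\alpha\le2$ part.
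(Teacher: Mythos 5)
Your argument is correct and follows essentially the same route as the paper: the same pointwise reduction of $C_{i,j}$ to the arc--chord defect $|b_j-b_i|-|a_j-a_i|$, the same key step of extracting only $\max_k|b_{k+1}-b_k|^2$ from $\sum_k|b_{k+1}-b_k|^3$ so that the remaining sum reassembles into $|b_j-b_i|$ (yielding $C_{i,j}\le C_{\mathrm g}K^2|b_j-b_i|^{-\alpha}\max_k|b_{k+1}-b_k|^4$), and the same case analysis of $\sum_k k^{-\alpha}$ for the trichotomy. The only cosmetic differences are that you apply \pref{1-xest.} with exponent $\alpha$ directly rather than with $\alpha/2$ on the squared quantities, and that you should state, as the paper does, the harmless reduction to the case $|b_j-b_i|=\sum_{k=i}^{j-1}|b_{k+1}-b_k|$ (relabelling $\{i,j\}$ as $\{j,i+n\}$ otherwise) before writing the arc length as that telescoping sum.
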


\begin{proof}
We may assume
$ j>i $ because of the symmetry of $i$ and $j$.
Also, since
\[
	| b_j - b_i |
	=
	\min \left\{
	\sum_{k=i}^{j-1} | b_{k+1} - b_k |
	,
	\sum_{k=j}^{i+n-1} | b_{k+1} - b_k |
	\right\},
\]
we may assume
\[
	| b_j - b_i |
	=
	\sum_{k=i}^{j-1} | b_{k+1} - b_k |.
\]
Otherwise, we reduce to the above case by changing
$ \{ j , i+n \} $
with
$ \{ i,j \} $.
In this situation, we have
\begin{eqnarray}
	\lefteqn{
	| b_j - b_i |^\alpha
	-
	| a_j - a_i |^\alpha
	}
\nonumber\\
	& \overset{(\ref{1-xest.})}{\leq} &
	\left( 
	{\frac \alpha 2} + 1 
	\right)
	| b_j - b_i |^{ \alpha - 2 }
	( | b_j - b_i | + | a_j - a_i | )
	| | b_j - b_i | - | a_j - a_i | |
\nonumber\\
	& \overset{(\ref{ts--int.est.})}{\leq} &
	2 \left( \frac \alpha 2 + 1 \right)
	K^2
	| b_j - b_i |^\alpha
	\max_{ k = 1 , \ldots , n }
	| b_{ k+1 } - b_k |^2.
\nonumber
\end{eqnarray}
Hence, we have
\[
	C_{i,j}
	\overset{(\ref{bi-Lip}),\,(\ref{ab--bi.Lip.})}{\leq}
	C_{ \text g }
	K^2
	| b_j-b_i |^{ -\alpha }
	\max_{ k = 1 , \ldots , n }
	| b_{ k+1 } - b_k |^4.
\]
Therefore,
we get
\[
	\sum_{ i=1 }^n \sum_{ | j-i | > N }
	| b_j-b_i |^{ -( \alpha -2)(q-1) }
	C_{i,j}
	\overset{(\ref{div.bk}),\,(\ref{bi-Lip})}{\leq}
	C_{ \text g }
	L^{2q- \alpha q+2} K^2
	\frac{1}{ n^{ 2q- \alpha q+1 } },
\]
and if $\alpha \leq 2$, since we have
\[	
	\sum_{i=1}^n \sum_{ |j-i| >N } C_{i,j}
	\overset{(\ref{div.bk}),\,(\ref{bi-Lip})}{\leq}
	C_{ \text g }
	L^{4- \alpha } K^2
	\frac{1}{ n^{3- \alpha} }
	\sum_{k=1}^n \frac{1}{k^\alpha},
\]
we get the claim by estimating $ \sum_{k=1}^n 1/k^\alpha $.
\end{proof}

\begin{prop}\label{D--est.}
We have
\[
	\sum_{i=1}^n \sum_{ |j-i| <N }
	| b_j - b_i |^{-(\alpha -2) q} D_{i,j}
	\leq
	C_{ \rm g }
	L^{2q - \alpha q + 4} K^2
	\frac{1}{n^2}.
\]
Moreover, if $\alpha \leq 2$, then we have
\[
	\sum_{i=1}^n \sum_{ |j-i| <N } D_{i,j}
	\leq
	C_{ \rm g } L^4 K^2 \frac{1}{n^2}.
\]
\end{prop}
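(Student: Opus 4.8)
The plan is to exploit the fact that $D_{i,j}$ depends only on the two edges indexed by $i$ and $j$ and is independent of the inner integration, so it suffices to estimate a single term $D_{i,j}$ and then count the pairs $(i,j)$ occurring in the near range. Write $\ell_k := |b_{k+1}-b_k|$ for the parameter length of the $k$-th edge and $c_k := |\gamma(b_{k+1})-\gamma(b_k)|$ for its chord length. First I would factor
\[
	\ell_i\ell_j - c_ic_j = \ell_j(\ell_i-c_i) + c_i(\ell_j-c_j),
\]
and bound each factor. From (\ref{ts--est.}) applied to the endpoints of a single edge, $\ell_k^2 - c_k^2 \le \frac{K^2}{2}\ell_k^4$, so dividing by $\ell_k + c_k \ge \ell_k$ gives $0 \le \ell_k - c_k \le \frac{K^2}{2}\ell_k^3$; moreover (\ref{ab--bi.Lip.}) gives $c_k \le \ell_k$. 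Combining these yields $D_{i,j} = \ell_i\ell_j - c_ic_j \le \frac{K^2}{2}(\ell_i^3\ell_j + \ell_i\ell_j^3)$.

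Next I would insert the uniform edge bound. From (\ref{ab--bi.Lip.}) we have $\ell_k \le C_b c_k$, and (\ref{div.bk}) gives $c_k \le \bar c L/n$, so $\ell_k \le C_b\bar c L/n$ for every $k$; hence $D_{i,j} \le C_{\rm g}K^2 L^4/n^4$. For the weighted sum I also need to control $|b_j-b_i|^{-(\alpha-2)q}$ on the near range. For $1 \le |j-i| < N$ (the case $j=i$ being excluded, as in the energy) and $n$ large, $|b_j-b_i|$ equals the short parameter arc $\sum_{k=i}^{j-1}\ell_k$, whence $cL/n \le |b_j-b_i| \le N C_b\bar c L/n$ (the lower bound because each $\ell_k \ge c_k \ge cL/n$), so $|b_j-b_i|$ is comparable to $L/n$. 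According to the sign of $(\alpha-2)q$ I would use the lower bound (when $\alpha>2$) or the upper bound (when $\alpha<2$), obtaining in either case $|b_j-b_i|^{-(\alpha-2)q} \le C_{\rm g}(L/n)^{-(\alpha-2)q}$.

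Putting these together, a single weighted term is bounded by $C_{\rm g}K^2(L/n)^{4-(\alpha-2)q}$, and since $4-(\alpha-2)q = 2q-\alpha q+4$ this is $C_{\rm g}K^2 L^{2q-\alpha q+4}\,n^{(\alpha-2)q-4}$. The near range contains at most $2Nn$ pairs $(i,j)$, so the full sum is at most $C_{\rm g}K^2 L^{2q-\alpha q+4}\,n^{(\alpha-2)q-3}$; the exponent $(\alpha-2)q-3 = \alpha q-2q-3$ is $\le -2$ precisely because the hypothesis gives $\alpha q < 2q+1$, which yields the claimed $n^{-2}$ bound. For the unweighted statement when $\alpha\le 2$, the same pairwise estimate $D_{i,j} \le C_{\rm g}K^2 L^4/n^4$ summed over the $O(Nn)$ near pairs gives $C_{\rm g}K^2 L^4\,n^{-3} \le C_{\rm g}K^2 L^4\,n^{-2}$.

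The main obstacle I anticipate is the bookkeeping around the sign of $(\alpha-2)q$ in the weight: when $\alpha>2$ the weight grows as $|b_j-b_i|\to 0$, so the argument hinges on the uniform lower bound $|b_j-b_i| \ge cL/n$, which relies on (\ref{div.bk}) together with (\ref{ab--bi.Lip.}) guaranteeing that each parameter gap is bounded below. The only other delicate point is confirming that the final exponent of $n$ comes out $\le -2$, and this is exactly where the standing hypothesis $\alpha q<2q+1$ is used.
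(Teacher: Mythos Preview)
Your pointwise bound $D_{i,j} \leq C_{\rm g} K^2 \max_k |b_{k+1}-b_k|^4 \leq C_{\rm g} K^2 (L/n)^4$ is exactly what the paper derives from (\ref{bi-Lip}) and (\ref{ts--est.}), and your proof of the statement \emph{as written} is correct. Be aware, though, that the condition $|j-i|<N$ in the displayed proposition is a typo: the paper's own proof sums over $|j-i|>N$, and this is the range required by Lemma~\ref{keylem}, where the estimate is actually applied. So you have proved the near-range version, whereas the paper proves (and needs) the far-range one.

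Adapting your argument to the intended far range is straightforward. The pointwise bound on $D_{i,j}$ is unchanged; only the treatment of $\sum_{i}\sum_{|j-i|>N}|b_j-b_i|^{-(\alpha-2)q}$ differs. When $\alpha\le 2$ the weight $|b_j-b_i|^{(2-\alpha)q}$ is bounded above by $C_{\rm g}L^{(2-\alpha)q}$ and there are at most $n^2$ terms. When $\alpha>2$ one instead uses $|b_j-b_i|\geq C_{\rm g}^{-1}|j-i|\,L/n$ (on the short arc) together with $(\alpha-2)q<1$, i.e.\ $\alpha q<2q+1$, to bound $\sum_{k>N} k^{-(\alpha-2)q}\leq C_{\rm g}\,n^{1-(\alpha-2)q}$. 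In either case the double sum is at most $C_{\rm g}\,n^2 L^{-(\alpha-2)q}$, and multiplying by $K^2(L/n)^4$ gives the claimed $C_{\rm g}K^2 L^{2q-\alpha q+4}/n^2$.
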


\begin{proof}
Since we have
\[
	D_{i,j}
	\leq
	C_{ \text g }
	K^2
	\max_{ k = 1 , \ldots , n }| b_{ k+1 } - b_k |^4
\]
using (\ref{bi-Lip}) and (\ref{ts--est.}),
we get
\begin{eqnarray*}
	\lefteqn{
	\sum_{ i=1 }^n \sum_{ | j-i | > N }
	| b_j - b_i |^{-( \alpha -2)q}
	D_{i,j}
	}
\\
	& \leq &
	C_{ \text g }
	K^2
	\max_{ k = 1 , \ldots , n }| b_{ k+1 } - b_k |^4
	\sum_{ i=1 }^n \sum_{ | j-i | > N }
	| b_j - b_i |^{-( \alpha -2)q}
\\
	& \overset{(\ref{div.bk}),\,(\ref{bi-Lip})}{\leq} &
	C_{\text g}
	L^{ 2q- \alpha q +4 } K^2
	\frac{1}{n^2},
\end{eqnarray*}
and we get
\[
	\sum_{i=1}^n \sum_{ |j-i| <N } D_{i,j}
	\leq
	C_{ \text g }
	K^2
	\max_{ k = 1 , \ldots , n }| b_{ k+1 } - b_k |^4
	\sum_{i=1}^n \sum_{ |j-i| <N } 1
	\overset{(\ref{div.bk}),\ (\ref{bi-Lip})}{\leq}
	C_{ \rm g } L^4 K^2 \frac{1}{n^2}
\]
if $ \alpha \leq 2$.
\end{proof}

Using Propositions \ref{A--est.}--\ref{D--est.},
we get
\begin{align*}
	| X-Y |
	& \leq \,
	C_{\text g}
	K^{2( q-1 )}
	\sum_{ i=1 }^n \sum_{ | j-i | > N }
	| b_j - b_i |^{-( \alpha -2 )(q-1) }
	( A_{i,j} + B_{i,j} + C_{i,j} )
\\
	&\,
	\quad+
	C_{\text g}
	K^{2q}
	\sum_{ i=1 }^n \sum_{ | j-i | > N }
	| b_j - b_i |^{-( \alpha -2)q} D_{i,j}
\\
	& \leq \,
	C_{\text g}
	\left(
	L^{2q - \alpha q +2} K^{2q} + L^{2q - \alpha q +4} K^{2q+2} 
	\right)
	\frac{1}{n^{2q - \alpha q +1}}.
\end{align*}
Moreover, in the case where $ \alpha \leq 2 $,
we have
\begin{eqnarray*}
	\lefteqn{
	| X-Y |
	}
\\
	& \leq &
	C_{\text g} K^{ \alpha ( q-1 ) }
	\sum_{ i=1 }^n \sum_{ | j-i | > N }
	( A_{i,j} + B_{i,j} + C_{i,j} )
	+
	C_{\text g} K^{ \alpha q }
	\sum_{ i=1 }^n \sum_{ | j-i | > N } D_{i,j}
\\
	& \leq &
	\left\{
	\begin{array}{l}
		\displaystyle
		C_{\text g}
		\left\{
		L^2 K^{\alpha q} \frac{\log n}{n}
		+ L^{4-\alpha} K^{ \alpha q -\alpha +2 }
		\left( \frac 1 n + \frac{\log n}{n^2} \right)
		+ L^4 K^{\alpha q+2} \frac{1}{n^2}
		\right\}
	\vspace{1mm}\\
		\hspace{8.5cm}
		(0 < \alpha < 1),
	\vspace{2mm}\\
		\displaystyle
		C_{\text g}
		\left\{
		L^2 K^q \frac{\log n}{n}
		+ L^3 K^{q+1}
		\left( \frac 1 n + \frac{1}{n^2} \right)
		+ L^4 K^{q+2} \frac{1}{n^2}
		\right\}\ \ 
		(\alpha=1),
	\vspace{2mm}\\
		\displaystyle
		C_{\text g}
		\left\{
		L^2 K^{\alpha q} \frac{\log n}{n}
		+ L^{4-\alpha} K^{ \alpha q -\alpha +2 }
		\left( \frac 1 n + \frac{1}{n^{3-\alpha}} \right)
		+ L^4 K^{\alpha q+2} \frac{1}{n^2}
		\right\}
	\vspace{1mm}\\
		\hspace{8.5cm}
		(1 < \alpha \leq 2).
	\end{array}
	\right.
\end{eqnarray*}
Thus, we get
\[
	| X-Y |
	\leq
	C_{\text g}
	\left(
	L^2 K^{\alpha q} + L^{4- \alpha} K^{\alpha q - \alpha +2}
	+ L^4 K^{\alpha q+2}
	\right)
	\frac{\log n}{n}.
\]

This completes our proof of Theorem \ref{orderinscribed}.
\qquad \qquad \qquad \qquad \qquad \qquad \quad \ \ \,$\square$


\subsection{Proof of Theorem \ref{inscribed}}\label{pfinscribed}

Set
\[
	\varepsilon_n
	:=
	L^{ \alpha q - 2 } 
	\sum_{ k=1 }^n
	\int_{ b_k }^{ b_{ k+1 } }
	\int_{ b_k }^{ b_{ k+1 } }
	\frac
	{ | \gamma^\prime (v) - \gamma^\prime (v) |^{ 2q } }
	{ | v-u |^{ \alpha q } }
	dudv
\]
for
$ \gamma \in W^{1+\sigma , 2q} ( \mathbb{S}_L , \mathbb{R}^d )$.
Note that
$ \varepsilon_n < \infty $
because
$ \alpha q = 1+2 \sigma q $.
Since
\[
	\mu
	\left(
	\bigcup_{ k=1 }^n
	[ b_k , b_{k+1} ]
	\times
	[ b_k , b_{k+1} ]
	\right)
	\to	0
\]
as $ n \to \infty $,
we have
$ \varepsilon_n \to 0 $
from the absolute continuity of integrals for absolutely integrable functions,
where
$\mu$
is the Lebesgue measure on
$\mathbb{S}_L \times \mathbb{S}_L$.

Using
$ \varepsilon_n $,
set
$
	N_n := n \max 
	\left\{
	\varepsilon_n^{ \frac{ 1 }{ 4 q } } 
	,
	n^{ - \frac{ 1 }{ 6 q } }
	\right\}.
$

\subsubsection{Estimates for the case where $ |j-i| \leq N_n $}

Set
$
	\displaystyle \delta
	= \left( 1 + 2 \frac{ \bar{c} }{ c } \,\right)^{-1}
$.
Let
$ n \in \mathbb{N} $
be sufficiently large
such that $ \{ b_k \} $ satisfies
\[
	| b_{ k+1 } - b_k |
	\leq
	( 1 + \delta )
	| \gamma ( b_{ k+1 } ) - \gamma ( b_k ) |.
\]
Then, since we have
\begin{align*}
	| b_{ k+1 } - b_k |
	& \leq \,
	( 1- \delta +2 \delta )
	| \gamma ( b_{ k+1 } ) - \gamma ( b_k ) |
\\
	& \leq \,
	( 1 - \delta )
	( | b_{ k+1 } - b_k | + | b_k - b_{k-1} | )
\\
	& = \,
	( 1 - \delta ) | b_{ k+1 } - b_{k-1} |
\end{align*}
using (\ref{div.bk}), we have
\begin{equation}
	\delta
	| b_{ j+1 } - b_i |
	\leq 
	| b_{ j+1 } - b_i |
	-
	( 1- \delta )
	| b_{ j+1 } - b_{j-1} |
	\leq
	| b_j - b_i |.
	\label{delta}
\end{equation}
Therefore, we get
\small
\begin{eqnarray*}
	\lefteqn{
	\quad
	\tilde{L}_n^{\alpha q - 2}
	\sum_{ i=1 }^n \sum_{ | j-i | \leq N_n }
	\Manp^q
	| \gamma ( b_{ i+1 } ) - \gamma ( b_i ) |
	| \gamma ( b_{ j+1 } ) - \gamma ( b_j ) |
	}
\\
	& \overset{\substack{(\ref{ab--bi.Lip.}),\,(\ref{div.bk}),\\(\ref{bi-Lip}),\,(\ref{1-xest.})}}{\leq} &
	\tilde{L}_n^{\alpha q - 2}
	C_b^{ \alpha q }
	\frac{ \bar{c} }{ c }
	\left(
	{\frac \alpha 2}
	+ 1
	\right)^q
\\
	& & 
	\times
	\sum_{ i=1 }^n \sum_{ | j-i | \leq N_n }
	\left(
	\frac
	{
	| b_j - b_i |^2
	-
	| \gamma (b_j) - \gamma (b_i) |^2
	}
	{ | b_j - b_i |^{ \alpha + 2 } }
	\right)^q
	| b_i - b_{ i-1 } |
	| b_{ j+1 } - b_j |
\\
	& \overset{(\ref{delta}),\,(\ref{tsleqbjbi})}{\leq} &
	C_{\text g} \tilde{L}_n^{\alpha q - 2}
	\delta^{ -2 q ( \alpha +2 )}
\\
	& & 
	\times
	\sum_{ i=1 }^n \sum_{ | j-i | \leq N_n }
	\int_{ b_j }^{ b_{ j+1 } } \int_{ b_{ i-1 } }^{ b_i } 
	\left(
	\frac{
	\int_s^t \int_s^t 
	| \gamma^\prime ( v ) - \gamma^\prime ( u ) |^2
	dudv}
	{ 2 | t-s |^{ \alpha + 2 } }
	\right)^q
	dsdt
\\
	& \overset{(\ref{1-xest.})}{\leq} &
	C_{\text g}
	L^{\alpha q - 2}
	\delta^{ -2 q ( \alpha +2 )}
	\sum_{ i=1 }^n \sum_{ | j-i | \leq N_n }
	\int_{ b_j }^{ b_{ j+1 } } \int_{ b_{ i-1 } }^{ b_i } 
	\Mag^q
	dsdt
\\
	& \longrightarrow &
	0
\end{eqnarray*}
\normalsize
as $ n \to \infty $.
Here, we have used
\[
	\mu
	\left(
	\bigcup_{ | j-i | \leq N_n }
	[ b_{ i-1 } , b_i ]
	\times
	[ b_j , b_{ j+1 } ]
	\right)
	\leq
	2 C_b^2 \bar{c}^2
	\frac{ 1 }{ n }
	( N_n + 1 )
	\to
	0
\]
and the absolute continuity of the integral.
Also, we have
\[
	L^{\alpha p - 2}
	\sum_{ i=1 }^n \sum_{ | j-i | \leq N }
	\int_{ b_j }^{ b_{ j+1 } } \int_{ b_i }^{ b_{ i+1 } }
	\Mag^q
	dsdt
	\to
	0,
\]
which follows easily from the absolute continuity of the integral.

\subsubsection{Estimates for the case where $ |j-i| > N_n $}

First, by estimates (\ref{div.bk}) and (\ref{bi-Lip}), we have
\begin{equation}
	| \gamma ( b_j ) - \gamma ( b_i ) |
	\geq
	C_b c L \frac{ N_n }{ n }.
	\label{>Nn--est.}
\end{equation}

The strategy of the proof of Theorem \ref{inscribed} is as follows.
Note that
\begin{eqnarray*}
	\lefteqn{
	L - \tilde{L}_n
	}
\\
	& = &
	\sum_{ k=1 }^n
	(
	| b_{ k+1 } - b_k |
	-
	| \gamma ( b_{ k+1 } ) - \gamma ( b_k ) |
	)
\\
	& \overset{(\ref{ts--int.est.})}{\leq} &
	\frac{1}{2}
	\sum_{ k=1 }^n
	| b_{ k+1 } - b_k |^{ \alpha + 1 - 2/q }
	\left(
	\int_{ b_k }^{ b_{ k+1 } } \int_{ b_k }^{ b_{ k+1 } }
	\frac
	{ | \gamma ' (v) - \gamma ' (u) |^{ 2q } }
	{ | v-u |^{ \alpha q } }  dudv
	\right)^{ 1/q }
\\
	& \overset{(\ref{div.bk})}{\leq} &
	C_{\text g}
	L
	\frac{ 1 }{ n^{ \alpha - 1/q } }
	\varepsilon_n^{ 1/q }
\end{eqnarray*}
by H\"{o}lder's inequality, and
\begin{eqnarray*}
	|Y|
	& \leq &
	\sum_{ i=1 }^n  \sum_{ | j-i | > N_n }
	\frac{ 1 }{ | \gamma ( b_j ) - \gamma ( b_i ) |^{ \alpha q } }
	| \gamma ( b_{ i+1 } ) - \gamma ( b_i ) |
	| \gamma ( b_{ j+1 } ) - \gamma ( b_j ) |
\\
	& \overset{(\ref{>Nn--est.})}{\leq} &
	C_{ \text g } L^{ - \alpha q }
	\left(
	\frac{ n }{ N_n }
	\right)^{ \alpha q }
	\sum_{ i=1 }^n  \sum_{ | j-i | > N_n }
	| \gamma ( b_{ i+1 } ) - \gamma ( b_i ) |
	| \gamma ( b_{ j+1 } ) - \gamma ( b_j ) |
\\
	& \overset{(\ref{div.bk})}{\leq} &
	C_{ \text g }
	L^{ 2 - \alpha q } n^{ \alpha / 6 }.
\end{eqnarray*}
Then, if
$ | X-Y | \to 0 $
as $ n \to \infty $,
we get
\begin{align*}
	| L^{ \alpha q -2 } X - \tilde{L}_n^{ \alpha q -2 } Y |
	\,\leq \,&\ 
	L^{ \alpha q -2 }
	| X-Y |
	+
	( \alpha q - 1 )
	L^{ \alpha q - 3 }
	( L - \tilde{L}_n ) | Y |
\\
	\,\leq \,&\ 
	C_{ \text g }
	\left(
	L^{ \alpha q -2 }
	| X-Y |
	+
	\frac{ 1 }{ n^{ \frac{ 5 }{ 6 } \alpha - \frac{ 1 }{ q } } }
	\varepsilon_n^{ 1/q }
	\right)
\\	
	\,\,\to &\ 
	0,
\end{align*}
and Theorem \ref{inscribed} will be proved.
Here, we  have used
$ \displaystyle \frac 5 6 \alpha - \frac 1 q > 0$.

Thus, it suffices to prove
\[
	| X-Y | \to 0
\]
as $ n \to \infty $.
To this end,
observe that we have
\begin{eqnarray*}
	\lefteqn{| X-Y |}
\\
	& \overset{(\ref{1-xest.})}{\leq} &
	( q+1 )
	\sum_{ i=1 }^n \sum_{ | j-i | > N }
	\int_{ b_j }^{ b_{ j+1 } } \int_{ b_i }^{ b_{ i+1 } }
	\max
	\{
	| \mathcal{M}^\alpha ( \gamma ) |
	,
	| \mathcal{M}_n^\alpha ( p_n ) |
	\}^{ q-1 }
\nonumber\\
	& &
	\qquad \qquad \qquad \qquad \qquad \qquad \qquad \qquad
	\times\left|
	\mathcal{M}^\alpha ( \gamma )
	-
	\mathcal{M}_n^\alpha ( p_n )
	\right|
	dsdt
\nonumber\\
	& &
	+
	\sum_{ i=1 }^n \sum_{ | j-i | > N }
	\left|
	\mathcal{M}_n^\alpha ( p_n )
	\right|^q
\nonumber\\
	& &
	\qquad \quad
	\times
	\left|
	| b_{i+1} - b_i |
	| b_{j+1} - b_j |
	-
	| \gamma ( b_{i+1} ) - \gamma ( b_i ) |
	| \gamma ( b_{j+1} ) - \gamma ( b_j ) |
	\right|
\\
	& \leq &
	C_{ \text g }
	\sum_{ i=1 }^n \sum_{ | j-i | > N }
	\frac{ 1 }{ | \gamma ( b_j ) - \gamma ( b_i ) |^{ \alpha ( q-1 ) } }
	(
	A_{ i,j }
	+
	B_{ i,j }
	+
	C_{ i,j }
	)
\\
	& &
	+\,C_{\text g}
	\sum_{ i=1 }^n \sum_{ | j-i | > N }
	\frac{ 1 }{ | \gamma ( b_j ) - \gamma ( b_i ) |^{ \alpha q } }
	D_{i,j}
\\
	& \overset{(\ref{>Nn--est.})}{\leq} &
	C_{ \text g }
	L^{ - \alpha ( q-1 ) }
	\sum_{ i=1 }^n \sum_{ | j-i | > N }
	\max\left\{ 
	\varepsilon_n^{ \frac{ 1 }{ 4q } }
	,
	n^{ - \frac{ 1 }{ 6q } }
	\right\}^{ - \alpha ( q-1 ) } 
	(
	A_{ i,j }
	+
	B_{ i,j }
	+
	C_{ i,j }
	)
\\
	& &
	+\,C_{\text g}
	L^{ - \alpha q }
	\sum_{ i=1 }^n \sum_{ | j-i | > N }
	\max\left\{ 
	\varepsilon_n^{ \frac{ 1 }{ 4q } }
	,
	n^{ - \frac{ 1 }{ 6q } }
	\right\}^{ - \alpha q }
	D_{i,j}.
\end{eqnarray*}
We estimate these summations.

\begin{prop}\label{AB--est.2}
We have
\[
	\displaystyle
	\sum_{ i=1 }^n \sum_{ | j-i | > N }
	\max\left\{ 
	\varepsilon_n^{ \frac{ 1 }{ 4q } }
	,
	n^{ - \frac{ 1 }{ 6q } }
	\right\}^{ - \alpha ( q-1 ) } 
	( A_{i,j} + B_{i,j} )
	\leq
	C_{\rm g}
	L^{2- \alpha }
	\frac{ 1 }
	{ 
	n^{ 1- (\alpha q +1) / (6q) }
	}.
\]
\end{prop}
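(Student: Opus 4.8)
The plan is to reduce the statement to a summation estimate and then run the argument of Propositions \ref{A--est.} and \ref{B--est.} again, but in the low-regularity class $W^{1+\sigma,2q}$ where the pointwise curvature bound $K$ is no longer available. First I would observe that the factor $\max\{\varepsilon_n^{1/(4q)},n^{-1/(6q)}\}^{-\alpha(q-1)}$ does not depend on $(i,j)$, so I pull it out of the double sum. Since $\max\{\varepsilon_n^{1/(4q)},n^{-1/(6q)}\}\ge n^{-1/(6q)}$ and the exponent $-\alpha(q-1)$ is nonpositive, this factor is bounded above by $n^{\alpha(q-1)/(6q)}$ (when the $\varepsilon_n$-branch dominates the pulled-out factor is only smaller, so the resulting bound still holds). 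It therefore suffices to estimate $\sum_{i=1}^n\sum_{|j-i|>N_n}(A_{i,j}+B_{i,j})$, recalling that in the present subsection the relevant range is $|j-i|>N_n$ with $N_n=n\max\{\varepsilon_n^{1/(4q)},n^{-1/(6q)}\}$, and then to multiply back by $n^{\alpha(q-1)/(6q)}$.

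Next I would re-derive per-term bounds for $A_{i,j}$ and $B_{i,j}$. The estimates used in Propositions \ref{A--est.}--\ref{B--est.} rely on (\ref{alpha2more--est.})/(\ref{alpha2less--est.}), which require $\gamma\in C^{1,1}$; here I replace the role of $K$ by the integral inequality (\ref{ts--int.est.}) combined with (\ref{1-xest.}) and H\"older's inequality, and then bound the resulting Sobolev--Slobodeckij density by the \emph{finite} global seminorm of $\gamma\in W^{1+\sigma,2q}$, absorbing it into $C_{\rm g}$ (one must keep track of the length-dimension carried by this seminorm, namely $L^{2/q-\alpha}$, in order to land on the correct power $L^{2-\alpha}$). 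For $B_{i,j}$ this is direct: the numerator $|b_j-b_i|^\alpha-|\gamma(b_j)-\gamma(b_i)|^\alpha$ is constant on the cell and is controlled via (\ref{ts--int.est.})--(\ref{1-xest.}), while the difference of the two denominators contributes one extra factor of the mesh as in the proof of Proposition \ref{B--est.}. For $A_{i,j}$ I must retain the cancellation structure: I keep the corner-strip decomposition leading to (\ref{A--pre.est.0}) for $|t-s|^2-|\gamma(t)-\gamma(s)|^2$ minus its $(b_i,b_j)$-analogue, but instead of bounding the integrand $\tfrac12|\gamma'(v)-\gamma'(u)|^2$ by $\tfrac12 K^2|v-u|^2$ I apply H\"older to introduce the density $|\gamma'(v)-\gamma'(u)|^{2q}/|v-u|^{\alpha q}$ on each strip and again bound it by the global seminorm. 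The target of this step is $A_{i,j},B_{i,j}\le C_{\rm g}\,|b_j-b_i|^{-2/q-1}\max_{k}|b_{k+1}-b_k|^{3}$.

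With these bounds in hand I would carry out the summation using (\ref{div.bk}) and (\ref{bi-Lip}) to write $|b_j-b_i|\sim|j-i|L/n$ and $\max_k|b_{k+1}-b_k|\sim L/n$, so that $\sum_{i=1}^n\sum_{|j-i|>N_n}|b_j-b_i|^{-2/q-1}\max_k|b_{k+1}-b_k|^3$ reduces to $(L/n)^{2-2/q}\,n\sum_{m>N_n}m^{-2/q-1}\sim (L/n)^{2-2/q}\,n\,N_n^{-2/q}$. Substituting $N_n=n\max\{\varepsilon_n^{1/(4q)},n^{-1/(6q)}\}$ and multiplying by the pulled-out weight $n^{\alpha(q-1)/(6q)}$, the several powers of $\max\{\varepsilon_n^{1/(4q)},n^{-1/(6q)}\}$ combine into a single negative power, which I then dominate once more by $\max\{\varepsilon_n^{1/(4q)},n^{-1/(6q)}\}\ge n^{-1/(6q)}$. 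Collecting the powers of $n$ and $L$ yields $C_{\rm g}\,L^{2-\alpha}\,n^{-(1-(\alpha q+1)/(6q))}$, and the exponent is positive because $\alpha q<2q+1\le 6q-1$ for $q\ge 1$.

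The main obstacle is the low-regularity estimate for $A_{i,j}$. The naive route — bounding $A_{i,j}$ by the triangle inequality as the sum of the two error terms with a common denominator — only produces $\max_k|b_{k+1}-b_k|^{2}$ rather than the cube, and one checks that with this weaker bound the sum fails to converge once the growing factor $n^{\alpha(q-1)/(6q)}$ is reinstated; hence the corner-strip cancellation of (\ref{A--pre.est.0}) really must be reproduced with the Sobolev--Slobodeckij density in place of $K$. A secondary delicate point is the balancing in the final step: the decay gained from restricting to $|j-i|>N_n$ competes directly against the divergence of the pulled-out weight, and it is precisely the calibrated exponents $1/(4q)$ and $1/(6q)$ in the definition of $N_n$ that make the decay win while simultaneously leaving room for the companion estimate of the $D_{i,j}$-sum, which is instead controlled through the $\varepsilon_n$-branch.
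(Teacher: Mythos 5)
Your overall skeleton (pull the $(i,j)$-independent weight out of the sum and bound it by $n^{\alpha(q-1)/(6q)}$, derive per-cell bounds for $A_{i,j}$ and $B_{i,j}$ without the curvature $K$, then sum) matches the paper's, and your treatment of $B_{i,j}$ is fine --- indeed the combination of (\ref{1-xest.}) with (\ref{ts--int.est.}) and the denominator-difference estimate does give $B_{i,j}\leq C_{\rm g}\,\mathcal{S}^{1/q}\max_k|b_{k+1}-b_k|^3\,|b_j-b_i|^{-1-2/q}$ with $\mathcal{S}$ the global Sobolev--Slobodeckij seminorm, and your final summation $\sum_{m>N_n}m^{-2/q-1}\sim N_n^{-2/q}$ closes correctly from that bound. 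The gap is in $A_{i,j}$. On the corner strips of (\ref{A--pre.est.0}), applying H\"older against the density $|\gamma'(v)-\gamma'(u)|^{2q}/|v-u|^{\alpha q}$ forces you to bound the conjugate factor by $|v-u|\leq|t-s|$ over a strip of measure $|t-s|\,|t-b_j|$, yielding a strip estimate of order $\mathcal{S}^{1/q}|t-s|^{\alpha+1-1/q}|t-b_j|^{1-1/q}$. In the regime $|j-i|>N_n$, where $|t-s|\gg\max_k|b_{k+1}-b_k|$, this is \emph{strictly worse} than the trivial bound $2|t-s|\,|t-b_j|$ (the ratio is $\mathcal{S}^{1/q}(|t-s|^{\alpha q-1}/|t-b_j|)^{1/q}\gg1$ since $\alpha q\geq 2$). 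Propagating it through the decomposition of the numerator of $A_{i,j}$ gives a per-cell bound of order $\max_k^{3-1/q}|b_j-b_i|^{-1-1/q}$ rather than your stated target $\max_k^{3}|b_j-b_i|^{-1-2/q}$, and the corresponding weighted sum behaves like $n^{-1+1/q+\alpha(q-1)/(6q)+o(1)}$, which for $q$ near $1$ does not even tend to $0$. So the method you describe does not deliver the bound you claim for the $A$-term.

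The fix is the observation you are working hard to avoid: on the corner strips one needs \emph{no} regularity of $\gamma'$ at all. Since $\gamma$ is parametrized by arc-length, $1-\langle\gamma'(v),\gamma'(u)\rangle=\tfrac12|\gamma'(v)-\gamma'(u)|^2\leq 2$, so each strip integral is bounded by twice its area, which is estimate (\ref{pre--AB--est.}): the difference of the squared chord defects is $\leq C_{\rm g}(|t-s|\,|t-b_j|+|b_j-b_i|\,|s-b_i|)$. Combining this with (\ref{1-xest.}), (\ref{A--pre.est.2}), (\ref{tsleqbjbi}), (\ref{tsgeqbjbi}) gives $A_{i,j}\leq C_{\rm g}\max_k|b_{k+1}-b_k|^3\,|b_j-b_i|^{-\alpha-1}$, after which one does not even need the series $\sum_m m^{-\alpha-1}$: the crude step $|b_j-b_i|\geq C_b c\,L N_n/n\geq C L n^{-1/(6q)}$ applied termwise, followed by multiplying by the $n^2$ pairs and the weight $n^{\alpha(q-1)/(6q)}$, already lands exactly on $C_{\rm g}L^{2-\alpha}n^{-1+(\alpha q+1)/(6q)}$. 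Your instinct that the cancellation structure of (\ref{A--pre.est.0}) must be retained is right; the error is in replacing the constant bound on the strip integrand by a fractional-Sobolev bound, which only pays off on near-diagonal squares, not on the long thin corner strips.
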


\begin{proof}
Note that
\begin{eqnarray}
	\lefteqn{
	|
	 ( | t-s |^2 - | \gamma (t) - \gamma (s) |^2 )
	 -
	 ( | b_j - b_i |^2 - | \gamma ( b_j ) - \gamma ( b_i ) |^2 )
	|}
\nonumber\\
	& = &
	\left|
	\int_A
	( 1- \langle \gamma^\prime (v) , \gamma^\prime (u) \rangle)
	dudv
	-
	\int_B
	( 1- \langle \gamma^\prime (v) , \gamma^\prime (u) \rangle)
	dudv
	\right|
\nonumber\\
	& \leq &
	C_{\text g}
	| t-s | | t -b_j | + | b_j-b_i | | s- b_i |,
	\label{pre--AB--est.}
\end{eqnarray}
where
\begin{align*}
	A &:=\,
	( [s,t] \times [b_j,t] ) \cup ( [b_j,t] \times [s,t] ),
\\
	B &:=\,
	( [b_i,b_j] \times [b_i,s] ) \cup ( [b_i,s] \times [b_i,b_j] ).
\end{align*}
Now, we have
\begin{eqnarray*}
	\lefteqn{
	A_{i,j}
	\overset{\substack{(\ref{1-xest.}),\,(\ref{bi-Lip}),\\(\ref{pre--AB--est.}),\,(\ref{A--pre.est.2})}}{\leq}
	C_{\text g}
	\int_{ b_j }^{ b_{j+1} } \int_{ b_i }^{ b_{i+1} }
	\frac{
	| b_j-b_i |^{\alpha -1} \max_{k=1, \ldots ,n} | b_{k+1} - b_k |
	}{ |t-s|^\alpha }
	dsdt
	}
\\
	& &
	+\,C_{\text g}
	\int_{ b_j }^{ b_{j+1} } \int_{ b_i }^{ b_{i+1} }
	\frac{
	| b_j-b_i |^\alpha \max_{k=1, \ldots ,n} | b_{k+1} - b_k |
	}{ |t-s|^{2 \alpha +1} }
	dsdt
\\
	& &
	+\,C_{\text g}
	\int_{ b_j }^{ b_{j+1} } \int_{ b_i }^{ b_{i+1} }
	\frac{
	| b_j-b_i |^{\alpha +1} \max_{k=1, \ldots ,n} | b_{k+1} - b_k |
	}{ |t-s|^{2 \alpha +2} }
	dsdt
\\
	& \overset{(\ref{tsleqbjbi}),\,(\ref{tsgeqbjbi})}{\leq} &
	C_{\text g}
	\frac{ \max_{k=1, \ldots ,n} | b_{k+1}-b_k |^3 }{ | b_j-b_i |^{\alpha +1} }.
\end{eqnarray*}
Also, we have
\[
	B_{i,j}
	\leq
	C_{\text g}
	\frac{ \max_{k=1, \ldots ,n} | b_{k+1}-b_k |^3 }{ | b_j-b_i |^{\alpha +1} }
\]
using Lemma \ref{A--pre.est.12}, (\ref{bi-Lip}), and (\ref{tsgeqbjbi}).
Then, we get
\[
	A_{i,j} + B_{i,j}
	\leq
	C_{\text g}
	L^{2- \alpha} \frac 1 { n^{ 3-( \alpha +1)/6q } }
\]
using (\ref{>Nn--est.}), (\ref{bi-Lip}), and (\ref{div.bk}).
Therefore, we obtain
\[
	\sum_{ i=1 }^n \sum_{ | j-i | > N_n }
	\max\left\{ 
	\varepsilon_n^{ \frac{ 1 }{ 4q } }
	,
	n^{ - \frac{ 1 }{ 6q } }
	\right\}^{ - \alpha ( q-1 ) } 
	( A_{i,j} + B_{i,j} )
	\leq
	C_{ \text g } L^{ 2 - \alpha }
	\frac{ 1 }
	{ 
	n^{ 1 - (\alpha+1) / (6q) }
	}.
\]
\end{proof}

Using (\ref{>Nn--est.}), we obtain
\[
	C_{i,j}
	\leq
	C_{\text g} L^{2-\alpha} \frac 1 {n^{ \alpha +2-2/q }}
	\varepsilon_n^{(4- \alpha)/(4q)},\ \ 
	D_{i,j}
	\leq
	C_{\text g} L^2 \frac 1 { n^{\alpha +2-2/q} }
	\varepsilon_n^{1/q}.
\]
Therefore, we can show the next lemma in a similar manner to the proof of Proposition \ref{AB--est.2}.

\begin{prop}\label{D--est.2}
We have
\[
	\displaystyle
	\sum_{ i=1 }^n \sum_{ | j-i | > N_n } 
	\max\left\{ 
	\varepsilon_n^{ \frac{ 1 }{ 4q } }
	,
	n^{ - \frac{ 1 }{ 6q } }
	\right\}^{ - \alpha ( q-1 ) }
	C_{i,j}
	\leq
	C_{ \rm g }
	L^{ 2 - \alpha }
	\varepsilon_n^{\ (5 \alpha q -8) / (4q)},
\]
and
\[
	\displaystyle
	\sum_{ i=1 }^n \sum_{ | j-i | > N }
	\max\left\{ 
	\varepsilon_n^{ \frac{ 1 }{ 4q } }
	,
	n^{ - \frac{ 1 }{ 6q } }
	\right\}^{ - \alpha q } 
	D_{i,j}
	\leq
	C_{\rm g}
	L^2
	\varepsilon_n^{\ (5 \alpha q -8)/(4q)}.
\]
\end{prop}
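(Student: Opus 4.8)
The plan is to prove both estimates in exact parallel with Proposition~\ref{AB--est.2}, now feeding in the two pointwise bounds
\[
	C_{i,j}\le C_{\mathrm{g}}\,L^{2-\alpha}\,\frac{1}{n^{\,\alpha+2-2/q}}\,\varepsilon_n^{(4-\alpha)/(4q)},
	\qquad
	D_{i,j}\le C_{\mathrm{g}}\,L^{2}\,\frac{1}{n^{\,\alpha+2-2/q}}\,\varepsilon_n^{1/q}
\]
recorded just before the statement; these already encode the lower bound \pref{>Nn--est.} for $|\gamma(b_j)-\gamma(b_i)|$, which is available precisely because $|j-i|>N_n$. Since both bounds are uniform in the pair $(i,j)$ and, for each of the $n$ admissible values of $i$, there are at most $n$ admissible $j$, the double sums run over at most $n^2$ terms. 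Thus I would bound each sum by $n^2$ times the weight $\max\{\varepsilon_n^{1/(4q)},n^{-1/(6q)}\}^{-\alpha(q-1)}$ (with the exponent $-\alpha(q-1)$ replaced by $-\alpha q$ for the $D$-sum) times the corresponding pointwise bound, exactly as in the proof of Proposition~\ref{AB--est.2}.

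Writing $M_n:=\max\{\varepsilon_n^{1/(4q)},n^{-1/(6q)}\}$ so that $N_n=nM_n$, the $C_{i,j}$-sum is then controlled by
\[
	\sum_{i=1}^{n}\sum_{|j-i|>N_n}M_n^{-\alpha(q-1)}C_{i,j}
	\le
	C_{\mathrm{g}}\,L^{2-\alpha}\,n^{-\alpha+2/q}\,M_n^{-\alpha(q-1)}\,\varepsilon_n^{(4-\alpha)/(4q)},
\]
and the entire difficulty is to turn the two non-$\varepsilon_n$ factors on the right into powers of $\varepsilon_n$. I would use $M_n\ge\varepsilon_n^{1/(4q)}$ (legitimate since $\alpha(q-1)\ge0$) to replace $M_n^{-\alpha(q-1)}$ by $\varepsilon_n^{-\alpha(q-1)/(4q)}$, and I would dispose of $n^{-\alpha+2/q}=(n^{-1})^{(\alpha q-2)/q}$ --- whose exponent is $\ge0$ because $\alpha q\ge2$ --- through the two inequalities built into $N_n$. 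In the branch where $\varepsilon_n^{1/(4q)}$ realises the maximum one has $n^{-1}\le\varepsilon_n^{3/2}$, hence $n^{-\alpha+2/q}\le\varepsilon_n^{3(\alpha q-2)/(2q)}$; collecting the three exponents of $\varepsilon_n$,
\[
	\frac{3(\alpha q-2)}{2q}-\frac{\alpha(q-1)}{4q}+\frac{4-\alpha}{4q}=\frac{5\alpha q-8}{4q},
\]
which is the claimed power. In the complementary branch $M_n=n^{-1/(6q)}$ one instead trades the residual factors for powers of $n$ via $n^{-1}\le M_n^{6q}$ and checks that the outcome is again dominated by $\varepsilon_n^{(5\alpha q-8)/(4q)}$. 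The computation for the $D_{i,j}$-sum is verbatim the same, with the weight exponent $-\alpha(q-1)$ replaced by $-\alpha q$ and $\varepsilon_n^{(4-\alpha)/(4q)}$ replaced by $\varepsilon_n^{1/q}$; it yields the same power of $\varepsilon_n$ with prefactor $L^2$.

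Finally I would note that the exponent is genuinely useful: $\alpha q\ge2$ forces $5\alpha q-8\ge2>0$, so $(5\alpha q-8)/(4q)>0$ and both sums tend to $0$ as $n\to\infty$, which is all that the proof of Theorem~\ref{inscribed} requires. The step I expect to be the main obstacle is precisely this exponent bookkeeping: because $N_n$ is defined through a maximum, the conversion of powers of $n$ into powers of $\varepsilon_n$ splits according to which of $\varepsilon_n^{1/(4q)}$ and $n^{-1/(6q)}$ is active, and one must verify that \emph{both} branches land at or below the single stated power $\varepsilon_n^{(5\alpha q-8)/(4q)}$, using $\alpha q\ge2$ throughout (and, should the crude count over $n^2$ terms prove too lossy in the second branch, keeping the $|b_j-b_i|^{-(\alpha+1)}$ decay and summing the convergent series $\sum_k k^{-(\alpha+1)}$ in place of the uniform estimate).
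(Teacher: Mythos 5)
Your overall strategy --- feed the two pointwise bounds into a crude $n^2$ count and then convert the leftover powers of $n$ and of $M_n:=\max\{\varepsilon_n^{1/(4q)},n^{-1/(6q)}\}$ into powers of $\varepsilon_n$ --- is exactly what the paper intends (it gives no proof beyond ``in a similar manner to Proposition \ref{AB--est.2}''), and your first branch is carried out correctly: when $M_n=\varepsilon_n^{1/(4q)}$ one has $n^{-1}\leq\varepsilon_n^{3/2}$ and your exponent arithmetic lands on $(5\alpha q-8)/(4q)$ on the nose. The genuine gap is the second branch, which you flag as the main obstacle but do not carry out, and which in fact does not close as stated. When $M_n=n^{-1/(6q)}$, substituting $n^{-1}=M_n^{6q}$ turns your bound for the $C$-sum into $C_{\mathrm g}L^{2-\alpha}M_n^{5\alpha q+\alpha-12}\varepsilon_n^{(4-\alpha)/(4q)}$, and dominating this by $\varepsilon_n^{(5\alpha q-8)/(4q)}$ amounts to $M_n^{5\alpha q+\alpha-12}\leq C\,\varepsilon_n^{(5\alpha q+\alpha-12)/(4q)}$. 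For $5\alpha q+\alpha>12$ --- which covers most of the admissible range, e.g.\ $\alpha=q=2$ --- this would require $M_n\leq C\varepsilon_n^{1/(4q)}$, whereas in this branch the defining inequality is the reverse, $n^{-1/(6q)}\geq\varepsilon_n^{1/(4q)}$, with no control on the ratio: if $\varepsilon_n$ decays much faster than $n^{-2/3}$, the left side is a fixed negative power of $n$ while the right side is as small as one likes. So the second branch cannot simply be ``checked'' to land at the stated pure power of $\varepsilon_n$; the step you deferred is precisely the one that fails.

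What does work in that branch is the opposite conversion: since $4-\alpha>0$ (because $\alpha<2+1/q$) and $\varepsilon_n\leq n^{-2/3}$ there, one gets
\[
	M_n^{5\alpha q+\alpha-12}\,\varepsilon_n^{(4-\alpha)/(4q)}
	\leq
	n^{-(5\alpha q+\alpha-12)/(6q)}\,n^{-(4-\alpha)/(6q)}
	=
	n^{-(5\alpha q-8)/(6q)}.
\]
Combining the two branches yields $C_{\mathrm g}L^{2-\alpha}M_n^{5\alpha q-8}$, i.e.\ a bound of the form $\varepsilon_n^{(5\alpha q-8)/(4q)}+n^{-(5\alpha q-8)/(6q)}$, and the same for the $D$-sum with prefactor $L^2$. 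This is weaker than the Proposition's stated pure-$\varepsilon_n$ bound (which records only what the first branch produces), but since $5\alpha q-8\geq 2>0$ it still tends to $0$ as $n\to\infty$, which is all that the proof of Theorem \ref{inscribed} uses. To make your argument complete you should prove this corrected two-branch bound rather than assert the single-power form; your parenthetical fallback (keeping the $|b_j-b_i|^{-(\alpha+1)}$ decay) does not help here, because the pointwise bounds for $C_{i,j}$ and $D_{i,j}$ quoted before the statement are already uniform in $(i,j)$.
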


Using Propositions \ref{AB--est.2} and \ref{D--est.2},
we get
\[
	| X-Y |
	\leq
	C_{\text g}
	L^{2- \alpha q} 
	\left\{
	\frac{1}{n^{1-( \alpha q+1 )/(6q)}}
	+
	\varepsilon_n^{(5 \alpha q -8)/(4q)}
	\right\}
	\to 0
\]
as $ n \to \infty $,
and this proves Theorem \ref{inscribed}.
\qquad \qquad \qquad \qquad \qquad \qquad \qquad \quad \ \ $\square$

%
%

\section{$\Gamma$-convergence}

In this section, we prove that 
$ \mathcal{E}_n^{\alpha ,q} $
converges to
$ \mathcal{E}^{\alpha ,q} $
in the sense of $ \Gamma $-convergence.
When we consider $\Gamma$-convergence,
it is necessary that
we consider the functionals
$ \mathcal{E}_n^{\alpha ,q} $
and
$ \mathcal{E}^{\alpha ,q} $
on a common set of simply closed curves.
Hence, we need to extend their domains.


\subsection{Preparation}\label{sec.gamma}

In this subsection, we give the definition of $\Gamma$-convergence and introduce its fundamental property, and we extend the domains of
$ \mathcal{E}^{\alpha ,q} $
and
$ \mathcal{E}_n^{\alpha ,q} $.

\begin{df}[$\Gamma$-convergence]
Let
$ X $
be a metric space.
If
$ \mathscr{F}_n : X \to \overline{\mathbb{R}} $
and
$ \mathscr{F} : X \to \overline{\mathbb{R}} $
satisfy the following two properties for all $ x \in X $,
we say that
$ \mathscr{F}_n $
\textit{$\Gamma$-converges} to
$ \mathscr{F} $
on $X$
and denote this by
$
	\mathscr{F}_n \overset{\Gamma}{\longrightarrow}
	\mathscr{F}\ \mbox{on}\ X 
$.
\begin{enumerate}
	\item $(\,\liminf$ inequality$)$
	For all
	$ \{ x_n \} \subset X $
	converging to $x$
	in $X$,
	we have
	\[
		\mathscr{F} (x) \leq \liminf_{n \to \infty} \mathscr{F}_n (x_n).
	\]
	\item $(\,\limsup$ inequality$)$
	There exists
	$ \{ x_n \} \subset X $
	converging to $x$ in $X$
	and we have
	\[
		\mathscr{F} (x) \geq \limsup_{n \to \infty} \mathscr{F}_n (x_n).
	\]
\end{enumerate}
\end{df}

The following lemma states a sufficient condition under which the minimum of
$ \mathscr{F} $
is less than that of
$ \mathscr{F}_n $.
This lemma is useful for the investigation of minimality of functionals.

\begin{lem}\label{gamma--min}
Let
$ ( X , d_X )$
be a metric space,
and let
$Y$
be a subspace of
$X$.
Assume that
$ \mathscr{F}_n $,
$ \mathscr{F} : X \to \overline{\mathbb{R}} $
satisfy the following.
\begin{enumerate}
	\item
	We have
	\[
		\mathscr{F} (x) \leq \liminf_{n \to \infty} \mathscr{F}_n (x_n)
	\]
	for all
	$ \{ x_n \} \subset X $
	such that
	$ d_X (x_n , x) \to 0\ ( x \in X )$
	as $ n \to \infty $.
	\item 
	For all
	$ y \in Y $,
	there exists
	$ \{ y_n \} \subset X $
	such that
	$ d_X (y_n,y) \to 0 $
	as $ n \to \infty $
	and
	\[
		\mathscr{F} (y) \geq \limsup_{n \to \infty} \mathscr{F}_n (y_n).
	\]
\end{enumerate}
Then,
for
$ z_n $,
$ z \in X $
satisfying
\[
	d_X ( z_n , z ) \to 0,
	\ \ 
	\left|
	\mathscr{F}_n (z_n) - \inf_X \mathscr{F}_n
	\right|
	\to 0
\]
as $ n \to \infty $,
we have
\[
	\mathscr{F} (z) \leq \liminf_{n \to \infty} \inf_X \mathscr{F}_n
	\leq \inf_Y \mathscr{F}.
\]
\end{lem}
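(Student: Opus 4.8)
The plan is to split the asserted double inequality $\mathscr{F}(z)\leq\liminf_{n\to\infty}\inf_X\mathscr{F}_n\leq\inf_Y\mathscr{F}$ into its two halves and prove each by combining the two hypotheses with elementary properties of $\liminf$ and $\limsup$ on $\overline{\mathbb{R}}$. The whole argument is formal: no part of the domain-specific analysis of the energies enters, so the work lies entirely in bookkeeping with extended-real limits.

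First I would prove the left-hand inequality $\mathscr{F}(z)\leq\liminf_{n\to\infty}\inf_X\mathscr{F}_n$. Since $d_X(z_n,z)\to0$, the first assumption (the $\liminf$ inequality) applied to the sequence $\{z_n\}$ gives $\mathscr{F}(z)\leq\liminf_{n\to\infty}\mathscr{F}_n(z_n)$. It then remains to identify $\liminf_{n\to\infty}\mathscr{F}_n(z_n)$ with $\liminf_{n\to\infty}\inf_X\mathscr{F}_n$. Writing $\varepsilon_n:=\mathscr{F}_n(z_n)-\inf_X\mathscr{F}_n$, the hypothesis $|\varepsilon_n|\to0$ together with the fact that perturbing a sequence by a null sequence leaves its $\liminf$ unchanged yields $\liminf_{n\to\infty}\mathscr{F}_n(z_n)=\liminf_{n\to\infty}\inf_X\mathscr{F}_n$, and hence the first half.

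Next I would prove the right-hand inequality $\liminf_{n\to\infty}\inf_X\mathscr{F}_n\leq\inf_Y\mathscr{F}$. Fix an arbitrary $y\in Y$. By the second assumption (the restricted $\limsup$ inequality) there is a recovery sequence $\{y_n\}\subset X$ with $d_X(y_n,y)\to0$ and $\mathscr{F}(y)\geq\limsup_{n\to\infty}\mathscr{F}_n(y_n)$. Because $y_n\in X$ for every $n$, we have $\inf_X\mathscr{F}_n\leq\mathscr{F}_n(y_n)$; passing to $\liminf$ and using $\liminf\leq\limsup$ gives $\liminf_{n\to\infty}\inf_X\mathscr{F}_n\leq\limsup_{n\to\infty}\mathscr{F}_n(y_n)\leq\mathscr{F}(y)$. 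Since $y\in Y$ was arbitrary, taking the infimum over $Y$ on the right gives $\liminf_{n\to\infty}\inf_X\mathscr{F}_n\leq\inf_Y\mathscr{F}$. Chaining the two halves completes the proof.

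There is no genuinely hard analytic step here; the only delicate points are the manipulations in $\overline{\mathbb{R}}$. In particular, justifying that the null perturbation $\varepsilon_n\to0$ does not alter the $\liminf$ even when $\inf_X\mathscr{F}_n$ may tend to $\pm\infty$ requires the standard squeeze $\liminf a_n-\delta\leq\liminf(a_n+\varepsilon_n)\leq\liminf a_n+\delta$ for eventually $|\varepsilon_n|<\delta$, followed by $\delta\to0$; note also that the very writability of $\mathscr{F}_n(z_n)-\inf_X\mathscr{F}_n$ forces $\inf_X\mathscr{F}_n$ to be finite for large $n$, so no $\infty-\infty$ ambiguity arises. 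The other point requiring attention is simply to keep the inequality $\liminf_{n\to\infty}\inf_X\mathscr{F}_n\leq\mathscr{F}(y)$ valid for each \emph{individual} $y\in Y$ before passing to $\inf_Y$, so that the infimum is taken on the correct side.
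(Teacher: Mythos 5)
Your proof is correct. The paper itself gives no proof of this lemma (it is the standard ``fundamental theorem of $\Gamma$-convergence'' relating almost-minimizers of $\mathscr{F}_n$ to minimizers of $\mathscr{F}$, as found e.g.\ in Dal Maso's monograph cited as \cite{DM}), and your two-step argument --- applying hypothesis~1 to the almost-minimizing sequence $\{z_n\}$ and then comparing $\inf_X\mathscr{F}_n\leq\mathscr{F}_n(y_n)$ along a recovery sequence for each $y\in Y$ --- is exactly the standard one, with the extended-real edge cases handled adequately.
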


Next,
we extend the domains of
$ \mathcal{E}_n^{\alpha,q}$
and
$ \mathcal{E}^{\alpha,q} $.
For a given
tame knot class
$ \mathcal{K} $,
let
$ \mathcal{C} ( \mathcal{K} ) $
be the set of simply closed curves of length $1$ belonging to
$ \mathcal{K} $,
and let
$ \mathcal{P}_n ( \mathcal{K} ) $
be the set of equilateral polygons with $n$ edges with total length $1$ belonging to
$ \mathcal{K} $.
Also, we set
\[
	\mathcal{X}(\mathcal{K})
	:=
	\left(
	\mathcal{C} ( \mathcal{K} ) \cap
	C^1 ( \mathbb{S}_1 , \mathbb{R}^d )
	\right)
	\cup
	\bigcup_{n \in \mathbb{N}} \mathcal{P}_n ( \mathcal{K} ).
\]
Furthermore,
let $d_{L^1}$, $d_{W^{1,\infty}} : \mathcal{X}(\mathcal{K}) \times \mathcal{X}(\mathcal{K}) \to \R$
be two metric functions
induced from the $L^1$-norm or $W^{1,\infty}$-norm,
respectively.
Then, we consider a metric function
$d_X : \mathcal{X}(\mathcal{K}) \times \mathcal{X}(\mathcal{K}) \to \R$
for which there exist two constants $C_1$, $C_2>0$ such that
\begin{equation}
	C_1 d_{L^1} (f,g)
	\leq
	d_X (f,g)
	\leq
	C_2 d_{W^{1,\infty}} (f,g)
	\label{emb}
\end{equation}
for
$ f $,
$ g \in \mathcal{X}(\mathcal{K}) $.
For example, $d_X (f,g) := \| f-g \|_{L^r (\mathbb{S}_1 , \mathbb{R}^d)}$
 or
$\| f-g \|_{W^{1,r} (\mathbb{S}_1 , \mathbb{R}^d)}$ $(r \in [1,\infty])$
satisfies (\ref{emb}) because $\mathbb{S}_1$ is a bounded set.
In what follows,
we put
\[
	X := ( \mathcal{X}(\mathcal{K}) , d_X ).
\]
Moreover, let
\[
	Y := 
	\left(
	\mathcal{C} ( \mathcal{K} ) \cap
	C^1 ( \mathbb{S}_1 , \mathbb{R}^d ) \cap
	W^{1+ \sigma ,2q}  ( \mathbb{S}_1 , \mathbb{R}^d )
	,d_X
	\right).
\]
We extend the domain of
$ \mathcal{E}_n^{\alpha,q} $ to $X$
as follows.
For
$ m \ne n $,
$ p_n \in \mathcal{P}_n ( \mathcal{K} ) $,
and
a simply closed curve
$ \gamma $,
we \textit{define}
\[
	\mathcal{E}^{\alpha ,q}_m (p_n) := \infty,
	\ \mathcal{E}^{\alpha ,q}_m (\gamma) := \infty.
\]
Concerning the extension of the domain of
$ \mathcal{E}^{\alpha,q} $,
we obtain the following proposition.

\begin{prop}\label{polygon--expand}

Let
$ p_n $
be a polygon of length $1$ with $n$ edges and vertices
$ p_n (a_i) \in \mathbb{R}^d \ (i=1, \ldots ,n)$.
Suppose
$ \alpha \in ( 0, \infty ) $,
$ q \in [1, \infty ) $
with
$ 2 \leq \alpha q < 2+1/q $.
Then, we have
$ p_n \notin W^{1+\sigma ,2q} ( \mathbb{S}_1 , \mathbb{R}^d ) $,
that is,
$ \mathcal{E}^{\alpha,q} (p_n) = \infty $.

\end{prop}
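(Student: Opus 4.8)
The plan is to show that the Gagliardo-type seminorm governing membership in $W^{1+\sigma,2q}$ already diverges because of a single corner of $p_n$, and then to read off $\mathcal{E}^{\alpha,q}(p_n)=\infty$ from Proposition \ref{propbdd}. First I would recall that, by definition, $p_n\in W^{1+\sigma,2q}(\mathbb{S}_1,\mathbb{R}^d)$ forces $p_n'\in W^{\sigma,2q}(\mathbb{S}_1,\mathbb{R}^d)$, that is,
\[
	\int_{\mathbb{S}_1}\int_{\mathbb{S}_1}
	\frac{|p_n'(t)-p_n'(s)|^{2q}}{|t-s|^{1+2q\sigma}}\,ds\,dt<\infty,
\]
and I would note that the exponent simplifies to $1+2q\sigma=\alpha q$, since $\sigma=(\alpha q-1)/(2q)$. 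Because $p_n$ is parametrized by arc length and piecewise linear, $p_n'$ is piecewise constant, equal to the unit direction $e_k$ of the $k$-th edge on $(a_k,a_{k+1})$, and it jumps at each vertex.

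Next I would isolate one genuine corner. Since $p_n$ is a closed polygon, the edge directions cannot all be equal (otherwise the weighted sum of edge vectors could not vanish), so there is a vertex, say at the parameter $a_*$, with $\delta:=|e_{k+1}-e_k|>0$. I would then choose $\varepsilon>0$ small enough that the intervals $(a_*-\varepsilon,a_*)$ and $(a_*,a_*+\varepsilon)$ contain no other vertex; on this product set $p_n'(s)=e_k$ and $p_n'(t)=e_{k+1}$, so the numerator is the positive constant $\delta^{2q}$. Restricting the double integral to this set and substituting $u=a_*-s$, $v=t-a_*$ (so that $|t-s|=u+v$), I would bound the seminorm from below by
\[
	\delta^{2q}\int_0^\varepsilon\int_0^\varepsilon\frac{du\,dv}{(u+v)^{\alpha q}}.
\]

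Finally I would show this lower bound is infinite. Performing the inner integration in $u$ and using $\alpha q\ge 2>1$, the leading behaviour as $v\to 0$ is of order $v^{-(\alpha q-1)}/(\alpha q-1)$, and since $\alpha q-1\ge 1$ the remaining integral $\int_0^\varepsilon v^{-(\alpha q-1)}\,dv$ diverges. Hence the seminorm of $p_n'$ is infinite, so $p_n\notin W^{1+\sigma,2q}(\mathbb{S}_1,\mathbb{R}^d)$. As $p_n$ is a regular Lipschitz curve and $2\le\alpha q<2q+1$ (which follows from $2\le\alpha q<2+1/q$, using $(2q+1)(q-1)\ge 0$), Proposition \ref{propbdd} then yields $\mathcal{E}^{\alpha,q}(p_n)=\infty$. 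I expect the only genuinely delicate point to be the elementary but essential observation that a single tangential jump already forces divergence exactly when $\alpha q\ge 2$; the rest is bookkeeping, namely confirming that a real corner exists, keeping the other vertices out of the test region, and carrying out the scalar integral estimate.
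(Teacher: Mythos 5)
Your proof is correct, but it takes a genuinely different route from the paper's. The paper does not touch the Gagliardo seminorm directly: it reduces, via the embedding $W^{\sigma-1,2q}(\mathbb{S}_1,\mathbb{R}^d)\subset W^{(\alpha-3)/2,2}(\mathbb{S}_1,\mathbb{R}^d)$, to showing that the distributional second derivative $p_n''$ --- a finite linear combination of Dirac masses at the vertices --- fails to lie in the Hilbert--Sobolev space $W^{(\alpha-3)/2,2}$, and it verifies this by computing the weighted Fourier series $\sum_{k}|k|^{\alpha-3}|(p_n'')^\wedge(k)|^2$, whose diagonal part $\sum_k |k|^{\alpha-3}\sum_j|c_{j\ell}|^2$ diverges while the off-diagonal oscillatory part converges. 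You instead localize the seminorm $\int\!\!\int |p_n'(t)-p_n'(s)|^{2q}|t-s|^{-\alpha q}\,ds\,dt$ at a single genuine corner (which exists because a closed polygon cannot have all edge directions equal) and reduce to the elementary divergence of $\int_0^\varepsilon\!\int_0^\varepsilon (u+v)^{-\alpha q}\,du\,dv$ for $\alpha q\ge 2$; the passage to $\mathcal{E}^{\alpha,q}(p_n)=\infty$ via Proposition \ref{propbdd} and the observation $2+1/q\le 2q+1$ are the same in spirit as what the paper intends. Your argument is more elementary (no distributional Fourier analysis, no negative-order Sobolev embedding) and in fact slightly more general, since it uses the hypothesis $\alpha q<2+1/q$ only through the weaker condition $\alpha q<2q+1$ required by Blatt's characterization, whereas the paper's reduction to $W^{(\alpha-3)/2,2}$ is what forces the stronger restriction in the statement. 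What the paper's computation buys in exchange is explicit information about the Fourier-side regularity of $p_n''$, which your local estimate does not provide.
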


\begin{proof}

It is sufficient to prove
\begin{equation}
	p_n^{\prime \prime}
	\notin W^{(\alpha-3)/2,2}(\mathbb{S}_1 , \mathbb{R}^d)
	\label{pp}
\end{equation}
for $ 2 \leq \alpha < 3 $
because we have
$
	W^{\sigma-1,2q} ( \mathbb{S}_1 , \mathbb{R}^d )
	\subset
	W^{(\alpha-3)/2 ,2} ( \mathbb{S}_1 , \mathbb{R}^d )
$.
Note that there exist constants
$ c_{j\ell} \ \ ( 1 \leq j \leq n,\ 1 \leq \ell \leq d ) $
such that
\[
	p_n^{\prime \prime}
	=
	\sum_{j=1}^n 
	\left(
	\begin{array}{c}
		c_{j1}
	\\
		\vdots
	\\
		c_{jd}
	\end{array}
	\right)
	\delta_{a_j},
\]
where $ \delta_{a_j} $ is the Dirac measure supported at $a_j$.

In order to prove (\ref{pp}),
we show
\begin{equation}
	\sum_{k \in \mathbb{Z}} |k|^{\alpha-3}
	| (p_n^{\prime \prime} )^\wedge (k) |^2
	=
	\sum_{\ell=1}^d
	\sum_{k \in \mathbb{Z}} |k|^{\alpha -3}
	\left|
	\sum_{j=1}^n
	c_{j \ell} e^{-2\pi i ka_j} 
	\right|^2
	= \infty,
	\label{series}
\end{equation}
where
$
	(p_n^{\prime \prime} )^\wedge (k) =
	{\vphantom{
	\left\langle
	p_n^{\prime \prime}, e^{-2\pi ik\cdot}
	\right\rangle
	}}_{\mathscr{D}^\prime}\! \left\langle
	p_n^{\prime \prime}, e^{-2\pi ik \cdot}
	\right\rangle_{\mathscr{D}}
$,
and
$ i = \sqrt{-1} $.
Fix $\ell = 1, \ldots , d$. Then, we have
\begin{eqnarray*}
	\lefteqn{
	\sum_{ k \in \mathbb{Z} } |k|^{\alpha -3}
	\left|
	\sum_{j=1}^n
	c_{j \ell} e^{-2\pi i ka_j} 
	\right|^2
	}
\\
	& = &
	\sum_{ k \in \mathbb{Z} } |k|^{\alpha -3} \sum_{j=1}^n |c_{j \ell}|^2
	+
	2 \sum_{ k \in \mathbb{Z} } |k|^{\alpha -3} 
	\sum_{1 \leq j_1 < j_2 \leq n}
	c_{j_1 \ell} c_{j_2 \ell} 
	\cos 2\pi k ( a_{j_2} - a_{j_1} ).
\end{eqnarray*}
It is obvious that the first term diverges to infinity,
and the second term is bounded because the infinite series
$ \sum_{k=1}^\infty k^s \cos (ka) $
converges for
$ a \in \mathbb{R} \setminus 2\pi \mathbb{Z} $
and
$ s < 0 $.
Therefore, we get (\ref{series}).
\end{proof}


\subsection{The $\Gamma$-convergence of $ \mathcal{E}_n^{\alpha ,q} $}

Note that we prove the $\liminf$ inequality with respect to $L^1$-topology
and the $\limsup$ inequality with respect to $W^{1,\infty}$-topology
because we have to consider the $\liminf$ inequality for \textit{all} polygonal sequences $\{ p_n \}$ and the $\limsup$ inequality for
\textit{a} polygonal sequence $\{ p_n \}$.

First, we prove the $ \liminf $ inequality needed for proof of the
$ \Gamma $-convergence of $\mathcal{E}_n^{\alpha ,q}$.

\begin{thm}[The $\liminf$ inequality]\label{liminf}
Let
$ \alpha \in ( 0, \infty ) $,
$ q \in [1, \infty ) $.
Assume that
$ p_n $,
$ \gamma \in \mathcal{C}(\mathcal{K}) $
satisfy
\[
	\| p_n - \gamma \|_{L^1 (\mathbb{S}_1,\mathbb{R}^d)} \to 0
\]
as $ n \to \infty $.
Then, we have
\[
	\mathcal{E}^{\alpha ,q} (\gamma)
	\leq \liminf_{n \to \infty} \mathcal{E}_n^{\alpha ,q} (p_n).
\]
\end{thm}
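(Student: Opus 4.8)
The plan is to write both energies as double integrals over $\mathbb{S}_1\times\mathbb{S}_1$ of nonnegative integrands and then pass to the limit with Fatou's lemma. First I would dispose of the trivial case: if $\liminf_{n\to\infty}\mathcal{E}_n^{\alpha,q}(p_n)=\infty$ there is nothing to prove, so I may assume the liminf is finite and, after passing to a subsequence which I do not relabel, that $\mathcal{E}_n^{\alpha,q}(p_n)$ converges to it. Since $p_n$ is an equilateral polygon of length $1$ parametrized by arc-length, each of its $n$ edges has length $1/n$, so with $a_i$ the arc-length parameter of the $i$-th vertex one has $a_{i+1}-a_i=|p_n(a_{i+1})-p_n(a_i)|=1/n$. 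I would then define $g_n\colon\mathbb{S}_1\times\mathbb{S}_1\to[0,\infty)$ by $g_n(s,t):=(\Manp(a_i,a_j))^q$ for $(s,t)\in[a_i,a_{i+1})\times[a_j,a_{j+1})$ with $i\ne j$ and $g_n:=0$ on the diagonal cells $i=j$, and set $g(s,t):=(\Mag(s,t))^q$. Because a chord never exceeds the corresponding intrinsic distance, both $\Manp\ge 0$ and $\Mag\ge 0$, so $g_n,g\ge 0$; and since each cell has area $1/n^2$, a direct computation (using $L=1$) gives $\mathcal{E}_n^{\alpha,q}(p_n)=\frac1\alpha\int_{\mathbb{S}_1}\int_{\mathbb{S}_1}g_n\,ds\,dt$ and $\mathcal{E}^{\alpha,q}(\gamma)=\frac1\alpha\int_{\mathbb{S}_1}\int_{\mathbb{S}_1}g\,ds\,dt$. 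Thus it suffices to show $\int_{\mathbb{S}_1}\int_{\mathbb{S}_1}g\,ds\,dt\le\liminf_{n}\int_{\mathbb{S}_1}\int_{\mathbb{S}_1}g_n\,ds\,dt$.

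Next I would upgrade the $L^1$-convergence to uniform convergence. Both $\gamma$ and every $p_n$ are parametrized by arc-length and hence $1$-Lipschitz, so $\{p_n\}$ is equicontinuous, and together with the $L^1$-bound this yields a uniform $L^\infty$ bound; by the Arzel\`a--Ascoli theorem every subsequence then has a uniformly convergent sub-subsequence whose limit must agree with the $L^1$-limit $\gamma$. Consequently $p_n\to\gamma$ uniformly on $\mathbb{S}_1$, $\gamma$ is itself $1$-Lipschitz of length $1$ (so unit speed), and its intrinsic distance coincides with the cyclic distance written $|t-s|$.

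The core step is the a.e.\ pointwise convergence $g_n\to g$. Fix $(s,t)$ with $s\ne t$; since $\gamma\in\mathcal{C}(\mathcal{K})$ is a simple closed curve we automatically have $\gamma(s)\ne\gamma(t)$, so only the diagonal (a null set) is excluded. For large $n$ let $(i,j)$ be the indices with $(s,t)\in[a_i,a_{i+1})\times[a_j,a_{j+1})$; as $s\ne t$ these satisfy $i\ne j$. The cell widths tend to $0$, so $a_i\to s$, $a_j\to t$, and hence $|a_j-a_i|\to|t-s|$. Using the uniform convergence and the Lipschitz bound, $|p_n(a_i)-\gamma(s)|\le|a_i-s|+\|p_n-\gamma\|_{L^\infty(\mathbb{S}_1,\mathbb{R}^d)}\to 0$ and likewise $p_n(a_j)\to\gamma(t)$. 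Because $\gamma(s)\ne\gamma(t)$, every denominator occurring in $\Manp(a_i,a_j)$ stays bounded away from $0$, so $\Manp(a_i,a_j)\to\Mag(s,t)$ and therefore $g_n(s,t)\to g(s,t)$. With $g_n\ge 0$ and $g_n\to g$ a.e., Fatou's lemma finally gives $\int_{\mathbb{S}_1}\int_{\mathbb{S}_1}g\,ds\,dt\le\liminf_n\int_{\mathbb{S}_1}\int_{\mathbb{S}_1}g_n\,ds\,dt$, i.e.\ $\mathcal{E}^{\alpha,q}(\gamma)\le\liminf_n\mathcal{E}_n^{\alpha,q}(p_n)$.

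The hard part will be this last a.e.\ convergence: one must track how the piecewise-constant discrete integrand behaves near the diagonal, verify that the index pair $(i,j)$ really localizes at $(s,t)$, and confirm that the potentially degenerate denominators cause no trouble off the null diagonal. The uniform-convergence upgrade is what makes the bookkeeping clean; with only $L^1$ (equivalently, a.e.\ along a subsequence) convergence one would have to interleave two subsequence extractions and argue more carefully. Note that no constraint on $\alpha q$ is needed here, consistent with the generality of the statement.
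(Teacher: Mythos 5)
Your argument is correct and follows essentially the same route as the paper: both reduce the claim to Fatou's lemma applied to the nonnegative piecewise-constant integrand of the discrete energy, after verifying that this integrand converges pointwise to $(\mathcal{M}^\alpha(\gamma)(s,t))^q$ off the diagonal (where the simplicity of $\gamma$ keeps the denominators away from zero). The only cosmetic difference is that you upgrade the $L^1$-convergence to uniform convergence via Arzel\`a--Ascoli, whereas the paper simply extracts a further subsequence converging a.e.\ on $\mathbb{S}_1$.
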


\begin{proof}
We may assume
$
	\liminf_{n \to \infty}
	\mathcal{E}_n^{\alpha ,q} ( p_n ) < \infty
$.
Note that
$ p_n \in \mathcal{P}_n ( \mathcal{K} ) $
by the way we extended the domain of
$ \mathcal{E}_n^{\alpha ,q} $
.
Now, there exists $\{ n_k \}_{k=1}^\infty$ such that
\[
	n_1 < n_2 < \cdots \to \infty,\ \ 
	\liminf_{n \to \infty} \mathcal{E}_n^{\alpha ,q} (p_n)
	=
	\lim_{k \to \infty} \mathcal{E}_{n_k}^{\alpha ,q} (p_n).
\]
Thus, there exists $\{ p_{n_{k(\nu)}} \}_{\nu =1}^\infty$
which is a subsequence of $\{ n_k \}_{k=1}^\infty$
such that
$ p_{n_{k(\nu)}} \to \gamma$
as $\nu \to \infty$ a.e.\,on $\mathbb{S}_1$.
It is sufficient to prove the claim for
$\{ p_{n_{k(\nu)}} \}_{\nu =1}^\infty$.

Now, we write $p_{n_{k(\nu)}}$ as $p_n$ for simplicity.
Let
$
	s,t \in 
	\left\{
	u \in \mathbb{S}_1 
	\,\left|\, 
	\lim_{n \to \infty} p_n (u) = \gamma (u)  
	\right.
	\right\} 
$,
$ s \ne t $.
For all
$ n \in \mathbb{N} $,
we can put consecutive points
$ a_1^{(n)} , \ldots , a_n^{(n)} \in \mathbb{S}_1 $
which satisfy
$ | a_{k+1}^{(n)} - a_k^{(n)} | = 1/n $
for
$ k=1, \ldots ,n $
and such that there exists
$ i_n , j_n \in \{ 1, \ldots ,n \} $
satisfying
\[
	( s,t ) \in
	[ a_{ i_n }^{(n)} , a_{ i_n+1 }^{(n)} )
	\times
	[ a_{ j_n }^{(n)} , a_{ j_n+1 }^{(n)} ).
\]
Then, we have
\[
	\sum_{\substack{i,j=1\\i \ne j}}^n
	\Manp ( a_i^{(n)} , a_j^{(n)} )^q
	\chi_{ [ a_{ i_n }^{(n)} , a_{ i_n +1 }^{(n)} )
	\times [ a_{ j_n }^{(n)} , a_{ j_n +1 }^{(n)} ) }
	( s,t ) 
	\to 
	\Mag (s,t)^q
\]
as $ n \to \infty $. 
Using Fatou's lemma,
we have
\begin{eqnarray*}
	\lefteqn{
	\mathcal{E}^{\alpha ,q} (\gamma)
	=
	\frac 1 \alpha
	\int_{\mathbb{S}_1} \int_{\mathbb{S}_1}
	\Mag (s,t)^q
	dsdt
	}
\\
	& = &
	\frac 1 \alpha
	\int_{\mathbb{S}_1} \int_{\mathbb{S}_1}
	\lim_{n \to \infty}
	\sum_{\substack{i,j=1\\i \ne j}}^n
	\Manp ( a_i^{(n)} , a_j^{(n)} )^q
	\chi_{ [ a_{i_n}^{(n)} , a_{i_n +1 }^{(n)} )
	\times [ a_{j_n}^{(n)} , a_{j_n +1}^{(n)} ) }
	( s,t )
	dsdt
\\
	& \leq &
	\liminf_{n \to \infty} \mathcal{E}_n^{\alpha ,q} (p_n)
\end{eqnarray*}
because of the definition of
$ \{ a_k^{(n)} \}_{k=1}^n $.
\end{proof}

Furthermore, by Ascoli-Arzel\`{a}'s theorem, we get the following corollary.
\begin{cor}\label{liminf--cor}
Assume that
$ p_n \in \mathcal{P}_n (\mathcal{K}) $
satisfy that
\[
	\sup_{n \in \mathbb{N} } 
	\| p_n \|_{ L^\infty (\mathbb{S}_1 , \mathbb{R}^d) } < \infty,
	\ \ 
	\sup_{n \in \mathbb{N} } \mathcal{E}^{\alpha,q}_n (p_n) < \infty.
\]
Then, there exists a subsequence
$ \{ p_{n_j} \} $
and
$ \gamma \in W^{1+\sigma ,2q} (\mathbb{S}_1 , \mathbb{R}^d) $
such that
$ \| p_{n_j} - \gamma \|_{ L^1 (\mathbb{S}_1 , \mathbb{R}^d) } \to 0 $
as $ j \to \infty $ for
$ \alpha \in ( 0, \infty ) $,
$ q \in [1, \infty ) $
with
$ 2 \leq \alpha q < 2q+1 $.
\end{cor}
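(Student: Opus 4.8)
The plan is to combine a compactness argument via Ascoli-Arzel\`{a} with the lower semicontinuity already encoded in the $\liminf$ inequality, and then to upgrade the regularity of the limit by invoking Blatt's characterization (Proposition \ref{propbdd}).

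First I would exploit the geometry of the admissible polygons. Each $p_n \in \mathcal{P}_n(\mathcal{K})$ is an equilateral polygon of total length $1$ parametrized by arc length, so $p_n$ is $1$-Lipschitz; together with the hypothesis $\sup_n \| p_n \|_{L^\infty(\mathbb{S}_1,\mathbb{R}^d)} < \infty$, the family $\{ p_n \}$ is uniformly bounded and equicontinuous. Since $\mathbb{S}_1$ is compact, Ascoli-Arzel\`{a} yields a subsequence $\{ p_{n_j} \}$ converging uniformly on $\mathbb{S}_1$ to some limit $\gamma$, which is therefore itself $1$-Lipschitz, i.e.\ $\gamma \in C^{0,1}(\mathbb{S}_1,\mathbb{R}^d)$; as $\mathbb{S}_1$ has finite measure, uniform convergence gives $\| p_{n_j} - \gamma \|_{L^1(\mathbb{S}_1,\mathbb{R}^d)} \to 0$.

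Next I would show $\mathcal{E}^{\alpha,q}(\gamma) < \infty$. Uniform convergence forces a.e.\ pointwise convergence $p_{n_j} \to \gamma$ on $\mathbb{S}_1$, so I can reproduce the Fatou's lemma computation from the proof of Theorem \ref{liminf}: choosing for each $n_j$ the associated partition and characteristic functions, the relevant summand converges pointwise to $\Mag(s,t)^q$, and Fatou's lemma gives
\[
	\mathcal{E}^{\alpha ,q} (\gamma)
	\leq
	\liminf_{j \to \infty} \mathcal{E}_{n_j}^{\alpha ,q} (p_{n_j})
	\leq
	\sup_{n \in \mathbb{N}} \mathcal{E}_n^{\alpha ,q}(p_n)
	< \infty .
\]
Since $\gamma \in C^{0,1}(\mathbb{S}_1,\mathbb{R}^d)$ now has finite energy, the bi-Lipschitz estimate (\ref{bi-Lip}) applies, i.e.\ $|t-s| \leq C_b |\gamma(t)-\gamma(s)|$; letting $t \to s$ at points of differentiability forces $|\gamma'| \geq C_b^{-1}$ a.e., so $\gamma$ is a regular curve. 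Proposition \ref{propbdd} then applies, and because $\mathcal{E}^{\alpha,q}(\gamma) < \infty$ it gives exactly $\gamma \in W^{1+\sigma,2q}(\mathbb{S}_1,\mathbb{R}^d)$, which is the desired conclusion.

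I expect the main obstacle to be the logical ordering rather than any single hard estimate: to apply Proposition \ref{propbdd} I must know the limit is a genuine regular curve, but regularity (via the bi-Lipschitz bound) itself requires finiteness of the energy. The Fatou/lower-semicontinuity step must therefore be carried out first, and only afterwards can the bi-Lipschitz lemma and Blatt's equivalence be chained together. A minor point to handle with care is that Theorem \ref{liminf} is stated for curves already known to lie in $\mathcal{C}(\mathcal{K})$, so I would reproduce its Fatou argument directly rather than cite the statement, since the membership $\gamma \in \mathcal{C}(\mathcal{K})$ is not available a priori.
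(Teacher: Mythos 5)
Your argument matches the paper's intended proof: the paper derives this corollary in a single line from Ascoli--Arzel\`{a} together with Theorem \ref{liminf} and Proposition \ref{propbdd}, which is exactly the chain you spell out (uniform Lipschitz bound $\Rightarrow$ compactness and $L^1$ convergence, Fatou/liminf $\Rightarrow$ finite energy of the limit, bi-Lipschitz estimate $\Rightarrow$ regularity, Blatt's characterization $\Rightarrow$ membership in $W^{1+\sigma,2q}$). Your write-up is in fact more detailed than the paper's, which gives no proof beyond the phrase ``by Ascoli--Arzel\`{a}'s theorem,'' and your care about not citing Theorem \ref{liminf} verbatim (since $\gamma\in\mathcal{C}(\mathcal{K})$ is not known a priori) is a reasonable refinement rather than a departure.
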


The following claim is a strong version of the $\limsup$ inequality for 
$ \gamma \in W^{1+ \sigma , 2q } ( \mathbb{S}_1 , \mathbb{R}^d ) $.
We can prove it using the method of proof of \cite[Proposition 4.1]{Sch}.

\begin{thm}[A strong version of the $\limsup$ equality]\label{limsup}
Let
$ \alpha \in (0, \infty ) $
and
$ q \in [1, \infty ) $
with
$ 2 \leq \alpha q < 2q+1 $,
and let
$
	\gamma \in \mathcal{C}( \mathcal{K} )
	\cap C^1 ( \mathbb{S}_1 ,\mathbb{R}^d )
	\cap W^{1+\sigma , 2q} ( \mathbb{S}_1 , \mathbb{R}^d )
$.
Then, there exists
$ p_n \in \mathcal{P}_n ( \mathcal{K} ) $
such that
\[
	\lim_{n \to \infty}
	\| p_n - \gamma \|_{ W^{1, \infty} ( \mathbb{S}_1 , \mathbb{R}^d ) }=0,
	\ \ 
	\lim_{n \to \infty} \mathcal{E}^{\alpha ,q}_n (p_n)
	= \mathcal{E}^{\alpha ,q}( \gamma ).
\]
\end{thm}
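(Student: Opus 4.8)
The plan is to take the recovery sequence to consist of equilateral polygons \emph{inscribed} in $\gamma$, which reduces the energy convergence to Theorem~\ref{inscribed} and leaves only the $W^{1,\infty}$-convergence to be verified directly. Since $\gamma$ is a regular closed $C^1$ curve, for each sufficiently large $n$ there is a division $b_1 < b_2 < \cdots < b_n \ (< b_1 + 1)$ of $\mathbb{S}_1$ whose consecutive chords $|\gamma(b_{k+1}) - \gamma(b_k)|$ share a common value $\ell_n$; existence of such an inscribed equilateral $n$-gon follows from a shooting/continuity argument (fix a chord length $\ell$, step around $\gamma$ placing points at successive chord-distance $\ell$, and use the intermediate value theorem in $\ell$ to close the polygon up after exactly $n$ steps), exactly as in the proof of \cite[Proposition 4.1]{Sch}. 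Writing $\tilde{L}_n = n\ell_n$ for the polygonal length and rescaling by $1/\tilde{L}_n$, I let $p_n$ be the resulting equilateral polygon of total length $1$, parametrized by arc length. Because the mesh is fine for large $n$, the inscribed polygon is ambient isotopic to $\gamma$, so $p_n \in \mathcal{P}_n(\mathcal{K})$.

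For the energy, I would note that all chords being equal gives $\min_k |\gamma(b_{k+1})-\gamma(b_k)| = \max_k |\gamma(b_{k+1})-\gamma(b_k)| = \ell_n = \tilde{L}_n/n$, and since $\tilde{L}_n \to 1$ as $n \to \infty$, for any fixed $0 < c < 1 < \bar{c}$ the hypothesis (\ref{div.bk}) holds once $n$ is large. Hence the unrescaled equilateral inscribed polygons satisfy the assumptions of Theorem~\ref{inscribed}, which gives convergence of their discrete energy to $\mathcal{E}^{\alpha,q}(\gamma)$. Since $\mathcal{E}_n^{\alpha,q}$ is scale invariant (the factor $L^{\alpha q - 2}$ cancels the scaling of the remaining sum), rescaling to unit length does not change the value, so $\mathcal{E}_n^{\alpha,q}(p_n) \to \mathcal{E}^{\alpha,q}(\gamma)$.

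It then remains to prove $\|p_n - \gamma\|_{W^{1,\infty}(\mathbb{S}_1,\mathbb{R}^d)} \to 0$. The $L^\infty$-part is immediate: the vertices of $p_n$ are the points $\gamma(b_k)/\tilde{L}_n$ with $\tilde{L}_n \to 1$, so uniform continuity of $\gamma$ together with fineness of the mesh gives $\|p_n - \gamma\|_{L^\infty} \to 0$ after fixing the base point. For the derivative, on the $k$-th edge $p_n'$ is the constant unit vector $\ell_n^{-1}(\gamma(b_{k+1}) - \gamma(b_k)) = \ell_n^{-1}\int_{b_k}^{b_{k+1}}\gamma'(u)\,du$, a normalized average of $\gamma'$ over an arc of length $O(1/n)$. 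Using the modulus of continuity of $\gamma'$ (here $\gamma \in C^1$ is essential), this average, hence $p_n'$, is uniformly close to $\gamma'(u)$ for every $u$ in that arc; comparing with $\gamma'$ at the matching parameter on $\mathbb{S}_1$ then yields $\|p_n' - \gamma'\|_{L^\infty} \to 0$.

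I expect the delicate point to be this last comparison at matching parameters. Since $p_n$ is parametrized by its own arc length, its $k$-th vertex sits at parameter $(k-1)/n$, whereas $\gamma(b_k)$ sits at parameter $b_k$; to invoke uniform continuity of $\gamma'$ one must control the reparametrization $\phi_n$ determined by $\phi_n((k-1)/n) = b_k$ and show $\sup_s |\phi_n(s) - s| \to 0$. This holds because equal chords force the arc lengths $b_{k+1} - b_k = \ell_n(1 + o(1))$ to be asymptotically uniform, so that the accumulated discrepancy $\sum_m (b_{m+1} - b_m - 1/n)$ is $o(1)$. Establishing this uniform alignment, together with the existence and mesh-regularity of the inscribed equilateral polygons, is where the real work lies; each step proceeds as in the proof of \cite[Proposition 4.1]{Sch}.
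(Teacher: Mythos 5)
Your proposal is correct and follows essentially the same route as the paper, which itself offers no details beyond invoking the method of \cite[Proposition 4.1]{Sch}: inscribe equilateral polygons (existence via the shooting argument, cf.\ \cite{Wu}), check the mesh condition \pref{div.bk} so that Theorem \ref{inscribed} plus scale invariance gives the energy convergence, and control the arc-length reparametrization discrepancy via $\sum_m(|b_{m+1}-b_m|-\ell_n)=1-\tilde L_n\to 0$ to get $W^{1,\infty}$-convergence. You correctly identify the parameter-alignment step as the only delicate point, and your sketch of it is sound.
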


Next,
we show that
$ \mathcal{E}_n^{\alpha ,q} $
$ \Gamma $-converges to
$ \mathcal{E}^{\alpha ,q} $
using previous results.

\begin{thm}[$ \Gamma $-convergence of $ \mathcal{E}_n^{\alpha ,q} $]\label{gamma}
Let
$ \alpha \in (0, \infty )$
and
$ q \in [1, \infty ) $
with
$ 2 \leq \alpha q < 2q+1 $.
Then, we have
\begin{equation} 
	\mathcal{E}^{\alpha,q}_n
	\overset{\Gamma}{\longrightarrow}
	\mathcal{E}^{\alpha,q}
	\ \mbox{on}\ X.
\end{equation}
\end{thm}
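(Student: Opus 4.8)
The plan is to verify the two defining conditions of $\Gamma$-convergence for every $x \in X$: the $\liminf$ inequality for all sequences converging to $x$, and the existence of a recovery sequence attaining the $\limsup$ inequality. The key structural fact I would exploit is that (\ref{emb}) makes $d_X$ simultaneously stronger than the $L^1$-distance and weaker than the $W^{1,\infty}$-distance, so that $d_X$-convergence implies $L^1$-convergence (the topology in which Theorem \ref{liminf} is phrased), while $W^{1,\infty}$-convergence implies $d_X$-convergence (so the recovery sequence produced by Theorem \ref{limsup} is admissible for $d_X$). Since every element of $X$ is either a $C^1$ curve in $\mathcal{C}(\mathcal{K})$ or an equilateral polygon in some $\mathcal{P}_m(\mathcal{K})$, and since $\mathcal{E}_n^{\alpha,q}$ has been extended to equal $\infty$ off $\mathcal{P}_n(\mathcal{K})$, the argument reduces to assembling the earlier results case by case.

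For the $\liminf$ inequality, fix $x \in X$ and an arbitrary $\{x_n\} \subset X$ with $d_X(x_n,x)\to0$. If $\liminf_n \mathcal{E}_n^{\alpha,q}(x_n)=\infty$ there is nothing to prove, so I may pass to a subsequence $\{x_{n_k}\}$ along which the energies converge to a finite limit. By the domain extension each such $x_{n_k}$ must lie in $\mathcal{P}_{n_k}(\mathcal{K}) \subset \mathcal{C}(\mathcal{K})$, and $d_X(x_{n_k},x)\to0$ forces $\|x_{n_k}-x\|_{L^1}\to0$ by (\ref{emb}). The Fatou argument in the proof of Theorem \ref{liminf}, which only uses a.e.\ convergence along a further subsequence, then applies along $\{n_k\}$ and yields $\mathcal{E}^{\alpha,q}(x)\le\liminf_k\mathcal{E}_{n_k}^{\alpha,q}(x_{n_k})=\liminf_n\mathcal{E}_n^{\alpha,q}(x_n)$. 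This also settles the case where $x$ is a polygon: there Proposition \ref{polygon--expand} gives $\mathcal{E}^{\alpha,q}(x)=\infty$, so the inequality simply forces $\liminf_n\mathcal{E}_n^{\alpha,q}(x_n)=\infty$, as required.

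For the recovery sequence I distinguish according to whether $\mathcal{E}^{\alpha,q}(x)$ is finite. If $x=\gamma\in\mathcal{C}(\mathcal{K})\cap C^1\cap W^{1+\sigma,2q}$, then $\mathcal{E}^{\alpha,q}(\gamma)<\infty$ by Proposition \ref{propbdd}, and Theorem \ref{limsup} supplies polygons $p_n\in\mathcal{P}_n(\mathcal{K})$ with $\|p_n-\gamma\|_{W^{1,\infty}}\to0$, hence $d_X(p_n,\gamma)\to0$ by (\ref{emb}), and with $\lim_n\mathcal{E}_n^{\alpha,q}(p_n)=\mathcal{E}^{\alpha,q}(\gamma)$; thus the $\limsup$ inequality holds with equality. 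In every remaining case, namely $x$ an equilateral polygon or a $C^1$ curve not lying in $W^{1+\sigma,2q}$, Proposition \ref{polygon--expand} (for a polygon) or Proposition \ref{propbdd} (for a non-Sobolev $C^1$ curve) gives $\mathcal{E}^{\alpha,q}(x)=\infty$; then the constant sequence $x_n\equiv x$ converges to $x$ in $X$ and trivially satisfies $\mathcal{E}^{\alpha,q}(x)=\infty\ge\limsup_n\mathcal{E}_n^{\alpha,q}(x_n)$. Collecting the two inequalities over all $x\in X$ gives $\mathcal{E}_n^{\alpha,q}\overset{\Gamma}{\longrightarrow}\mathcal{E}^{\alpha,q}$ on $X$.

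The genuine mathematical content lies entirely in the two auxiliary theorems, Theorem \ref{liminf} (lower semicontinuity through Fatou's lemma in the $L^1$-topology) and Theorem \ref{limsup} (the explicit $W^{1,\infty}$ recovery sequence), so the present proof is essentially bookkeeping. The step demanding the most care is the interplay with the domain extension: one must check that finiteness of the discrete energy pins the approximants down to the correct space $\mathcal{P}_n(\mathcal{K})$, and that the divergence $\mathcal{E}^{\alpha,q}=\infty$ on polygons is \emph{compatible with} rather than contradicted by the $\liminf$ inequality. This compatibility is precisely what forces sequences of $n$-gons approximating a fixed polygon, or a $C^1$ curve outside $W^{1+\sigma,2q}$, to have unbounded discrete energy, and it is the only place where the two main theorems must be reconciled with Propositions \ref{propbdd} and \ref{polygon--expand}.
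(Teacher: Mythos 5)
Your proposal is correct and follows essentially the same route as the paper: the $\liminf$ inequality is obtained by combining the left-hand inequality in (\ref{emb}) with Theorem \ref{liminf}, and the recovery sequence is obtained from Theorem \ref{limsup} together with the right-hand inequality in (\ref{emb}), with the case $x \in X \setminus Y$ dismissed because $\mathcal{E}^{\alpha,q}$ is infinite there. Your extra bookkeeping (pinning the approximants to $\mathcal{P}_{n_k}(\mathcal{K})$ via the domain extension, and noting the compatibility with Propositions \ref{propbdd} and \ref{polygon--expand}) only makes explicit what the paper leaves as ``obvious.''
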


\begin{proof}
Put $ \gamma \in X $.
If
$ p_n \in X $
satisfies
$ d_X (p_n , \gamma ) \to 0 $
,
we have
$ \| p_n - \gamma \|_{L^1} \leq C_1^{-1} d_X (p_n , \gamma) \to 0 $.
Then, we have
\[
	\mathcal{E}^{\alpha,q} (\gamma)
	\leq 
	\liminf_{n \to \infty} \mathcal{E}^{\alpha,q}_n (p_n)
\]
using Theorem \ref{liminf}.
This implies that
$ \mathcal{E}_n^{\alpha ,q} $
satisfies the $\liminf$ inequality.

Now, we prove the $\limsup$ inequality.
The claim is obvious in the case where
$ \gamma \in X \setminus Y $.
Therefore, let
$ \gamma \in Y $.
Then, there exists
$ p_n \in \mathcal{P}_n ( \mathcal{K} ) $
such that
\begin{equation}
	\lim_{n \to \infty} d_X (p_n , \gamma) = 0,
	\ \lim_{n \to \infty} \mathcal{E}^{\alpha,q}_n (p_n)
	= \mathcal{E}^{\alpha,q} (\gamma)
	\label{gamma--rmk}
\end{equation}
by Theorem \ref{limsup} and (\ref{emb}).
In particular, we have
\[
	\mathcal{E}^{\alpha,q} (\gamma)
	\geq \limsup_{n \to \infty} \mathcal{E}^{\alpha,q}_n (p_n).
\]
\end{proof}

\begin{rem}\label{gamma--rem}
(\ref{gamma--rmk}) implies that
$ \mathcal{E}^{\alpha,q}_n $
not only
$ \Gamma $-converges to
$ \mathcal{E}^{\alpha,q} $
but also
satisfies the assumption of Lemma \ref{gamma--min}.
\end{rem}

The following corollary suggests the following: assume that a polygonal sequence has values of the discrete energy are sufficiently close to the minimum value for all numbers of vertices.
Then, this sequence converges to a curve, which is a right circle by \cite{ACFGH}.

\begin{cor}\label{minimizer1}
If
$ p_n \in \mathcal{P}_n ( \mathcal{K} ) $
and
$ \gamma \in \mathcal{C} ( \mathcal{K} ) $
satisfy
\[
	\left|
	\inf_{ \mathcal{P}_n (\mathcal{K}) } \mathcal{E}^{\alpha,q}_n
	- \mathcal{E}^{\alpha,q}_n (p_n)
	\right|
	\to 0,\ \ 
	d_X (p_n , \gamma ) \to 0,
\]
then
$ \gamma $
is the minimizer of
$ \mathcal{E}^{\alpha,q} $
in
$ \mathcal{C} ( \mathcal{K} ) $,
and we have
\[
	\lim_{n \to \infty} \mathcal{E}^{\alpha,q}_n (p_n)
	= \mathcal{E}^{\alpha,q} (\gamma).
\]
\end{cor}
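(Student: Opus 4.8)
The plan is to read the corollary as a direct application of Lemma \ref{gamma--min} with $\mathscr{F}_n = \mathcal{E}^{\alpha,q}_n$, $\mathscr{F} = \mathcal{E}^{\alpha,q}$, the subspace $Y$, and the sequence $z_n = p_n$, $z = \gamma$. By Remark \ref{gamma--rem} the two hypotheses of that lemma (the $\liminf$ inequality and the existence of recovery sequences on $Y$) are already verified for this pair of functionals, so the work is to match the present hypotheses to those of the lemma and then unwind the resulting chain of infima.

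First I would use the extension of the domain of $\mathcal{E}^{\alpha,q}_n$: for each fixed $n$, since $\mathcal{E}^{\alpha,q}_m(p_n) = \infty$ for $m \neq n$ and $\mathcal{E}^{\alpha,q}_m(\gamma) = \infty$ for every simply closed curve $\gamma$, the functional $\mathcal{E}^{\alpha,q}_n$ is finite on $X$ only at points of $\mathcal{P}_n(\mathcal{K})$. Hence
\[
	\inf_X \mathcal{E}^{\alpha,q}_n = \inf_{\mathcal{P}_n(\mathcal{K})} \mathcal{E}^{\alpha,q}_n,
\]
so the assumption $|\inf_{\mathcal{P}_n(\mathcal{K})} \mathcal{E}^{\alpha,q}_n - \mathcal{E}^{\alpha,q}_n(p_n)| \to 0$ is precisely the hypothesis $|\mathcal{E}^{\alpha,q}_n(p_n) - \inf_X \mathcal{E}^{\alpha,q}_n| \to 0$ required in Lemma \ref{gamma--min}. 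Together with $d_X(p_n,\gamma) \to 0$, the lemma yields
\[
	\mathcal{E}^{\alpha,q}(\gamma) \leq \liminf_{n \to \infty} \inf_X \mathcal{E}^{\alpha,q}_n \leq \inf_Y \mathcal{E}^{\alpha,q}.
\]

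Next I would identify $\inf_Y \mathcal{E}^{\alpha,q}$ with $\inf_{\mathcal{C}(\mathcal{K})} \mathcal{E}^{\alpha,q}$, which is what upgrades ``minimizer over $Y$'' to ``minimizer over $\mathcal{C}(\mathcal{K})$''. Since the underlying set of $Y$ is contained in $\mathcal{C}(\mathcal{K})$, the inequality $\inf_Y \mathcal{E}^{\alpha,q} \geq \inf_{\mathcal{C}(\mathcal{K})} \mathcal{E}^{\alpha,q}$ is immediate. For the reverse, any $\eta \in \mathcal{C}(\mathcal{K})$ with $\mathcal{E}^{\alpha,q}(\eta) < \infty$ lies in $W^{1+\sigma,2q}(\mathbb{S}_1,\mathbb{R}^d)$ by Proposition \ref{propbdd}, and is in particular $C^1$ by Sobolev embedding, so $\eta \in Y$; thus infinite-energy curves do not affect the infimum and the two agree. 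Combined with the displayed chain this gives $\mathcal{E}^{\alpha,q}(\gamma) \leq \inf_{\mathcal{C}(\mathcal{K})} \mathcal{E}^{\alpha,q}$, and since $\gamma \in \mathcal{C}(\mathcal{K})$ the reverse inequality is trivial, so $\gamma$ is a minimizer of $\mathcal{E}^{\alpha,q}$ in $\mathcal{C}(\mathcal{K})$ and $\mathcal{E}^{\alpha,q}(\gamma) = \inf_{\mathcal{C}(\mathcal{K})}\mathcal{E}^{\alpha,q} = \liminf_{n\to\infty}\inf_X \mathcal{E}^{\alpha,q}_n$.

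Finally, for the convergence of the energies I would combine two one-sided bounds. Because $d_X(p_n,\gamma)\to 0$ forces $\|p_n - \gamma\|_{L^1}\to 0$ by (\ref{emb}), Theorem \ref{liminf} gives $\mathcal{E}^{\alpha,q}(\gamma) \leq \liminf_{n\to\infty}\mathcal{E}^{\alpha,q}_n(p_n)$. For the matching upper bound, I would apply Theorem \ref{limsup} to $\gamma \in Y$ to produce a recovery sequence $\tilde{p}_n \in \mathcal{P}_n(\mathcal{K})$ with $\mathcal{E}^{\alpha,q}_n(\tilde{p}_n) \to \mathcal{E}^{\alpha,q}(\gamma)$; then $\inf_X \mathcal{E}^{\alpha,q}_n \leq \mathcal{E}^{\alpha,q}_n(\tilde{p}_n)$, and the hypothesis $\mathcal{E}^{\alpha,q}_n(p_n) = \inf_X \mathcal{E}^{\alpha,q}_n + o(1)$ gives $\limsup_{n\to\infty}\mathcal{E}^{\alpha,q}_n(p_n) \leq \mathcal{E}^{\alpha,q}(\gamma)$. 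The two bounds force $\lim_{n\to\infty}\mathcal{E}^{\alpha,q}_n(p_n) = \mathcal{E}^{\alpha,q}(\gamma)$. I expect the only genuinely delicate point to be the bookkeeping that keeps the infima over $X$, $\mathcal{P}_n(\mathcal{K})$, $Y$, and $\mathcal{C}(\mathcal{K})$ aligned, and in particular the verification that a finite-energy curve automatically belongs to $Y$; this regularity step, resting on Proposition \ref{propbdd}, is what makes $\gamma$ a minimizer over the full class $\mathcal{C}(\mathcal{K})$ rather than merely over $Y$.
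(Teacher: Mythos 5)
Your proposal is correct and follows exactly the route the paper intends: Corollary \ref{minimizer1} is stated there without proof as a direct consequence of Lemma \ref{gamma--min}, whose hypotheses Remark \ref{gamma--rem} certifies via Theorems \ref{liminf} and \ref{limsup}, and your bookkeeping of the infima over $X$, $\mathcal{P}_n(\mathcal{K})$, $Y$ and $\mathcal{C}(\mathcal{K})$ together with the two one-sided energy bounds supplies precisely the details the paper leaves implicit. The only point to watch is that the embedding $W^{1+\sigma,2q}\hookrightarrow C^1$ you invoke to place finite-energy curves in $Y$ is borderline in the critical case $\alpha q=2$, but this identification of $\inf_Y$ with $\inf_{\mathcal{C}(\mathcal{K})}$ is equally implicit in the paper's own formulation.
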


%
%

\section{Minimizers of $ \mathcal{E}_n^{\alpha ,q} $}

In this section,
we consider minimizers of a generalized discrete energy using techniques of \cite{ACFGH}.
In what follows, we set
$ \Omega := \{ (x,y) \in \mathbb{R}^2 \,|\, 0 < x \leq y \} $.
\begin{thm}\label{ippan--minimizer}
Let
$ F : \Omega \to \mathbb{R} $
be a function such that, if we set
$ g_y (u) = F( \sqrt{u} ,y ) $
for
$ u \in (0 , y^2 ] $
and
$ y \in (0, 1/2) $,
then
$ g_y $
is decreasing  and convex.
For a polygon with $n$ edges with total length $1$, set
\[
	\mathcal{E}_F ( p_n ) :=
	\sum_{\substack{i,j=1\\i \ne j}}^n
	F( | p_n (a_j) - p_n (a_i) | , |a_j-a_i| )
	| p_n (a_{i+1}) - p_n (a_i) |
	| p_n (a_{j+1}) - p_n (a_j) |.
\]
Moreover,
for $0< a < b$,
set $ [ a ]_b := \min\{ a, b-a \} $.
Then, if $ p_n \in \mathcal{P}_n ( \mathcal{K} ) $, we have
\[
	\mathcal{E}_F ( p_n )
	\geq
	\frac{1}{n} \sum_{k=1}^{n-1}
	F \left(
	\frac{1}{n} \frac{ \sin ([k]_n \pi /n) }{ \sin ( \pi /n ) } , |a_k - a_0|
	\right),
\]
and the minimizers of
$ \mathcal{E}_F $
are regular polygons with $n$ edges.
\end{thm}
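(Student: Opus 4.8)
The plan is to strip the general weight $F$ down to a purely geometric statement about sums of squared chords, invoking the hypotheses on $g_y$ — convexity and monotonicity — exactly once each. Since $p_n\in\mathcal{P}_n(\mathcal{K})$ is equilateral of total length $1$, every edge has $|p_n(a_{i+1})-p_n(a_i)|=1/n$, so writing $P_i:=p_n(a_i)$ and extending indices cyclically modulo $n$,
\[
	\mathcal{E}_F(p_n)=\frac{1}{n^2}\sum_{\substack{i,j=1\\ i\ne j}}^n F\bigl(|P_j-P_i|,|a_j-a_i|\bigr).
\]
I would group the ordered pairs $(i,j)$ by their cyclic gap $k:=(j-i)\bmod n\in\{1,\ldots,n-1\}$. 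For fixed $k$ the intrinsic distance is independent of $i$, namely $|a_{i+k}-a_i|=[k]_n/n=:y_k\in(0,1/2]$, so with $g_{y_k}(u)=F(\sqrt{u},y_k)$,
\[
	\mathcal{E}_F(p_n)=\frac{1}{n^2}\sum_{k=1}^{n-1}\sum_{i=1}^n g_{y_k}\!\bigl(|P_{i+k}-P_i|^2\bigr).
\]
As the straight chord is no longer than the shorter polygonal arc, $|P_{i+k}-P_i|\le y_k$, so each argument $|P_{i+k}-P_i|^2$ lies in $(0,y_k^2]$ and $g_{y_k}$ is evaluated on its domain.

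Now I would use the two hypotheses. First, convexity of $g_{y_k}$ and Jensen's inequality give, for each fixed $k$,
\[
	\frac{1}{n}\sum_{i=1}^n g_{y_k}\!\bigl(|P_{i+k}-P_i|^2\bigr)
	\ge
	g_{y_k}\!\Bigl(\frac{1}{n}\sum_{i=1}^n|P_{i+k}-P_i|^2\Bigr).
\]
Second, since $g_{y_k}$ is decreasing, replacing its argument by any \emph{larger} value only decreases the right-hand side; thus it suffices to bound the averaged squared chord $\frac1n\sum_i|P_{i+k}-P_i|^2$ from above by the value attained by the regular $n$-gon. Writing $c_k:=\frac1n\sin([k]_n\pi/n)/\sin(\pi/n)$ for the length of the chord spanning $k$ edges of a regular $n$-gon of perimeter $1$, the missing ingredient is the geometric estimate $\frac1n\sum_{i=1}^n|P_{i+k}-P_i|^2\le c_k^2$, with equality for the regular polygon. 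Granting this, monotonicity yields $g_{y_k}(\frac1n\sum_i|P_{i+k}-P_i|^2)\ge g_{y_k}(c_k^2)=F(c_k,y_k)$, and summing over $k$ reproduces exactly the asserted bound $\mathcal{E}_F(p_n)\ge\frac1n\sum_{k=1}^{n-1}F(c_k,|a_k-a_0|)$.

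The heart of the matter — and the step I expect to be the main obstacle — is this chord inequality, which is the sum-of-squared-chords estimate of \cite{ACFGH}. I would prove it by discrete Fourier analysis of the edge vectors $e_i:=P_{i+1}-P_i$. With $\omega:=e^{2\pi\sqrt{-1}/n}$ and $\hat e_r:=\sum_{j=1}^n e_j\,\omega^{-jr}$, closedness $\sum_j e_j=0$ gives $\hat e_0=0$, while Parseval together with $|e_j|=1/n$ gives $\sum_{r=1}^{n-1}|\hat e_r|^2=n\sum_j|e_j|^2=1$. Expanding $P_{i+k}-P_i=\sum_{m=0}^{k-1}e_{i+m}$, applying the inversion $e_j=\frac1n\sum_r\hat e_r\omega^{jr}$, and using $\sum_i\omega^{i(r-r')}=n\delta_{rr'}$ together with $|\omega^{kr}-1|=2|\sin(\pi kr/n)|$, one obtains
\[
	\sum_{i=1}^n|P_{i+k}-P_i|^2
	=\frac{1}{n}\sum_{r=1}^{n-1}|\hat e_r|^2\,\frac{\sin^2(\pi kr/n)}{\sin^2(\pi r/n)}.
\]
Since $\sum_r|\hat e_r|^2=1$, the right-hand side is a weighted average of the quantities $D_k(r):=\sin^2(\pi kr/n)/\sin^2(\pi r/n)$, hence is at most $\frac1n\max_{1\le r\le n-1}D_k(r)$. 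The estimate then reduces to the scalar inequality $D_k(r)\le D_k(1)=\sin^2(\pi k/n)/\sin^2(\pi/n)$ for every $r$ (using the symmetry $r\leftrightarrow n-r$ and the decay of the Dirichlet quotient along the grid $\pi r/n$), which gives $\sum_i|P_{i+k}-P_i|^2\le\frac1n D_k(1)=n\,c_k^2$; this scalar fact is exactly what \cite{ACFGH} supply, and it is where the genuine work resides.

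Finally, for the minimizer statement, a minimizer saturates the total bound, hence every term, so (assuming $g_{y_k}$ strictly decreasing, as holds in the applications) it saturates the chord inequality at each gap. Taking $k=2$, where $D_2(r)=4\cos^2(\pi r/n)$ attains its maximum over $r\in\{1,\ldots,n-1\}$ strictly at $r=1$ and $r=n-1$, equality forces $\hat e_r=0$ for all $r\ne 1,n-1$; this concentration of the edge spectrum on the first mode characterizes the regular $n$-gon up to congruence and scaling. Conversely, for the regular polygon every chord $|P_{i+k}-P_i|$ equals $c_k$, so Jensen's inequality and the chord bound are both equalities and the lower bound is attained. Hence the regular polygons realize the bound and are precisely the minimizers (the case $n=3$ being trivial, since every equilateral triangle is regular).
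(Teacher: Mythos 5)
Your skeleton is the same as the paper's: use equilaterality to rewrite $\mathcal{E}_F(p_n)$ as $\tfrac1{n^2}\sum_{k}\sum_i F(|P_{i+k}-P_i|,[k]_n/n)$, then for each fixed gap $k$ compare the chord distribution with that of the regular $n$-gon. The paper does the per-gap step in one stroke by setting $f_k(x)=-F(\sqrt x,|a_k-a_0|)$ (increasing and concave, by the hypotheses on $g_y$) and citing Lemma \ref{dm--lem} of G\'abor/\cite{ACFGH}, including its equality clause. You instead unpack that lemma into Jensen's inequality plus monotonicity plus the chord estimate $\sum_i|P_{i+k}-P_i|^2\le n c_k^2$, and you sketch a Fourier proof of the latter, reducing it to the scalar inequality $\sin^2(\pi kr/n)/\sin^2(\pi r/n)\le\sin^2(\pi k/n)/\sin^2(\pi/n)$ --- which you then defer to \cite{ACFGH} anyway. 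So the inequality part is correct and carries the same mathematical content as the paper's proof, just with the black box opened one layer further; your Parseval computation and the normalization $\sum_r|\hat e_r|^2=1$ check out, and your verification that the regular polygon attains the bound matches the paper's.

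The one place you genuinely diverge is the characterization of minimizers, and there your argument has a concrete gap. First, you assume $g_{y}$ is \emph{strictly} decreasing ``as holds in the applications''; the theorem as stated only assumes decreasing and convex, and without some strictness neither Jensen nor the monotonicity step pins down the equality case (the paper sidesteps this by invoking the equality clause of Lemma \ref{dm--lem} wholesale). Second, your $k=2$ spectral argument fails for $n=4$: there $D_2(r)=4\cos^2(\pi r/4)$ takes the values $2,0,2$, so equality in the $k=2$ chord bound only forces $\hat e_2=0$, which for a closed equilateral quadrilateral means $e_3=-e_1$, $e_4=-e_2$ --- i.e.\ any rhombus, not just the square. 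Indeed every rhombus of perimeter $1$ satisfies $d_1^2+d_2^2=1/4$ and saturates $\sum_i|P_{i+2}-P_i|^2\le n c_2^2$, so mode concentration on $r=\pm1$ is simply not a consequence of equality when $n=4$; one must additionally use strict convexity of $g_{1/2}$ to split $d_1=d_2$. For $n\ge5$ your argument is fine (there $D_2(r)<D_2(1)$ strictly for $2\le r\le n-2$, and spectrum on $\{\pm1\}$ together with $|e_j|\equiv 1/n$ forces $\hat e_1\cdot\hat e_1=0$ and hence a planar regular $n$-gon), so the fix is only to treat $n=4$ separately, e.g.\ via the convexity of $g$ applied to the two diagonals.
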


The proof of Theorem \ref{ippan--minimizer} makes use of the following lemma.

\begin{lem}[$\mbox{\cite[Theorem II]{Gab}}$,\ $\mbox{\cite[Lemma 7]{ACFGH}}$]\label{dm--lem}
Let
$ n \geq 4 $,
and put
$ k = 1, \ldots ,n $.
Let
$ f : \mathbb{R} \to \mathbb{R} $
be an increasing and concave function.
Then, there exists
$ c >0 $
with
$ | v_{i+1} - v_i | \leq c $
such that
\[
	\frac{1}{n} \sum_{i=1}^n f(| v_{i+k} - v_i |^2)
	\leq
	f
	\left(
	c^2 \frac{ \sin^2 ([k]_n \pi /n) }{ \sin^2 ( \pi /n ) } 
	\right)
\]
for all
$ v_1, \ldots ,v_n \in \mathbb{R}^d $
with
$ v_{n+i} = v_i $
for
$ i=1, \ldots ,n $.
Equality holds in the above inequality only
when the polygon which is made by joining
$ v_1 , \ldots ,v_n $
by segments in turn is a regular polygon with $n$ edges.
\end{lem}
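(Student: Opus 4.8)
The plan is to pass from the vertices to the edge vectors $w_i := v_{i+1}-v_i$ and combine the concavity and monotonicity of $f$ with a sharp spectral bound on the averaged squared $k$-th diagonals; throughout I take $c := \max_{i}|v_{i+1}-v_i|$, for which the constraint $|w_i|\le c$ holds by construction. Closedness of the polygon is exactly $\sum_{i=1}^{n}w_i=0$, and $v_{i+k}-v_i=\sum_{j=0}^{k-1}w_{i+j}$. Since $f$ is concave, Jensen's inequality gives
\[
	\frac{1}{n}\sum_{i=1}^{n} f\bigl(|v_{i+k}-v_i|^2\bigr)
	\le
	f\Bigl(\frac{1}{n}\sum_{i=1}^{n}|v_{i+k}-v_i|^2\Bigr),
\]
so, as $f$ is increasing, the whole statement reduces to the purely geometric estimate
\[
	\frac{1}{n}\sum_{i=1}^{n}|v_{i+k}-v_i|^2
	\le
	c^2\,\frac{\sin^2([k]_n\pi/n)}{\sin^2(\pi/n)}.
\]

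I would establish this estimate by discrete Fourier analysis on $\mathbb{Z}/n\mathbb{Z}$. Put $\zeta:=e^{2\pi i/n}$ and $\hat w_\ell:=\sum_{j=0}^{n-1}w_j\,\zeta^{-j\ell}\in\mathbb{C}^d$. Closedness is precisely $\hat w_0=0$, and Parseval's identity reads $\sum_{i}|w_i|^2=\tfrac1n\sum_{\ell}|\hat w_\ell|^2\le nc^2$. Since $v_{i+k}-v_i$ has Fourier coefficients $\hat w_\ell S_\ell$ with $S_\ell=\sum_{j=0}^{k-1}\zeta^{j\ell}$ and $|S_\ell|=|\sin(k\ell\pi/n)|/|\sin(\ell\pi/n)|$ for $\ell\ne0$, a second application of Parseval gives
\[
	\sum_{i=1}^{n}|v_{i+k}-v_i|^2
	=
	\frac1n\sum_{\ell=1}^{n-1}|\hat w_\ell|^2\,\frac{\sin^2(k\ell\pi/n)}{\sin^2(\ell\pi/n)},
\]
the mode $\ell=0$ dropping out thanks to $\hat w_0=0$. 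Granting the eigenvalue inequality $\sin^2(k\ell\pi/n)/\sin^2(\ell\pi/n)\le\sin^2(k\pi/n)/\sin^2(\pi/n)$ for $1\le\ell\le n-1$ and using $\sin^2(k\pi/n)=\sin^2([k]_n\pi/n)$, this is bounded by $\frac{\sin^2([k]_n\pi/n)}{\sin^2(\pi/n)}\cdot\frac1n\sum_{\ell}|\hat w_\ell|^2=\frac{\sin^2([k]_n\pi/n)}{\sin^2(\pi/n)}\sum_i|w_i|^2\le nc^2\frac{\sin^2([k]_n\pi/n)}{\sin^2(\pi/n)}$, which is the desired estimate after dividing by $n$.

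The eigenvalue inequality is the heart of the matter, and I expect it to be the \emph{main obstacle}. Geometrically, $\sin^2(k\ell\pi/n)/\sin^2(\ell\pi/n)$ is exactly the squared $k$-th diagonal of the regular star polygon $\{n/\ell\}$ normalized by its squared edge length, so the inequality asserts that among all regular star polygons the convex one ($\ell=1$) has the largest $k$-th diagonals; this is the content of the cited results \cite{Gab} and \cite{ACFGH}. For a self-contained argument I would reduce it, via $\sin^2(k\ell\pi/n)=\sin^2([k\ell\bmod n]_n\,\pi/n)$ together with the reductions $k\mapsto[k]_n$ and $\ell\mapsto[\ell]_n$, to the trigonometric inequality $|\sin(mt\pi/n)|\sin(\pi/n)\le\sin(m\pi/n)\sin(t\pi/n)$ for integers $1\le m,t\le n/2$. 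The delicate point is that the Dirichlet-type ratio $\sin(k\phi)/\sin\phi$ oscillates and is \emph{not} monotone in $\phi$ on the continuum; the inequality survives only because $\phi$ is restricted to the grid $\{\ell\pi/n\}$, so that the growth of the denominator dominates the oscillation of the numerator, and controlling this dominance requires a careful and somewhat technical trigonometric induction.

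Finally, for the equality clause, suppose equality holds in the displayed bound of the lemma. Strict monotonicity of $f$ then forces $\frac1n\sum_i|v_{i+k}-v_i|^2=c^2\sin^2([k]_n\pi/n)/\sin^2(\pi/n)$, i.e.\ equality in the spectral estimate. Since $n\ge4$, the eigenvalue inequality is \emph{strict} for $2\le\ell\le n-2$, so equality forces $\hat w_\ell=0$ for every $\ell\notin\{1,n-1\}$ and simultaneously $\sum_i|w_i|^2=nc^2$, i.e.\ every edge has length exactly $c$. Inverting the Fourier transform then gives $w_i=\tfrac1n(\hat w_1\zeta^{i}+\overline{\hat w_1}\,\zeta^{-i})$, so the edge vectors rotate by the constant angle $2\pi/n$ at each step; the equilength condition forces the real and imaginary parts of $\hat w_1$ to be orthogonal and of equal length, whence the $w_i$ sweep out a circle in equal angular increments and the polygon is the convex regular $n$-gon. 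This identifies the unique equality configuration and completes the argument.
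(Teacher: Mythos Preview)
The paper does not give its own proof of this lemma; it is quoted verbatim from \cite{Gab} and \cite{ACFGH} and then used as a black box in the proof of Theorem~\ref{ippan--minimizer}. So there is no ``paper's proof'' to compare against. Your proposal is in fact a reconstruction of the argument behind those references, and the overall architecture---Jensen to reduce to a quadratic mean, then the discrete Fourier/circulant computation to reduce to an eigenvalue bound---is correct and is essentially the modern way these chord-length inequalities are proved.

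Two comments on the details. First, you are right to flag the inequality
\[
\frac{\sin^2(k\ell\pi/n)}{\sin^2(\ell\pi/n)}\le\frac{\sin^2(k\pi/n)}{\sin^2(\pi/n)}\qquad(1\le\ell\le n-1)
\]
as the heart of the matter: this \emph{is} Gábor's theorem, and your sketch of a reduction to a grid-restricted Dirichlet-kernel bound is on the right track but not yet a proof. Since the lemma is itself a citation, deferring this step to \cite{Gab,ACFGH} is acceptable in context; just be aware that your ``technical trigonometric induction'' is where all the work hides.

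Second, your equality analysis tacitly upgrades the hypotheses: you invoke \emph{strict} monotonicity of $f$ to pass from equality in $f(A)\le f(B)$ to $A=B$, and you need strictness of the eigenvalue inequality for $2\le\ell\le n-2$ to kill the intermediate Fourier modes. The lemma as stated only assumes $f$ increasing and concave, so the ``only if'' clause is, strictly speaking, false without strictness (take $f$ constant). In the paper's application the function $f_k$ is strictly increasing and strictly concave, so this does no harm there, but your write-up should either note the extra hypothesis or state the equality clause conditionally. The rest of your rigidity argument---$\hat w_\ell=0$ for $\ell\notin\{1,n-1\}$ together with $|w_i|\equiv c$ forces the edge vectors to be a planar equiangular rotation, hence a convex regular $n$-gon---is correct.
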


\begin{proof}[\bf{Proof of Theorem \ref{ippan--minimizer}}]
Since
$ p_n $
is an equilateral polygon, we have
\[
	\mathcal{E}_F (p_n)
	=
	\frac{1}{n^2}
	\sum_{k=1}^{n-1} \sum_{i=1}^n
	F( | p_n (a_{i+k} ) - p_n (a_i) | , |a_k - a_0| ).
\]
For
$ k=1, \ldots ,n $,
set
\[
	f_k (x) 
	=
	\left\{
	\begin{array}{ll}
		- F( \sqrt{x} , | a_k - a_0 | ) & ( 0 < x < | a_k - a_0 |^2 ),
	\\
		- F( | a_k - a_0 | , | a_k - a_0 | ) & ( x \geq | a_k - a_0 |^2 ).
	\end{array}
	\right.
\]
Then,
$ f_k(x) $
is an increasing and concave function on
$ 0 < x < | a_k - a_0 |^2 $.
Hence, using Lemma \ref{dm--lem}, we have
\begin{align*}
	\frac{1}{n} \sum_{i=1}^n
	F( | p_n (a_{i+k} ) - p_n (a_i) | , |a_k - a_0| )
	= &\,
	- \frac{1}{n} \sum_{i=1}^n
	f_k ( | p_n (a_{i+k} ) - p_n (a_i) |^2 )
\\
	\geq &\,
	- f_k
	\left(
	\frac{1}{n^2} \frac{ \sin^2 ([k]_n \pi /n) }{ \sin^2 ( \pi /n ) } 
	\right),
\end{align*}
where the equality holds only when
$ p_n $
is a regular polygon with $n$ edges by the condition of equality in Lemma \ref{dm--lem}.

Let
$ g_n \in \mathcal{P}_n ( \mathcal{K} ) $
be a regular polygon with $n$ edges,
and suppose
$ 1 \leq k \leq n $.
Then, we have
\begin{align*}
	\frac{1}{n} \frac{ \sin ([k]_n \pi /n) }{ \sin ( \pi /n ) }
	= | g_n(a_k) - g_n(a_0) |
	=&\ | g_n(a_{i+k}) - g_n(a_i) |,
\\
	| a_k-a_0 | =&\ |a_{i+k} - a_i |
\end{align*}
for all
$ i=1, \ldots ,n-1 $.
Hence, we obtain
\begin{align*}
	\mathcal{E}_F (p_n)
	\geq &\,
	- \frac{1}{n} \sum_{k=1}^{n-1}
	f_k
	\left(
	\frac{1}{n^2} \frac{ \sin^2 ([k]_n \pi /n) }{ \sin^2 ( \pi /n ) } 
	\right)
\\
	= &\ 
	\frac{1}{n} \sum_{k=1}^{n-1}
	F \left(
	\frac{1}{n} \frac{ \sin ([k]_n \pi /n) }{ \sin ( \pi /n ) } , |a_k - a_0|
	\right)
	=
	\mathcal{E}_F (g_n).
\end{align*}
Therefore, minimizers of $ \mathcal{E}_F $ are regular polygons with $n$ edges.
\end{proof}

Applying Theorem \ref{ippan--minimizer} to
$ \mathcal{E}_n^{\alpha ,q} $,
we obtain the following corollary.

\begin{cor}\label{dm--est}
Let
$ \alpha \in (0, \infty ) $
and
$ q \in [1, \infty ) $.
Then, for all equilateral polygons with $n$ edges $p_n$,
we have
\[
	\mathcal{E}_n^{\alpha ,q} (p_n)
	\geq
	\frac{n^{\alpha q -1}}{\alpha}
	\sum_{k=1}^{n-1}
	\left(
	\frac{ \sin^\alpha ( \pi /n ) }{ \sin^\alpha ([k]_n \pi /n) }
	-
	\frac{1}{[k]_n^\alpha}
	\right)^q
\]
with equality if and only if
$ p_n $
is a regular polygon with $n$ edges.
\end{cor}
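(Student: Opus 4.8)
The plan is to obtain this corollary as a direct application of Theorem \ref{ippan--minimizer} with a suitable choice of $F$. Since $\mathcal{E}_n^{\alpha ,q}$ is scale invariant (the factor $L^{\alpha q-2}$ in its definition is exactly what makes it so), I would first normalize to total length $L=1$. With this normalization the definition of $\mathcal{E}_n^{\alpha ,q}$ coincides verbatim with the functional $\mathcal{E}_F$ of Theorem \ref{ippan--minimizer} once we set
\[
	F(r,y) := \frac{1}{\alpha}\left( \frac{1}{r^\alpha} - \frac{1}{y^\alpha} \right)^q,
\]
where $r=|p_n(a_j)-p_n(a_i)|$ is the chord length and $y=|a_j-a_i|$ is the intrinsic distance. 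Thus the entire task reduces to verifying the hypotheses on $F$ and then evaluating the resulting lower bound.

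Next I would check the structural hypotheses. Put $g_y(u) = F(\sqrt{u},y) = \tfrac{1}{\alpha}\bigl(u^{-\alpha/2}-y^{-\alpha}\bigr)^q$ for $u\in(0,y^2]$, and write $h(u):=u^{-\alpha/2}-y^{-\alpha}$. On this interval $h$ is nonnegative (vanishing precisely at $u=y^2$), strictly decreasing, and convex, since $u\mapsto u^{-\alpha/2}$ has all three properties for every $\alpha>0$. Monotonicity of $g_y$ is then immediate because $q\geq 1$ and the base is nonnegative. For convexity I would write $g_y=\tfrac{1}{\alpha}h^q$ and compute
\[
	g_y'' = \frac{q}{\alpha}\,h^{q-2}\bigl( (q-1)(h')^2 + h\,h'' \bigr),
\]
which is nonnegative because $h\geq 0$, $h''\geq 0$, and $q\geq 1$ force every factor to be nonnegative on $(0,y^2)$ (the case $q=1$ is even simpler, as $g_y=\tfrac{1}{\alpha}h$ inherits convexity from $h$). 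These analytic properties hold for all $y>0$, hence in particular for every intrinsic distance occurring in the sum.

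Finally I would apply Theorem \ref{ippan--minimizer} and evaluate. The theorem gives $\mathcal{E}_n^{\alpha ,q}(p_n)=\mathcal{E}_F(p_n)\geq \tfrac{1}{n}\sum_{k=1}^{n-1}F(\cdots)$, with equality exactly for regular polygons, so it remains to substitute the correct arguments. For the equilateral polygon of edge length $1/n$ the chord entering $F$ is $\tfrac{1}{n}\tfrac{\sin([k]_n\pi/n)}{\sin(\pi/n)}$, while the intrinsic distance between vertices $0$ and $k$ is $|a_k-a_0|=[k]_n/n$. Substituting these, each summand becomes
\[
	F\!\left(\frac{1}{n}\frac{\sin([k]_n\pi/n)}{\sin(\pi/n)},\,\frac{[k]_n}{n}\right)
	= \frac{n^{\alpha q}}{\alpha}\left( \frac{\sin^\alpha(\pi/n)}{\sin^\alpha([k]_n\pi/n)} - \frac{1}{[k]_n^\alpha} \right)^q,
\]
and multiplying by $1/n$ and summing yields exactly the claimed right-hand side; the equality characterization is inherited directly from Theorem \ref{ippan--minimizer} (our $g_y$ is strictly monotone and strictly convex, so the associated $f_k$ is strictly increasing and strictly concave, which is what pins down regularity).

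The main obstacle is the convexity verification in the second step: one must ensure the base $h$ stays nonnegative precisely on $(0,y^2]$ so that $(\cdot)^q$ is well defined and $g_y''$ has a controlled sign. The mild degeneracy at $u=y^2$, where $h=0$ and $h^{q-2}$ may blow up for $1\le q<2$, needs a remark but is harmless since $g_y''$ remains nonnegative there. Everything else is routine bookkeeping and trigonometric substitution.
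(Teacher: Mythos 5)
Your proposal is correct and follows essentially the same route as the paper: both apply Theorem \ref{ippan--minimizer} with $F(x,y)=\bigl(x^{-\alpha}-y^{-\alpha}\bigr)^q$ (you merely absorb the harmless factor $1/\alpha$ into $F$), check that $u\mapsto F(\sqrt{u},y)$ is decreasing and convex, and then substitute the chord lengths of the regular $n$-gon to evaluate the lower bound. The only difference is that you spell out the convexity verification and the scale normalization, which the paper leaves implicit.
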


\begin{proof}
For
$ (x,y) \in \Omega $,
set
\[
	F(x,y) :=
	\left(
	\frac{1}{x^\alpha} - \frac{1}{y^\alpha}
	\right)^q.
\]
Then, we have
$ F( \sqrt{u} , y ) $
is decreasing and convex on
$ u \in (0,y^2] $
whenever
$ y \in (0,1/2) $.
Therefore,
$ F $
satisfies the assumption of Theorem \ref{ippan--minimizer}.

Using Theorem \ref{ippan--minimizer},
we obtain
\[
	\mathcal{E}_n^{\alpha,q} (p_n)
	\geq 
	\frac{n^{\alpha q -1}}{\alpha}
	\sum_{k=1}^{n-1}
	\left(
	\frac{ \sin^\alpha ( \pi /n ) }{ \sin^\alpha ([k]_n \pi /n) }
	-
	\frac{1}{[k]_n^\alpha}
	\right)^q
\]
for all equilateral polygons
$ p_n $
with $n$ edges.
By the condition of equality in Lemma \ref{dm--lem},
equality holds in the above inequality only when
$ p_n $
is a regular polygon with $n$ edges.
\end{proof}

By Corollary \ref{dm--est},
we obtain the following claim about the minimizers of
$ \mathcal{E}_n^{\alpha,q} $.

\begin{thm}[Minimizers of $ \mathcal{E}_n^{\alpha ,q} $]
\label{discrete--minimizer}
Let
$ \alpha \in (0, \infty ) $
and
$ q \in [1, \infty ) $.
Then, minimizers of
$ \mathcal{E}_n^{\alpha ,q} $
in the set of equilateral polygons with $n$ edges are regular polygons.
Especially, a regular polygon with $n$ edges is the only minimizer except for congruent transformations and similar transformations.
\end{thm}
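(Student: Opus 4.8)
The plan is to derive Theorem \ref{discrete--minimizer} directly from Corollary \ref{dm--est}, which already contains all of the analytic content; the only things left to observe are that the lower bound there is attained exactly by regular polygons and that the family of minimizers is a single congruence/similarity orbit.

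First I would fix $n$ and abbreviate the right-hand side of Corollary \ref{dm--est} as
\[
	m_n := \frac{n^{\alpha q-1}}{\alpha}
	\sum_{k=1}^{n-1}
	\left(
	\frac{\sin^\alpha(\pi/n)}{\sin^\alpha([k]_n\pi/n)}
	-
	\frac{1}{[k]_n^\alpha}
	\right)^q.
\]
By Corollary \ref{dm--est}, every equilateral polygon $p_n$ with $n$ edges satisfies $\mathcal{E}_n^{\alpha,q}(p_n)\ge m_n$, with equality if and only if $p_n$ is a regular polygon. The key point is that $m_n$ is a constant independent of the chosen polygon; evaluating the same corollary at a regular $n$-gon $g_n$ gives $\mathcal{E}_n^{\alpha,q}(g_n)=m_n$. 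Hence the infimum of $\mathcal{E}_n^{\alpha,q}$ over the set of equilateral polygons with $n$ edges equals $m_n$ and is attained precisely on the regular polygons, which is exactly the statement that every minimizer is a regular polygon.

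For the uniqueness assertion I would invoke the equality condition of Lemma \ref{dm--lem}, which forces a minimizer to be equilateral and equiangular, hence a genuine planar regular $n$-gon lying in some $2$-dimensional affine subspace of $\mathbb{R}^d$. Any two regular $n$-gons are similar, and two of them with a common edge length (equivalently, common total length, since they are equilateral with $n$ edges) are congruent; thus the entire set of minimizers is the orbit of one regular $n$-gon under isometries and scalings of $\mathbb{R}^d$. This is precisely the claim that the regular polygon with $n$ edges is the unique minimizer up to congruent and similar transformations, and scale invariance of $\mathcal{E}_n^{\alpha,q}$ (through the factor $L^{\alpha q-2}$) is what makes the similarity transformations admissible.

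I do not expect a substantial obstacle, since the real work was already done in Theorem \ref{ippan--minimizer} and Corollary \ref{dm--est}, where the hypothesis that $F(\sqrt{u},y)=(u^{-\alpha/2}-y^{-\alpha})^q$ is decreasing and convex in $u$ was checked and Lemma \ref{dm--lem} (Gabriel's inequality, cf.\ \cite{Gab}, \cite{ACFGH}) was applied. The only step needing a little care is the observation that the lower bound of Corollary \ref{dm--est} is simultaneously the value realized by a regular polygon, so that the inequality is sharp and its equality case characterizes the minimizers; this is immediate from the chain of equalities relating $m_n$ to $\mathcal{E}_F(g_n)$ at the end of the proof of Theorem \ref{ippan--minimizer}.
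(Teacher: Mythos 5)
Your proposal is correct and matches the paper's argument, which likewise obtains Theorem \ref{discrete--minimizer} as an immediate consequence of Corollary \ref{dm--est} together with the equality case of Lemma \ref{dm--lem}. The extra remarks you add (that the lower bound $m_n$ is attained by the regular $n$-gon, and that all regular $n$-gons form a single similarity orbit) are exactly the observations the paper leaves implicit.
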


From Theorem \ref{discrete--minimizer},
we immediately obtained the following property of minimizers of
$\mathcal{E}^{\alpha ,q}_n$.

\begin{cor}\label{minimizer--circle}
Let
$ \alpha \in (0, \infty ) $
and
$ q \in [1, \infty ) $,
and let
$ p_n $
satisfy
$
	\mathcal{E}^{\alpha,q}_n (p_n)
	= \inf_{\mathcal{P}_n (\mathcal{K})} \mathcal{E}^{\alpha,q}_n
$.
Then, there exists a similar transformation such that
$ \{ p_n \} $
converges to a right circle in the sense of
$ W^{1, \infty} $
as
$ n \to \infty $.
\end{cor}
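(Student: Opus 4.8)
The plan is to reduce the claim to the elementary fact that regular polygons converge to the round circle; this is exactly why the corollary is obtained \emph{immediately} from Theorem~\ref{discrete--minimizer}. Since $p_n$ minimizes $\mathcal{E}_n^{\alpha,q}$ over $\mathcal{P}_n(\mathcal{K})$ and, by Theorem~\ref{discrete--minimizer}, the unique minimizer among all equilateral $n$-gons (up to congruent and similar transformations) is the regular $n$-gon, which is unknotted, the hypothesis forces $\mathcal{K}$ to be the trivial class and $p_n$ to be, up to a congruent transformation, the regular $n$-gon. Because the regular $n$-gon is unique up to similar transformations, I would choose for each $n$ a similar transformation $T_n$—in fact a rigid motion, since every curve in $\mathcal{P}_n(\mathcal{K})$ has length $1$—placing $T_np_n$ in the standard position: the arc-length parametrization of the regular $n$-gon inscribed in the circle of radius $R_n:=1/(2n\sin(\pi/n))$, the circumradius forced by the edge length $1/n$, with vertices $T_np_n(k/n)$ sitting at the angles $2\pi k/n$ for $k=0,\dots,n-1$. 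Writing $\gamma(s):=\frac{1}{2\pi}(\cos 2\pi s,\sin 2\pi s,0,\dots,0)$ for the arc-length parametrization of the round circle of circumference $1$, the task reduces to showing $T_np_n\to\gamma$ in $W^{1,\infty}(\mathbb{S}_1,\mathbb{R}^d)$.

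For the $L^\infty$-part I would use that on each edge $s\in[k/n,(k+1)/n]$ both $T_np_n(s)$ and $\gamma(s)$ lie over the angular sector $[2\pi k/n,2\pi(k+1)/n]$, so that the distance is controlled by the radial defect and the $O(1/n)$ chord sagitta and angular mismatch:
\[
	\|T_np_n-\gamma\|_{L^\infty(\mathbb{S}_1,\mathbb{R}^d)}
	\le
	\Bigl|R_n-\tfrac{1}{2\pi}\Bigr|+\frac{C}{n}\longrightarrow 0
\]
as $n\to\infty$, where $R_n\to 1/(2\pi)$ because $n\sin(\pi/n)\to\pi$. The derivative estimate is the crux, and a sum-to-product computation makes it exact: on the $k$-th edge the unit chord direction is
\[
	(T_np_n)'(s)=\bigl(-\sin\theta_{k+1/2},\cos\theta_{k+1/2},0,\dots,0\bigr),
	\qquad \theta_{k+1/2}:=\frac{2\pi}{n}\Bigl(k+\tfrac12\Bigr),
\]
i.e.\ the chord direction coincides with the tangent of $\gamma$ at the \emph{midpoint} of the corresponding arc. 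Since $\gamma'(s)=(-\sin 2\pi s,\cos 2\pi s,0,\dots,0)$ and $|2\pi s-\theta_{k+1/2}|\le\pi/n$ on that edge, and since $a\mapsto(-\sin a,\cos a,0,\dots,0)$ is $1$-Lipschitz, one gets $\|(T_np_n)'-\gamma'\|_{L^\infty}\le\pi/n\to 0$. Combining the two displays yields $\|T_np_n-\gamma\|_{W^{1,\infty}}\to 0$.

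The step I would treat most carefully is the \emph{alignment}, i.e.\ the choice of $T_n$: because the $W^{1,\infty}$-distance is measured at equal arc-length parameters $s$, the vertices of $T_np_n$ must be rotated and translated to sit at the angles $2\pi k/n$ matching $\gamma(k/n)$; with an arbitrary rotation the tangents would not align and the norm would not vanish. Once this normalization is imposed, the identity ``chord direction $=$ tangent at the arc midpoint'' reduces the entire derivative comparison to the trivial angular mismatch $\le\pi/n$, so no delicate estimation remains. Finally, to identify the limit as a genuine \emph{right circle} and not merely some limiting curve, I would invoke \cite{ACFGH} (equivalently, Corollary~\ref{minimizer1} together with the $\Gamma$-convergence of Theorem~\ref{gamma}), which characterizes the unique minimizer of $\mathcal{E}^{\alpha,q}$ in $\mathcal{C}(\mathcal{K})$ as the round circle.
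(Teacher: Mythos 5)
Your proof is correct and follows exactly the route the paper intends: the corollary is stated as an immediate consequence of Theorem~\ref{discrete--minimizer}, i.e.\ the minimizers are regular $n$-gons, and suitably normalized regular $n$-gons converge to the round circle of circumference $1$ in $W^{1,\infty}$. Your explicit normalization $T_n$, the circumradius computation $R_n=1/(2n\sin(\pi/n))\to 1/(2\pi)$, and the sum-to-product identification of the chord direction with the tangent at the arc midpoint correctly supply the details the paper leaves unwritten (the closing appeal to \cite{ACFGH} is unnecessary since your computation already exhibits the limit as the round circle, but it does no harm).
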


\end{document}